\newcommand{\calf}{\mathcal{F}}
\newcommand{\Le}{\mathbb{L}�_h}
\numberwithin{equation}{section}
\newtheorem{definition}{Definition}[section]
\newtheorem{lemma}[definition]{Lemma}
\newtheorem{theorem}[definition]{Theorem}
\newtheorem{proposition}[definition]{Proposition}
\newtheorem{remarkth}[definition]{Remark}
\newenvironment{remark}{\begin{remarkth}\upshape}{\hfill$\diamond$\end{remarkth}}
\renewcommand{\emph}[1]{{\bfseries\itshape{#1}}}
\newcommand{\R}{\mathbb{R}}      %Numeros reales
\newcommand{\F}{\mathbb{F}}
\newcommand{\T}{\mathbb{T}}
\newcommand{\ltilde}[3][0]{\altura=0 \advance\altura by #1
           \ancho=#2 \anchom=\ancho \divide\anchom by 2
           \anchoa=\ancho \divide\anchoa by 4
           \anchob=\anchom \advance\anchob by \anchoa
           \kern-3pt \begin{array}[b]{c}
           \begin{picture}(1,1)(\anchom,-\altura)
        \qbezier(0,2)(\anchoa,5)(\anchom,2)
        \qbezier(\anchom,2)(\anchob,-1)(\ancho,4)
        \qbezier(0,2)(\anchoa,4.5)(\anchom,1.8)
        \qbezier(\anchom,1.8)(\anchob,-1.5)(\ancho,4)
       \end{picture} \\[-4pt]{#3}
                       \end{array} \kern-4pt    }
\newcommand{\lhat}[3][0]{\altura=0 \advance\altura by #1
           \ancho=#2 \anchom=\ancho \divide\anchom by 2
           \anchoa=\ancho \divide\anchoa by 4
           \anchob=\anchom \advance\anchob by \anchoa
           \kern-3pt \begin{array}[b]{c}
           \begin{picture}(1,1)(\anchom,-\altura)
        \qbezier(0,2)(\anchoa,4)(\anchom,6)
        \qbezier(\anchom,6)(\anchob,4)(\ancho,2)
        \qbezier(0,2)(\anchoa,3.8)(\anchom,5.6)
        \qbezier(\anchom,5.6)(\anchob,3.8)(\ancho,2)
       \end{picture} \\[-4pt] {#3}
                       \end{array} \kern-4pt    }
\newcommand{\lcf}{\lbrack\! \lbrack}
\newcommand{\rcf}{\rbrack\! \rbrack}
\newcommand\map[3]{#1\ \colon\ #2\longrightarrow#3}
\newcommand{\lvec}[1]{\overleftarrow{#1}}
\newcommand{\rvec}[1]{\overrightarrow{#1}}
\newcommand{\qquand}{\qquad\text{and}\qquad}
\newcommand{\e}{\mathrm{e}}
\newcommand\prol{\@ifstar{\@proldf}{\@prolpf}}  %% if * dual else primal
\def\@prolpf{\@ifnextchar[{\@prolpf@wrt}{\@prolpf@}}
\def\@prolpf@wrt[#1]#2{\@ifnextchar[{\@prolpf@wrt@at{#1}{#2}}{\@prolpf@wrt@{#1}{#2}}}
\def\@prolpf@wrt@at#1#2[#3]{\prolsymbol^{#1}_{#3}#2}
\def\@prolpf@wrt@#1#2{\prolsymbol^{#1}#2}
\def\@prolpf@#1{\@ifnextchar[{\@prolpf@at{#1}}{\@prolpf@@{#1}}}
\def\@prolpf@at#1[#2]{\prolsymbol_{#2}#1}
\def\@prolpf@@#1{\prolsymbol#1}
\def\@proldf{\@ifnextchar[{\@proldf@wrt}{\@proldf@}}
\def\@proldf@wrt[#1]#2{\@ifnextchar[{\@proldf@wrt@at{#1}{#2}}{\@proldf@wrt@{#1}{#2}}}
\def\@proldf@wrt@at#1#2[#3]{\prolsymbol^{*#1}_{#3}#2}
\def\@proldf@wrt@#1#2{\prolsymbol^{*#1}#2}
\def\@proldf@#1{\@ifnextchar[{\@proldf@at{#1}}{\@proldf@@{#1}}}
\def\@proldf@at#1[#2]{\prolsymbol^*_{#2}#1}
\def\@proldf@@#1{\prolsymbol^*#1}
\def\prolsymbol{\mathcal{T}}
\def\lcf{\lbrack\! \lbrack}
\def\rcf{\rbrack\! \rbrack}
\newcommand{\pai}[2]{\langle #1, #2\rangle}
\newcommand{\cinfty}[1]{C^\infty(#1)}
\newcommand{\set}[2]{\left\{\,#1\left.\vphantom{#1#2}\,\right\vert\,#2\,
                \right\}}
\newcommand{\pd}[2]{\frac{\partial #1}{\partial #2}}
\newcommand{\at}[1]{\Big|_{#1}}
\newcommand{\vectorfields}[1]{\mathfrak{X}(#1)}
\newcommand{\sode}{{\textsc{sode}}}
\newcommand{\ode}{{\textsc{ode}}}
\newcommand{\Real}{\mathbb{R}}
\begin{document}
{\Large

\title[On the exact discrete Lagrangian function for variational integrators]{On the exact discrete Lagrangian function for variational integrators: theory and applications}

\author[J.C. Marrero]{J.C. Marrero}
\address{J.C. Marrero:
ULL-CSIC Geometr\'{\i}a Diferencial y Mec\'anica Geom\'etrica\\
Departamento de Matem\'aticas, Estad{\'\i}stica e IO, Secci\'on de
Ma\-te\-m\'a\-ti\-cas y F{\'\i}sica, Universidad de la Laguna, La Laguna,
Tenerife, Canary Islands, Spain} \email{jcmarrer@ull.edu.es}

\author[D. Mart\'{\i}n de Diego]{D. Mart\'{\i}n de Diego}
\address{D. Mart\'{\i}n de Diego:
Instituto de Ciencias Matem\'aticas (CSIC-UAM-UC3M-UCM) \\ C/Nicol\'as
Cabrera 13-15, 28049 Madrid, Spain} \email{david.martin@icmat.es}

\author[E. Mart\'{\i}nez]{E. Mart\'{\i}nez}
\address{E. Mart\'{\i}nez:
Departamento de Matem\'atica Aplicada e IUMA, Facultad de
Ciencias, Universidad de Zaragoza, 50009 Zaragoza, Spain}
\email{emf@unizar.es}

\thanks{{This work has been partially supported by grants MTM 2012-34478, MTM 2013-42
870-P, MTM 2015-64166-C2-2P (MINECO),  the European project IRSES-project ``Geomech-246981'' and the ICMAT Severo Ochoa projects SEV-2011-0087 and 
SEV-2015-0554 (MINECO). JCM acknowledges the partial support from IUMA (University of
Zaragoza) for a stay at the University of Zaragoza where this work was started}}

\keywords{ Exact discrete Lagrangian, variational integrators, Lie algebroids, Lie groupoids, second order differential equations, convexity theorems, variational error analysis, discrete Euler-Poincar\'e equations, discrete Lagrange-Poincar\'e equations}

\subjclass[2010]{ 17B66,
22A22, 70G45, 70Hxx.}

\begin{abstract}
In this paper, we will give a rigorous construction of the exact discrete Lagrangian formulation associated to a continuous Lagrangian problem. Moreover, we work in the  setting of Lie groupoids and Lie algebroids which is enough general to simultaneously cover several cases of interest in discrete and continuous descriptions as, for instance,  Euler-Lagrange equations, Euler-Poincar\'e equations, Lagrange-Poincar\'e equations...
The construction of an exact discrete Lagrangian is of considerable interest for the analysis of the error between an exact
trajectory and the discrete trajectory derived by a variational integrator.

\end{abstract}

\maketitle

\tableofcontents

\section{Introduction}

Classical integrators are mainly focused on solving ordinary differential equations (\ode s) on an euclidean space. In many cases of interest, without solving explicitly a given  \ode, we known  some important qualitative and geometric features of the  solutions of the equations of motion. For instance, preservation of the energy or other conservation laws, preservation of geometric structures like symplectic  forms, volume forms, Poisson structures... or even, preservation of the manifold  structure of the configuration space where the \ode\ evolves (see Marsden, Ratiu   \cite{MaRa}).

As a consequence,  in the last years,  the theme of structure preservation has emerged in numerical analysis with important ramifications in differential geometry, theoretical mechanics and engineering applications. 
The idea is to design numerical methods for a given \ode\ while preserving one or more of these geometric or qualitative  properties exactly. These methods are called geometric integrators  (see  Hairer et al  \cite{HLW}, McLachlan, Quispel \cite{MaQu} and reference therein). For example, if the \ode\ is defined on a Lie group it is possible to construct adapted geometric integrators 
(see Iserles et al \cite{IMNZ},  Celledoni et al \cite{CeMaOw}). Moreover, these preservation properties are inherent   in the case of Lagrangian and Hamiltonian systems  (see Marsden, West \cite{MaWe} or Sanz-Serna, Calvo \cite{SaCa}).

In our paper, we shall focus  on a particular family of geometric integrators, the  variational integrators as in Marsden, West \cite{MaWe} and Leok \cite{Leok}. This particular family of integrators is derived from  a discretization of the Hamilton's principle, that is, replacing the action integral associated to a continuous Lagrangian by a discrete action sum and  extremizing it over all the sequences of points with fixed end point conditions. Assuming a regularity condition, the derived numerical method is guaranteed to be symplectic or Poisson preserving and also,  under symmetry invariance, it is obtained preservation of the associated constants of the motion.

Moreover, we will study this problem using the more general and inclusive  perspective of Lie groupoids and Lie algebroids. For instance, in the seminal paper by Moser and Veselov \cite{mo-ve}, the authors study discrete mechanics for  Lagrangian systems of the form $L_d: G\rightarrow {\mathbb R}$  defined on a Lie group
$G$, and the discrete dynamical system is given by a diffeomorphism from $G$ to
itself. This  gives a discrete equivalent to the Euler-Poincar\'e equations. In this direction and following the program proposed by Weinstein \cite{We}, in Marrero et al \cite{MaMaMa} we have analyzed the geometrical framework for discrete Mechanics on
Lie groupoids covering simultaneously many of the possible cases of interest in mechanics. In that paper, we found intrinsic expressions for the discrete Euler-Lagrange
equations, and we have introduced the Poincar\'e-Cartan sections, the discrete
Legendre transformations and the discrete evolution operator in both the Lagrangian
and the Hamiltonian formalism. The notion of regularity has been completely
characterized and we have proven the symplecticity of the discrete evolution
operators.
Moreover, the general theory of discrete symmetry reduction directly follows from our
results.
Recently in Marrero et al \cite{MaMaMa2} we have derived   local expressions for the different
objects appearing in discrete Mechanics on Lie groupoids.

Obviously, the main application of discrete variational calculus is the construction of geometric integrators using different discretizations of the associated action sum for a continuous Lagrangian problem. 
For the analysis of the error it is crucial the construction of the exact discrete Lagrangian which exactly describes  the evolution of the continuous system.  
Marsden and West \cite{MaWe} considered this construction  for Lagrangians defined on the tangent bundle of a manifold and the  local
error analysis of variational integrators defined on $Q\times Q$ as the discrete space corresponding to the tangent bundle $TQ$. This result was later rigorously stablished by Patrick and Cuell \cite{PaCu} using variational arguments.

 In our paper, we derive this result using different arguments and admitting an extension to discrete Lagrangians defined on a general Lie groupoid. In particular, we use convexity theorems for explicit second order differential equations to construct the exact discrete Lagrangian associated to a continuous regular Lagrangian. We start with the case  of an explicit  second order differential equation defined on a configuration space $Q$. Geometrically, it is represented by a special vector field on the tangent bundle $TQ$ which is called a \sode. Roughly speaking it is possible to show  Hartman \cite{Ha} that given any two enough close  points we can find a unique solution of the second order differential equation joining both points (see Theorem  \ref{convexity1}). This  result allows us to introduce the notion of exponential map associated with a given \sode. In Theorem \ref{convexity2} we prove that this exponential map is a local diffeomorphism. We show in Theorem \ref{convexity-definitivo}  how to extend this result  to the more general case of integrable Lie algebroids, viewing  the tangent bundle as a particular case.  
With all these previous ingredients it is possible to define the exact discrete Lagrangian for a given regular Lagrangian $L: AG\rightarrow {\mathbb R}$, where $AG$ is the associated Lie algebroid to a Lie groupoid $G$ (see Marrero et al \cite{MaMaMa}), as follows
\[
\mathbb{L}_h^{e}(g) = \int_0^h L(\Phi^{\Gamma_L}_t(R^{e^-}_h(g)) dt, \;\; \mbox{ for } g \in U.
\]
Here, $g$ is defined on an appropriate open subset $U$ of $G$ near of the identities,  $\Phi^{\Gamma_L}_t$ is the flow at time $t$ associated to the \sode\ $\Gamma_L$,  which defines the corresponding Euler-Lagrange equations (see Section \ref{Con-Lag-Mech-Lie-alg}), and $R^{e^-}_h$ is the inverse of the exponential map of $\Gamma_L$. This definition agrees with the one given by Marsden and West \cite{MaWe} for the case of Lagrangians defined on tangent bundles.

Using $\mathbb{L}_h^{e}: U\subset G\rightarrow {\mathbb R}$ as a discrete Lagrangian, and following  Marrero et al  \cite{MaMaMa} (see  also Section \ref{discrete-mechanics}), we can construct the corresponding discrete Lagrangian evolution operator which defines the discrete Euler-Lagrange equations on a Lie groupoid and  the associated discrete Legendre transformations $\mathbb{F}^+\mathbb{L}^{e}_h$ and   $\mathbb{F}^-\mathbb{L}^{e}_h$.  With both Legendre transformations it is possible to define the discrete Hamiltonian evolution operator which, in the case of the exact discrete Lagrangian, is just the flow at fixed time of  the Hamiltonian vector field associated with the Lagrangian $L$ (see Theorem \ref{exact-discrete-hamiltonian-flow}).

Now rather than considering how closely the trajectory of an arbitrary numerical method  matches the exact trajectory given in our case by  $\Phi^{\Gamma_L}_t$,
we can alternatively study how closely a discrete Lagrangian matches the exact discrete Lagrangian $\mathbb{L}_h^{e}$. That is, we study the variational error analysis (see Marsen, West   
\cite{MaWe}). In Theorem \ref{theorem-error} we show that if the we take as a discrete Lagrangian an approximation of order $r$ of the exact discrete Lagrangian then the associated  discrete evolution operator is also of order $r$, that is, the derived  discrete scheme is an approximation of  the continuous flow of order $r$. 

As we mentioned before, in the particular case when $G$ is the pair groupoid $Q \times Q$, the previous results are well-known (see Marsden, West \cite{MaWe} and Patrick, Cuell \cite{PaCu}). However, the scope of our results is wider and, in fact, we apply them to two interesting problems: i) the discretization of Euler-Poincar\'e equations for continuous Lagrangians which are defined on the Lie algebra of a Lie group and ii) the discretization of the Lagrange-Poincar\'e equations for continuous Lagrangians defined on the Atiyah algebroid associated with a trivial principal bundle. 
Additionally, we compare our results with other approximations as in Bogfjellmo, Marthinsen \cite{BoMa}, Bou-Rabee, Marsden \cite{Rabee}, Leok, Shingel \cite{LeSh}, Marsden et al \cite{MaPeSh} among others.

The paper is structured as follows. In Section \ref{sec2}, we review some constructions on continuous and discrete Lagrangian mechanics on Lie algebroids and Lie groupoids, respectively. In Section \ref{section2}, we prove some convexity theorems for standard second order differential equations on smooth manifolds and, in Section \ref{sec4}, we extend these results to the Lie algebroid setting. In Section \ref{sec5}, we introduce the exact discrete Lagrangian function associated with a regular continuous Lagrangian function on a Lie algebroid, we discuss the variational error analysis and we apply the corresponding results to several interesting examples. The paper ends with two appendices which contain the proof of Theorem \ref{convexity1} and some basis constructions on Lie algebroids and groupoids.

\section{Continuous and discrete Lagrangian Mechanics on Lie algebroids and groupoids}\label{sec2}

\subsection{Continuous Lagrangian Mechanics on Lie algebroids}\label{Con-Lag-Mech-Lie-alg}

In this section, we will present a brief description of Lagrangian Mechanics on Lie algebroids (for more details, see \cite{CoLeMaMaMa,LeMaMa,Ma0,We}). 

Let $\tau: A \to M$ be a vector bundle over a manifold $M$ endowed with a Lie algebroid structure:
\[
\lcf \cdot, \cdot \rcf: \Gamma(A) \times \Gamma(A) \to \Gamma(A), \; \; \rho: A \to TM.
\]
This means that $\lcf \cdot, \cdot \rcf $ defines a Lie algebra structure on the space of sections $\Gamma(A)$ of $A$ and that $\rho: A \to TM$ is a bundle map, the anchor map, satisfying the condition
\[
\lcf X, fY \rcf = f \lcf X, Y\rcf + \rho(X)(f) Y
\]
for $X, Y \in \Gamma(A)$ and $f \in C^{\infty}(M)$ (see \cite{Mac}).

Suppose that $(x^{i})$ are local coordinates on $M$ and that $\{e_{\alpha}\}$ is a local basis of sections of $A$ such that
\[
\lcf e_{\alpha}, e_{\beta} \rcf = C_{\alpha \beta}^{\gamma}e_{\gamma}, \; \; \rho (e_{\alpha}) = \rho^{i}_{\alpha} \frac{\partial}{\partial x^{i}}.
\]
$C_{\alpha \beta}^{\gamma}$ and $\rho^{i}_{\alpha}$ are the local structure functions of $A$ for the local coordinates $(x^{i})$ and the basis $\{e_{\alpha}\}$.

Moreover, we will denote by $(x^{i}, y^{\alpha})$ (resp., $(x^{i}, y_{\alpha})$) the corresponding local coordinates on $A$ (resp., on the dual bundle $A^*$ to $A$).

On the dual bundle $A^*$ to $A$ one may define a linear Poisson bracket $\{\cdot, \cdot\}$ which is characterized by the following conditions
\[
\{\hat{X}, \hat{Y}\} = -\widehat{\lcf X, Y\rcf}, \; \; \{\hat{X}, g\circ \tau^*\} = - \rho(X)(g) \circ \tau^*, \; \; \{f \circ \tau^*, g \circ \tau^*\} = 0,
\]
for $X, Y \in \Gamma(A)$ and $f, g \in C^{\infty}(M)$. Here, $\tau^*: A^* \to M$ is the vector bundle projection and if $Z \in \Gamma(A)$ then $\hat{Z}: A^* \to \mathbb{R}$ is the fiberwise linear function on $A^*$ given by
\[
\hat{Z}(\mu) = <\mu, Z(\tau^*(\mu))>, \; \; \mbox{ for } \mu \in A^*.
\]
Note that
\[
\{y_{\alpha}, y_{\beta}\} = -C_{\alpha \beta}^{\gamma}y_{\gamma}, \; \; \{y_{\alpha}, x^{i}\} = -\rho^{i}_{\alpha}, \; \; \{x^{i}, x^{j}\} = 0,
\]
(for more details, see \cite{CoDaWe,Co}).

Now, let $L: A \to \mathbb{R}$ be a Lagrangian function on $A$. 

Then, one may introduce the Legendre transformation associated with $L$
\[
{\mathcal F}{L}: A \to A^*, \; \; \; a \in A \to {\mathcal F}{L}(a)(a') = \frac{d}{dt}\Big|_{t=0} L(a + ta').
\]
The local expression of ${\mathcal F}_{L}$ is
\[
{\mathcal F}{L}(x^{i}, y^{\alpha}) = (x^{i}, \frac{\partial L}{\partial y^{\alpha}}).
\]
The Lagrangian function $L$ is said to be (hyper)-regular if ${\mathcal F}L$ is a (global) local diffeomorphism.
In the case of hyper-regular Lagrangians, one may consider the Hamiltonian function $H: A^*\rightarrow {\mathbb R}$
\[
H = E_L \circ {\mathcal F}{L}^{-1}.
\]
Here, $E_L$ is the Lagrangian energy given by
\[
E_L = \Delta(L) - L,
\]
where $\Delta$ is the Liouville vector field on $A$. The local expression of $E_L$ is
\[
E_L = y^{\alpha} \frac{\partial L}{\partial y^{\alpha}} - L.
\]
The Hamiltonian vector field $X_H$ on $A^*$, with Hamiltonian function $H: A^*\rightarrow {\mathbb R}$, is given as follows
\[
X_H(F) = \{F, H\}, \; \; \; \mbox{ for } F\in C^{\infty}(A^*).
\]
$X_H$ induces a second order differential equation (\sode) on $A$, which we denote by $\Gamma_{L} \in {\frak X}(A)$ by
\[
(T_a {\mathcal F}{L})(\Gamma_L(a)) = X_H(Leg_L(a)), \; \;  \hbox{for  } a \in A
\]
 (see \cite{LeMaMa}). The second order condition means that the integral curves of $\Gamma_L$ are admissible, that is, if $c: I \to A$ is an integral curve of $\Gamma_L$ then
\[
\rho \circ c = \frac{d}{dt}(\tau \circ c).
\]
In general a \sode\ $\Gamma$ on $A$ is defined as a vector field $\Gamma \in {\mathfrak X}(A)$ such that $T\tau\circ \Gamma=\rho$. In local coordinates, their integral curves
\[
c: I \to A, \; \; \; t \in I \to c(t) = (x^{i}(t), y^{\alpha}(t))
\]
 satisfy the following system of differential equations: 
\[
\displaystyle \frac{dx^{i}}{dt} = \rho^{i}_{\alpha}y^{\alpha}, \quad \frac{dy^{\alpha}}{dt} = \Gamma^{\alpha}(x, y)\; .
\]
where $\Gamma=\displaystyle \rho^i_{\alpha}y^{\alpha}\frac{\partial}{\partial x^i}+\Gamma^{\alpha}(x, y)\frac{\partial}{\partial y^{\alpha}}$.

In the case of $\Gamma_L$,  its  integral curves  are just the solutions of the Euler-Lagrange equations for $L$. In fact, a curve $c$ in $A$
is an integral curve of $\Gamma_L$ if and only if
\begin{equation}\label{Euler-Lagrange-Lie-algebroid}
\displaystyle \frac{dx^{i}}{dt} = \rho^{i}_{\alpha}y^{\alpha}, \; \; \; \frac{d}{dt}\left(\frac{\partial L}{\partial y^{\alpha}}\right) = \rho^{i}_{\alpha} \frac{\partial L}{\partial x^{i}} - C_{\alpha \beta}^{\gamma}y^{\beta}\frac{\partial L}{\partial y^{\gamma}},
\end{equation}
(for more details, see \cite{LeMaMa,Ma0}).

\medskip

\noindent {\bf Some particular cases:} \cite{CoLeMaMaMa,LeMaMa,Ma0,We}

i) If $A$ is the standard Lie algebroid $\tau_{TQ}: TQ \to Q$ then Eqs (\ref{Euler-Lagrange-Lie-algebroid}) are just the Hamel equations (the standard Euler-Lagrange equations in quasi-velocities) for an standard Lagrangian function $L: TQ \to \mathbb{R}$.

ii) If $A$ is an involutive vector subbundle of $TQ$ (for instance, the vertical bundle of a fibration) then a real function $L$ on $A$ induces a Lagrangian system subjected to holonomic constraints and (\ref{Euler-Lagrange-Lie-algebroid}) are just the holonomic equations for $L$.  

iii) If $A$ is a Lie algebra ${\frak g}$ (as a Lie algebroid over a single point) then Eqs (\ref{Euler-Lagrange-Lie-algebroid}) are the Euler-Poincar\'e equations for a Lagrangian function on the Lie algebra ${\frak g}$.

iv) If $A$ is the Atiyah algebroid $TQ/G$ associated with a free and proper action of a Lie group $G$ on $Q$ then Eqs (\ref{Euler-Lagrange-Lie-algebroid}) are the Lagrange-Poincar\'e equations for a $G$-invariant Lagrangian function. 

\subsection{Discrete Lagrangian Mechanics on Lie groupoids}\label{discrete-mechanics}

In this section,we will present a brief description of discrete Lagrangian Mechanics on Lie groupoids (for more details, see \cite{MaMaMa,We}).

\medskip

\noindent {\bf Discrete Euler-Lagrange equations}

Let $G$ be  a Lie groupoid with structural maps
\[
\alpha, \beta: G \to M, \; \; \varepsilon: M \to G, \; \; i: G \to G,
\; \; m: G_{2} \to G.
\]
Denote by $\tau:AG\to M$ the Lie algebroid of $G$ (for the definition of a Lie groupoid and its associated  Lie algebroid, see Appendix \ref{algebroide-grupoide}).

 A discrete Lagrangian is a function $\map{L_d}{G}{\Real}$. Fixed $g\in
G$, we define the set of admissible sequences with values in $G$:
\[
\begin{array}{rcl}
{\mathcal C}^N_{g}=\{(g_1, \ldots, g_N)\in G^N\; \mid \; (g_k,
g_{k+1})\in G_2 \hbox{ for all } k=1,\ldots, N-1 \\ \hbox{ and } g_1
\ldots g_N=g  \}.
\end{array}
\]
An admissible sequence $(g_{1}, \dots , g_{N}) \in {\mathcal
C}^N_g$  is a solution of the discrete Euler-Lagrange
equations if
\[
0=\sum_{k=1}^{N-1}\left[\lvec{X}_k\big({g_k})(L_d)-\rvec{X}_k\big({g_{k+1}})(L_d)
\right], \; \; \mbox{ for } X_{1}, \dots , X_{N-1} \in
\Gamma(AG).
\]
For $N=2$ we obtain that $(g, h)\in G_2$ is a solution if
\begin{equation}\label{Eqs-Euler-Lag-discretas}
\lvec{X}({g})(L_d)-\rvec{X}({h})(L_d)=0
\end{equation}
for every section $X$ of $AG$. Here, $\lvec{X}$ (resp., $\rvec{X}$) is the left-invariant (resp., right-invariant) vector field on $G$ induced by $X$ (see Appendix \ref{algebroide-grupoide}). 

In the particular case when $G \rightrightarrows M$ is the pair groupoid, the Lie groupoid associated with a fibration, a Lie group or the Atiyah groupoid associated with a free and proper action (see Appendix \ref{algebroide-grupoide}), then Eqs. (\ref{Eqs-Euler-Lag-discretas}) are just the discrete versions of the dynamical equations which we have mentioned at the end of Section \ref{Con-Lag-Mech-Lie-alg}.

\medskip

\noindent {\bf Discrete Lagrangian evolution operator}

A smooth map  $F_{L_{d}}: G\longrightarrow G$ is said to be 
a discrete flow or a discrete Lagrangian evolution
operator for a discrete Lagrangian function $L_{d}: G \to \mathbb{R}$ if it satisfies the following properties:
\begin{enumerate}
\item[-] $\hbox{graph}(F_{L_{d}})\subseteq G_2$, that is, $(g, F_{L_{d}}(g))\in G_2$, $\forall g\in
G$.
\item[-] $(g, F_{L_{d}}(g))$ is a solution of the discrete Euler-Lagrange
equations, for all $g\in G$, that is,
\begin{equation}\label{5.22'}
\lvec{X}(g)(L_d)-\rvec{X}(F_{L_{d}}(g))(L_d)=0
\end{equation}
for every section $X$ of $AG$ and every $g\in G.$
\end{enumerate}

\medskip

\noindent{\bf Discrete Legendre transformations}
\label{section5.6} 

Given a discrete La\-gran\-gian
$\map{L_{d}}{G}{\Real}$ we define two discrete Legendre
transformations $\F^{-}L_{d}: G\longrightarrow A^*G$ and
$\F^{+}L_{d}: G\longrightarrow A^*G$
 as follows 
\begin{equation}\label{DLt-}
(\F^{-}L_{d})(h)(v_{\varepsilon(\alpha(h))})=-v_{\varepsilon(\alpha(h))}(L_d\circ
r_h\circ i), \mbox{ for } v_{\varepsilon(\alpha(h))}\in
A_{\alpha(h)}G,
\end{equation}
\begin{equation}\label{DLt+}
(\F^{+}L_d)(g)(v_{\varepsilon(\beta(g))})=
v_{\varepsilon(\beta(g))}(L_d\circ l_g), \mbox{ for }
v_{\varepsilon(\beta(g))}\in A_{\beta(g)}G.
\end{equation}
Here, $l_g: \alpha^{-1}(\beta(g)) \to \alpha^{-1}(\alpha(g))$ (resp., $r_h: \beta^{-1}(\alpha(h)) \to \beta^{-1}(\beta(h))$ is the left-translation by $g \in G$ (resp., the right-translation by $h$) (see Appendix \ref{algebroide-grupoide}).  
\begin{remark}\label{r4.4'}
Note that  $(\F^{+}L_d)(g)\in A^*_{\beta(g)}G$ and $(\F^{-}L_d)(h)\in
A^*_{\alpha(h)}G$. Furthermore, if $\{X_\gamma\}$ (respectively,
$\{Y_{\mu}\}$) is a local basis of $\Gamma(AG)$ in an open subset $U$ (resp., $V$) such that
$\alpha(h) \in U$ (respectively, $\beta(g) \in V$) and
$\{X^\gamma\}$ (respectively, $\{Y^\mu\}$) is the dual basis of
$\Gamma(A^*G),$ it follows that
\[
\F^-L_{d}(h)=\rvec{X}_\gamma(h)(L_d)X^\gamma(\alpha(h)),\;\;\;
\F^+L_{d}(g)=\lvec{Y}_\mu(g)(L_d)Y^\mu(\beta(g)).
\]
\end{remark}

\medskip

\noindent{\bf Discrete regular Lagrangians}

A Lagrangian $L_d: G\to \Real$ on a Lie groupoid $G$ is said to be
regular if the Legendre transformation $\F^-L_d$ is a local
diffeomorphism (or, equivalently, the Legendre transformation $\F^+L_d$ is a local
diffeomorphism).

One may prove that the Lagrangian $L_{d}$ is
regular if and only if for every $g\in G$ and every local basis
$\{X_\gamma\}$ (respectively, $\{Y_\mu\}$) of $\Gamma(AG)$ on an
open subset $U$ (respectively, $V$) of $M$ such that $\alpha(g)\in
U$ (respectively, $\beta(g)\in V$) we have that the matrix
$\rvec{X_\gamma}(\lvec{Y_\mu}(L_d))$ is regular on
$\alpha^{-1}(U)\cap \beta^{-1}(V)$.

Moreover, if $L_d:G\to \Real$ is regular and $(g_0,h_0)\in G_{2}$
is a solution of the discrete Euler-Lagrange equations for $L_d$
then there exist two open subsets $U_0$ and $V_0$ of $G$, with
$g_0\in U_0$ and $h_0\in V_0,$ and there exists a (local) discrete
Lagrangian evolution operator $F_{L_{d}}:U_0\to V_0$ such that:
\begin{itemize}
\item $F_{L_{d}}(g_0)=h_0,$

\item $F_{L_{d}}$ is a diffeomorphism and

\item $F_{L_{d}}$ is unique, that is, if $U_0'$ is an open subset of
$G$, with $g_0\in U_0'$ and $F_{L_{d}}':U'_0\to G$ is a (local)
discrete Lagrangian evolution operator then $
F_{L_{d}}'|_{U_0\cap U_0'}=F_{L_{d}}|_{U_0\cap U_0'}$.
\end{itemize}
In fact, the discrete flow $F_{L_{d}}$ is given by
\begin{equation}\label{dis-Lag-evol-operator}
F_{L_{d}} = (\F^{-}L_d)^{-1} \circ \F^{+}L_d,
\end{equation}
where the composition of the maps $(\F^{-}L_d)^{-1}$ and $\F^{+}L_d$ makes sense (for more details, see \cite{MaMaMa}).

\medskip

\noindent {\bf Discrete Hamiltonian evolution operator}

Let $L_d: G\to \R$ be a regular discrete Lagrangian and assume, without 
loss of generality, that the Legendre transformations $\F^+L$ and
$\F^-L$ are global diffeomorphisms. Then, one may introduce the
discrete Hamiltonian evolution operator $\tilde{F}_{L_d}: A^*G\to A^*G$, by
\begin{equation}\label{dheo}
\tilde{F}_{L_{d}}=\F^+L_d\circ F_{L_{d}} \circ (\F^+L_d)^{-1},
\end{equation}
where $F_{L_{d}}$ is the discrete Lagrangian evolution operator.

Note that, from (\ref{dis-Lag-evol-operator}), we have  the following alternative
definition
\[
\tilde{F}_{L_{d}}=\F^+L_d\circ (\F^-L_d)^{-1}
\]
of the discrete Hamiltonian evolution operator. 

Moreover, one may prove that $\tilde{F}_{L_{d}}$ is a Poisson isomorphism for the canonical
Poisson bracket on $A^*G$ \cite{MaMaMa}.

\section{Convexity theorems for standard second order differential
equations}\label{section2}

Let $Q$ be a smooth manifold and $TQ$  its tangent bundle. We
will denote by $\tau_{TQ}: TQ \to Q$ the canonical projection.

A standard \sode\ on $Q$ is a \sode\ on the standard Lie algebroid $\tau_{TQ}: TQ \to Q$. The solutions of a \sode\ on $TQ$ are the solutions of a system of explicit second order differential equations. That is, if we take  canonical coordinates $(x^i, \dot{x}^i)$ on $TQ$ then a \sode\ $\Gamma$   is locally written as follows: 
\[
\Gamma=\dot{x}^i\frac{\partial}{\partial x^i}+\Gamma^i(x, \dot{x})\frac{\partial}{\partial \dot{x}^i}\; ,
\]
and its solutions are the curves $t\rightarrow (x^i(t))$ such that  
\[
\frac{d^2 x^i}{dt^2}(t)=\Gamma^i(x(t), \frac{dx}{dt}(t))\; .
\] 
Therefore, a first convexity
theorem for a \sode\   may be deduced using the theory of explicit second order differential equations. 

%A vector field $\Gamma$ on $TQ$ is said to be \emph{a second order
%differential equation (SODE)} if
%\[
%(T_{v}\tau_{TQ})(\Gamma(v)) = v,
%\]
%for all $v\in TQ$ (see, for instance, \cite{LeRo}).

%One may prove that $\Gamma$ is a SODE if and only if
%\[
%\gamma_{v} = \dot{\sigma}_{v}, \; \; \mbox{ for all } v \in TQ
%\]
%where $\gamma_{v}: I_{v} \to TQ$ is the maximal integral curve of
%$\Gamma$ such that $\gamma_{v}(0) = v$ and $\sigma_{v} = \tau_{TQ}
%\circ \gamma_{v}: I_{v} \to Q$.
%
%The curve $\sigma_{v}: I_{v} \to Q$ is \emph{a trajectory of
%$\Gamma$}.
%
%It is clear that if $v \in T_{q}Q$, with $q\in Q$, then there
%exists a unique maximal trajectory $\sigma_{v}: I_{v} \to Q$ such
%that
%\[
%\sigma_{v}(0) = q, \; \; \; \dot{\sigma}_{v}(0) = v.
%\]
%Moreover, we have the following result.
\begin{theorem}\label{convexity1}
Let $\Gamma$ be a \sode\ in $Q$ and $q_{0}$ be a point of $Q$. Then,
one may find a sufficiently small positive number $h_{0}$, and two
open subsets $U$ and $\tilde{U}$ of $Q$, with $q_{0} \in U
\subseteq \tilde{U}$, such that for all $q_{1} \in U$ there exists
a unique trajectory of $\Gamma$
\[
\sigma_{q_0q_1}: [0, h_{0}] \to \tilde{U} \subseteq Q
\]
satisfying
\[
\sigma_{q_0q_1}(0) = q_0, \; \; \; \sigma_{q_0q_1}(h_0) = q_1.
\]
\end{theorem}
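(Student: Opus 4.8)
The plan is to reduce the two–point boundary value problem to a single application of the inverse function theorem, after rescaling time so that the map ``initial velocity $\mapsto$ endpoint'' becomes nondegenerate in the limit $h\to 0$. The obstruction to applying the inverse function theorem naively is that, for small $h$, moving from $q_0$ to a nearby $q_1$ in time $h$ requires an initial velocity of order $|q_1-q_0|/h$, so the endpoint map has derivative of order $h$ in the velocity; dividing out this factor of $h$ is the whole point of the rescaling.

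Concretely, I would work in a coordinate chart around $q_0$, identifying a neighbourhood of $q_0$ with an open subset of $\R^{n}$ ($n=\dim Q$) and $q_0$ with the origin, so that the integral curves of $\Gamma$ satisfy $\ddot x^{i}=\Gamma^{i}(x,\dot x)$ with $\Gamma^{i}$ smooth. For $v\in\R^{n}$ let $(x(t;v),\dot x(t;v))$ denote the integral curve of $\Gamma$ in $TQ$ with $x(0;v)=0$ and $\dot x(0;v)=v$; by the standard theory this is smooth in $(t,v)$ on a fixed interval $[0,h_{1}]$ and a fixed ball. Rescaling time by $t=hs$, $s\in[0,1]$, and setting $y(s)=x(hs;v)$, $u(s)=\dot x(hs;v)$, the pair $(y,u)$ solves $y'=h\,u$, $u'=h\,\Gamma(y,u)$ with $y(0)=0$, $u(0)=v$; these equations depend smoothly on the parameter $h$ down to $h=0$, where the solution is the constant $(0,v)$. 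The identity $y'=h\,u$ then gives the exact formula $y(1)=h\int_0^1 u(s)\,ds$, so that
\[
\Phi(v,h):=\int_0^1 u(s;v,h)\,ds
\]
is smooth in $(v,h)$ near $(0,0)$ (no division by $h$ is ever performed), satisfies $\Phi(v,0)=v$, and obeys $x(h;v)=h\,\Phi(v,h)$. In particular $D_{v}\Phi(0,0)=\mathrm{Id}$.

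By the inverse function theorem with parameter, there exist $h_{0}>0$ and $\delta,\varepsilon>0$ such that $v\mapsto\Phi(v,h_{0})$ is a diffeomorphism from the ball $B(0,\delta)$ onto an open set containing $B(0,\varepsilon)$. I would then set $U:=B(0,h_{0}\varepsilon)$: for each $q_{1}\in U$ the point $q_{1}/h_{0}$ lies in $B(0,\varepsilon)$, so there is a unique $v\in B(0,\delta)$ with $\Phi(v,h_{0})=q_{1}/h_{0}$, equivalently $x(h_{0};v)=q_{1}$. The curve $\sigma_{q_0q_1}(t):=x(t;v)$ is then the desired trajectory with $\sigma_{q_0q_1}(0)=q_{0}$ and $\sigma_{q_0q_1}(h_{0})=q_{1}$; since $v$ and $h_{0}$ are small, a continuity estimate shows all these trajectories stay in a fixed small neighbourhood $\tilde U$ of $q_{0}$, which provides the second open set.

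The step I expect to be the genuine obstacle is \emph{uniqueness}, namely ruling out a second trajectory from $q_{0}$ to $q_{1}$ confined to $\tilde U$ whose initial velocity lies outside $B(0,\delta)$, where $\Phi(\cdot,h_{0})$ need no longer be injective. This requires an a priori estimate: one must show that a trajectory confined to the small compact set $\overline{\tilde U}$ over the fixed time $[0,h_{0}]$ cannot have large initial speed, since such a curve would have to travel a distance of order $|\dot\sigma(0)|\,t$ before the (locally bounded) acceleration could turn it around, forcing it to leave $\tilde U$. Shrinking $\tilde U$ and $h_{0}$ then confines $\dot\sigma(0)$ to $B(0,\delta)$, after which injectivity of $\Phi(\cdot,h_{0})$ yields uniqueness. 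This confinement estimate is exactly the nontrivial core of Hartman's convexity argument \cite{Ha}; the remainder is the rescaling device above combined with the inverse function theorem.
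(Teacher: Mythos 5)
Your existence argument is correct and takes a genuinely different route from the paper's. The paper (Appendix \ref{Hartmann}) passes to a chart and then invokes Hartman's two-point boundary value theorem (Corollary 4.1 of Chapter XII in \cite{Ha}, quoted as Theorem \ref{Hartman}); the smallness conditions $\theta_1h_0^2/8+\theta_2h_0/2<1$ appearing there are contraction constants for a Picard-type iteration on the integral equation of the boundary value problem, so the paper's proof is ultimately a fixed-point argument on a function space. Your time rescaling $t=hs$ followed by the finite-dimensional inverse function theorem replaces that entirely; it is self-contained modulo smooth dependence on initial conditions, it is closer in spirit to Patrick and Cuell \cite{PaCu}, and it yields more: nonsingularity of the endpoint map $v\mapsto x(h_0;v)$ is exactly the local-diffeomorphism property of $exp^{\Gamma}_{(h_0,q_0)}$ that the paper extracts separately afterwards (Lemma \ref{non-singular} and Theorem \ref{convexity2}). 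So for the existence half, your proof is a clean alternative.

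The gap is in the uniqueness half, and as you have set it up it cannot be repaired. Your proposed a priori estimate --- a trajectory confined to $\overline{\tilde U}$ on $[0,h_0]$ cannot have large initial speed because the ``locally bounded'' acceleration cannot turn it around in time --- is false for a general \sode: $\Gamma^i(x,\dot x)$ is bounded on compact subsets of $TQ$, but a position-confined curve of large velocity does not stay in any fixed compact subset of $TQ$, and the acceleration may grow with the velocity (for a geodesic spray it is already quadratic in $\dot x$). A concrete counterexample: on $Q=\R^2$ take
\[
\ddot{\mathbf{x}}=\psi\bigl(|\dot{\mathbf{x}}|^2\bigr)\,J\dot{\mathbf{x}},
\]
with $J$ the rotation by $\pi/2$ and $\psi$ smooth, vanishing near $0$ and equal to $s^{3/4}$ for $s\geq 1$. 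Solutions have constant speed $v_0$, and for $v_0\geq 1$ they are circles of radius $v_0^{-1/2}$ traversed with period $2\pi v_0^{-3/2}$. Given any $h_0>0$ and any neighbourhoods $U\subseteq\tilde U$ of $q_0=0$, the choice $v_0=(2\pi/h_0)^{2/3}$ produces a whole circle's worth of trajectories starting and ending at $q_1=q_0$ at time exactly $h_0$, confined to a ball of radius $(h_0/2\pi)^{1/3}$; for $h_0$ small these all lie in $\tilde U$, so uniqueness in the position-confined class fails for every admissible choice of $h_0$, $U$, $\tilde U$.

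For the same reason, your closing claim that this confinement estimate ``is exactly the nontrivial core of Hartman's convexity argument'' is a mischaracterization, and deferring to \cite{Ha} does not close the gap: Hartman's Corollary 4.1 asserts uniqueness only among solutions staying in the region $\|q\|\le R_0$, $\|\dot q\|\le R_1$, i.e., confined in velocity as well as in position, and that is also all that the paper's own proof of Theorem \ref{convexity1} establishes. The uniqueness assertion must be read with this velocity localization, which the paper makes explicit later in Theorem \ref{convexity2} through the requirement $\dot\sigma_{qq'}(0)\in{\mathcal U}$. The right repair of your proof is therefore not to seek the a priori bound, but to weaken the claim: your inverse-function-theorem argument already gives uniqueness among trajectories whose initial velocity lies in $B(0,\delta)$, which is the velocity-localized statement that the paper actually proves and that the rest of the paper actually uses.
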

\begin{proof} A proof of this result may be found in Appendix \ref{Hartmann}.
\end{proof}

\begin{remark}{\rm Note that if we have a positive number $h_{0} >
0$ as in Theorem \ref{convexity1} then for every $h$, $0< h \leq
h_{0}$, there exist two open subsets $U_{h}$ and $\tilde{U}_{h}$
of $Q$, with $q_{0} \in U_{h} \subseteq \tilde{U}_{h}$, such that
for all $q_{1} \in U_{h}$ there is a unique trajectory of
$\Gamma$, $\sigma_{q_0q_1}: [0, h] \to \tilde{U}_{h} \subseteq Q$,
satisfying
\[
\sigma_{q_0q_1}(0) = q_0, \; \; \; \sigma_{q_0q_1}(h) = q_{1}
\]
(see the proof of Theorem \ref{convexity1} in Appendix \ref{Hartmann}).}
\end{remark}
Let $\Gamma$ be a \sode\ on $Q$.

We will denote by $\Phi^{\Gamma}$ the flow of $\Gamma$
\[
\Phi^{\Gamma}: D^{\Gamma}\subseteq \mathbb{R} \times TQ \to TQ.
\]
Here, $D^{\Gamma}$ is the open subset of $\mathbb{R} \times TQ$
given by
\[
D^{\Gamma} = \{(t, v) \in \mathbb{R} \times TQ \mid \Phi^{\Gamma}(\cdot, v) \mbox
{ is defined at least in } [0, t] \}.
\]
Now, if $q_{0}$ is a point of $Q$ and $h_{0} \geq 0$, we may
consider the open subset $D^{\Gamma}_{(h_{0}, q_{0})}$ of
$T_{q_{0}}Q$ given by
\[
D^{\Gamma}_{(h_{0},q_{0})} = \{v \in T_{q_{0}}Q \mid (h_{0}, v) \in
D^{\Gamma} \}.
\]
Note that if $h_{0} > 0$ is sufficiently small then it is clear
that $D^{\Gamma}_{(h_{0},q_{0})} \neq \emptyset $. Moreover, we
may introduce the exponential map associated with $\Gamma$
at $q_{0}$ for the time $h_{0}$ as follows
\[
exp^{\Gamma}_{(h_{0}, q_{0})}(v) = (\tau_{Q} \circ
\Phi^{\Gamma})(h_{0}, v), \; \; \mbox{ for } v\in
D^{\Gamma}_{(h_{0}, q_{0})}.
\]
We remark that the map $exp^{\Gamma}_{(0, q_{0})}$ is constant.
However, if we choose the positive number $h_{0}$ as in Theorem
\ref{convexity1}, we have that the map
\[
exp^{\Gamma}_{(h_{0}, q_{0})}: {\mathcal U} \subseteq
D^{\Gamma}_{(h_{0}, q_{0})} \to U \subseteq Q
\]
is a diffeomorphism, where ${\mathcal U}$ is the open subset of
$D^{\Gamma}_{(h_{0}, q_{0})} \subseteq T_{q_{0}}Q$ defined by
\[
{\mathcal U} = \{\dot{\sigma}_{q_0q_1}(0)\in T_{q_0}Q \mid q_{1} \in
U \}.
\]
In other words, the map $exp^{\Gamma}_{(h_{0}, q_{0})}:
D^{\Gamma}_{(h_{0},q_{0})}\subseteq T_{q_0}Q \to Q$ is
non-singular at the point $\dot{\sigma}_{q_0q_0}(0) = v_{q_{0}}
\in D^{\Gamma}_{(h_{0}, q_{0})}$.

Next, if $h_{0} \geq 0$ is sufficiently small, we will consider
the open subset $D^{\Gamma}_{h_{0}}$ of $TQ$ given by
\[
D^{\Gamma}_{h_{0}} = \{v \in TQ \mid (h_{0}, v) \in D^{\Gamma} \}.
\]
Note that
\[
v \in D^{\Gamma}_{h_{0}} \Longrightarrow D^{\Gamma}_{(h_{0},
\tau_{Q}(v))} = D^{\Gamma}_{h_{0}} \cap T_{\tau_{Q}(v)}Q \subseteq
D^{\Gamma}_{h_{0}}.
\]
Thus, since $\tau_Q: TQ \to Q$ is an open map, it follows that $\tau_Q(D^{\Gamma}_{h_0})$ is an open subset 
of $Q$ and
\[
D^{\Gamma}_{h_0} = \bigcup_{q\in \tau_Q(D^{\Gamma}_{h_0})} D^{\Gamma}_{(h_0, q)}.
\]
In addition, we may define the smooth map $exp_{h_{0}}^{\Gamma}:
D^{\Gamma}_{h_{0}} \subseteq TQ \to Q \times Q$ as follows
\[
exp^{\Gamma}_{h_{0}}(v) = (\tau_{Q}(v), exp^{\Gamma}_{(h_{0},
\tau_{Q}(v))}(v)), \; \; \mbox{ for } v \in D^{\Gamma}_{h_{0}}.
\]
Moreover, we deduce that
\begin{lemma}\label{non-singular}
Let $v$ be an element of $D^{\Gamma}_{h_{0}}$ such that
$exp^{\Gamma}_{(h_{0}, \tau_{Q}(v))}$ is non-singular at $v$.
Then, $exp^{\Gamma}_{h_{0}}$  is also non-singular at $v$.
\end{lemma}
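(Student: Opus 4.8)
The plan is to show directly that the differential $T_v(exp^{\Gamma}_{h_0})$ is a linear isomorphism. Since $\dim(TQ) = 2\dim Q = \dim(Q \times Q)$, the domain and codomain of this differential have the same dimension, so it suffices to establish injectivity of $T_v(exp^{\Gamma}_{h_0})$ and then conclude by a dimension count. The whole argument is structural and rests on the block-triangular way in which $exp^{\Gamma}_{h_0}$ is built out of $\tau_Q$ and the fiberwise exponential map $exp^{\Gamma}_{(h_0, \tau_Q(v))}$.

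First I would record the compatibility between the two maps coming from the very definition of $exp^{\Gamma}_{h_0}$. Writing $q_0 = \tau_Q(v)$ and $\pr_1 : Q \times Q \to Q$ for the projection onto the first factor, we have $\pr_1 \circ exp^{\Gamma}_{h_0} = \tau_Q$, and hence $\pr_1 \circ T_v(exp^{\Gamma}_{h_0}) = T_v\tau_Q$. Consequently, if $w \in T_v(TQ)$ lies in $\ker T_v(exp^{\Gamma}_{h_0})$, then $T_v\tau_Q(w) = 0$, so $w$ belongs to the vertical subspace $\ker T_v\tau_Q = T_v(T_{q_0}Q)$; here the fiber $T_{q_0}Q$ is a vector space, so its tangent space at $v$ is canonically identified with $T_{q_0}Q$ itself.

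Next I would exploit the fact that, restricted to the fiber $T_{q_0}Q$, the map $exp^{\Gamma}_{h_0}$ takes the form $u \mapsto (q_0, exp^{\Gamma}_{(h_0, q_0)}(u))$: its first component is the constant $q_0$ and its second component is exactly the fiberwise exponential map. Differentiating along a vertical vector $w \in T_v(T_{q_0}Q)$ therefore yields $T_v(exp^{\Gamma}_{h_0})(w) = (0,\, T_v(exp^{\Gamma}_{(h_0,q_0)})(w))$. Thus, for $w \in \ker T_v(exp^{\Gamma}_{h_0})$ we obtain $T_v(exp^{\Gamma}_{(h_0,q_0)})(w) = 0$; but $exp^{\Gamma}_{(h_0, q_0)}$ is non-singular at $v$ by hypothesis, so its differential, a map between spaces of equal dimension $\dim Q$, is injective, forcing $w = 0$. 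Hence $T_v(exp^{\Gamma}_{h_0})$ is injective and, by the dimension count above, an isomorphism, which is precisely the claim.

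This reflects the block lower-triangular form of the Jacobian of $exp^{\Gamma}_{h_0}$ in adapted coordinates $(x^i, \dot x^i)$: an identity block from the first component $\tau_Q$ and the block $\partial X/\partial \dot x$ from the fiberwise exponential, whose nonvanishing determinant is exactly the non-singularity hypothesis. I do not expect a genuine obstacle here; the only point that needs care is the clean identification of the vertical tangent space $T_v(T_{q_0}Q)$ with the domain $T_{q_0}Q$ of the fiberwise exponential map, so that the assumed non-singularity of $exp^{\Gamma}_{(h_0,\tau_Q(v))}$ at $v$ transfers faithfully to the restriction of $exp^{\Gamma}_{h_0}$ to the fiber.
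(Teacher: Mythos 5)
Your proposal is correct and follows essentially the same route as the paper's proof: show the differential is injective by noting that any kernel vector is killed by $T_v\tau_Q$ (hence vertical, i.e.\ tangent to the fiber $D^{\Gamma}_{(h_0,\tau_Q(v))}$), then use the assumed non-singularity of the fiberwise exponential to conclude it vanishes, and finish by equality of dimensions. Your extra care in identifying the vertical tangent space with the fiber, so that the second component of $T_v(exp^{\Gamma}_{h_0})$ on vertical vectors is literally $T_v(exp^{\Gamma}_{(h_0,q_0)})$, is a slightly more explicit rendering of the step the paper states directly, not a different argument.
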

\begin{proof}
We must prove that the map
\[
T_{v}(exp^{\Gamma}_{h_{0}}): T_v(D^{\Gamma}_{h_{0}}) \simeq
T_v(TQ) \to T_{(\tau_{Q}(v), exp^{\Gamma}_{(h_{0},
\tau_{Q}(v))}(v))}(Q\times Q) \simeq T_{\tau_{Q}(v)}Q \times
T_{exp^{\Gamma}_{(h_{0},\tau_{Q}(v))}(v)}Q
\]
is a linear isomorphism.

Suppose that
\[
0 = (T_{v}(exp_{h_{0}}^{\Gamma}))(X_{v}), \; \; \mbox{ with }
X_{v} \in T_{v}(D^{\Gamma}_{h_{0}}).
\]
Then, we have that
\[
0 = (T_{v}\tau_{Q})(X_{v}) \; \; \mbox{ and } \; \; 0 =
(T_{v}exp^{\Gamma}_{(h_{0},\tau_{Q}(v))})(X_{v}).
\]
The first condition implies that
\[
X_{v} \in T_{v}(D^{\Gamma}_{h_{0}} \cap T_{\tau_{Q}(v)}Q) =
T_{v}(D^{\Gamma}_{(h_{0}, \tau_{Q}(v))})
\]
and thus, using the second one, we conclude that
\[
X_{v} = 0.
\]
\end{proof}
As we know, if $h_{0} > 0$ is sufficiently small and $q_{0} \in Q$
then the map $exp_{(h_{0}, q_{0})}^{\Gamma}: D^{\Gamma}_{(h_{0},
q_{0})} \to Q$ is non-singular at the point
$\dot{\sigma}_{q_0q_0}(0) = v_{q_0} \in D^{\Gamma}_{(h_{0},
q_{0})}$. Therefore, using Lemma \ref{non-singular}, we deduce the following result

\begin{theorem}\label{convexity2}
Let $\Gamma$ be a \sode\ in $Q$ and $q_{0}$ be a point of $Q$. Then,
one may find a sufficiently small positive number $h_{0}$, an open
subset ${\mathcal U} \subseteq D^{\Gamma}_{h_0} \subseteq TQ$, with $\dot{\sigma}_{q_0q_0}(0)
\in {\mathcal U}$, and open subsets $U, \tilde{U}$ of $Q$, with $q_{0} \in
U\subseteq \tilde{U}$, such that:
\begin{enumerate}
\item
The map
\[
exp^{\Gamma}_{h_{0}}: {\mathcal U} \subseteq D^{\Gamma}_{h_{0}} \to U \times U \subseteq Q \times Q
\]
is a diffeomorphism.
\item
 For every couple $(q, q') \in U \times U$ there
exists a unique trajectory of $\Gamma$
\[
\sigma_{qq'}: [0, h_{0}] \to Q
\]
in $\tilde{U}$ satisfying
\[
\sigma_{qq'}(0) = q, \; \; \sigma_{qq'}(h_{0}) = q' \; \; \mbox{
and } \; \; \dot{\sigma}_{qq'}(0) \in {\mathcal U}.
\]
\end{enumerate}
\end{theorem}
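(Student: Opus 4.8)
The plan is to combine the non-singularity of the fibrewise exponential $exp^{\Gamma}_{(h_0,q_0)}$ at $v_{q_0}=\dot{\sigma}_{q_0q_0}(0)$ (already recorded above as a consequence of Theorem \ref{convexity1}) with Lemma \ref{non-singular} and the inverse function theorem. First I would apply Lemma \ref{non-singular} to the distinguished vector $v=v_{q_0}\in D^{\Gamma}_{(h_0,q_0)}\subseteq D^{\Gamma}_{h_0}$: since $exp^{\Gamma}_{(h_0,q_0)}$ is non-singular at $v_{q_0}$, the lemma yields that the full exponential $exp^{\Gamma}_{h_0}\colon D^{\Gamma}_{h_0}\to Q\times Q$ is non-singular at $v_{q_0}$, i.e. $T_{v_{q_0}}(exp^{\Gamma}_{h_0})$ is a linear isomorphism. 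As $D^{\Gamma}_{h_0}$ is open in $TQ$ and $\dim TQ=\dim(Q\times Q)$, the inverse function theorem then gives an open neighbourhood of $v_{q_0}$ on which $exp^{\Gamma}_{h_0}$ restricts to a diffeomorphism onto an open subset of $Q\times Q$.

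Next I would locate the image of $v_{q_0}$. By definition $exp^{\Gamma}_{h_0}(v_{q_0})=(\tau_Q(v_{q_0}),\,exp^{\Gamma}_{(h_0,q_0)}(v_{q_0}))$; the first entry is $q_0$, while the second is $(\tau_Q\circ\Phi^{\Gamma})(h_0,v_{q_0})=\sigma_{q_0q_0}(h_0)=q_0$, because $v_{q_0}$ is the initial velocity of the trajectory $\sigma_{q_0q_0}$ joining $q_0$ to itself. Hence $exp^{\Gamma}_{h_0}(v_{q_0})=(q_0,q_0)$. Since the image of the local diffeomorphism is open and contains $(q_0,q_0)$, it contains a product neighbourhood; intersecting its two factors I would fix a single open set $U$ with $q_0\in U$ and $U\times U$ inside that image, and then set $\mathcal{U}:=(exp^{\Gamma}_{h_0})^{-1}(U\times U)$ within the neighbourhood above. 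This produces a diffeomorphism $exp^{\Gamma}_{h_0}\colon\mathcal{U}\to U\times U$ with $\dot{\sigma}_{q_0q_0}(0)=v_{q_0}\in\mathcal{U}$, which is exactly assertion (1).

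For (2), given $(q,q')\in U\times U$ I would put $v:=(exp^{\Gamma}_{h_0}|_{\mathcal{U}})^{-1}(q,q')$ and define $\sigma_{qq'}(t):=(\tau_Q\circ\Phi^{\Gamma})(t,v)$ for $t\in[0,h_0]$. The identity $exp^{\Gamma}_{h_0}(v)=(q,q')$ unpacks precisely to $\tau_Q(v)=q$ and $(\tau_Q\circ\Phi^{\Gamma})(h_0,v)=q'$, so $\sigma_{qq'}(0)=q$ and $\sigma_{qq'}(h_0)=q'$; moreover, $\Phi^{\Gamma}(\cdot,v)$ is an integral curve of the \sode\ $\Gamma$, so the second order (admissibility) condition forces $\dot{\sigma}_{qq'}(0)=v\in\mathcal{U}$. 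Uniqueness among trajectories with initial velocity in $\mathcal{U}$ is then immediate: any such trajectory equals $\Phi^{\Gamma}(\cdot,w)$ for some $w\in\mathcal{U}$ with $\tau_Q(w)=q$ and endpoint $q'$, whence $exp^{\Gamma}_{h_0}(w)=(q,q')=exp^{\Gamma}_{h_0}(v)$ and injectivity on $\mathcal{U}$ gives $w=v$.

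The one point demanding genuine care, and which I expect to be the main obstacle, is the requirement that each $\sigma_{qq'}$ take values in a fixed $\tilde{U}$: the inverse function theorem controls only the endpoints, not the intermediate portion of the trajectory. Here I would invoke continuous dependence on initial conditions together with compactness of $[0,h_0]$. The curve $\sigma_{q_0q_0}$ has image a compact subset of the open set $\tilde{U}$ provided by Theorem \ref{convexity1}, hence admits a relatively compact neighbourhood whose closure still lies in $\tilde{U}$. Since $\sigma_{qq'}\to\sigma_{q_0q_0}$ uniformly on $[0,h_0]$ as $(q,q')\to(q_0,q_0)$ (equivalently, as $v\to v_{q_0}$), shrinking $U$—and correspondingly $\mathcal{U}$—ensures that every $\sigma_{qq'}$ with $(q,q')\in U\times U$ remains inside $\tilde{U}$, while leaving the diffeomorphism property of (1) intact.
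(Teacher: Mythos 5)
Your proof is correct and follows essentially the same route as the paper: the paper likewise obtains Theorem \ref{convexity2} by combining the non-singularity of $exp^{\Gamma}_{(h_{0},q_{0})}$ at $\dot{\sigma}_{q_0q_0}(0)$ (recorded as a consequence of Theorem \ref{convexity1}) with Lemma \ref{non-singular} and an application of the inverse function theorem. The paper compresses this deduction into a single sentence, so the details you supply---the choice of a product neighbourhood $U\times U$ in the image, uniqueness via injectivity of $exp^{\Gamma}_{h_{0}}$ on $\mathcal{U}$, and the continuous-dependence/compactness argument ensuring the trajectories stay inside $\tilde{U}$---are precisely the steps it leaves implicit.
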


We will denote by 
$R^{e^-}_{h_{0}}: U \times U \to {\mathcal U}$ (respectively, $R^{e+}_{h_{0}}: U \times U \to {\mathcal U}$)
 the inverse map of the diffeomorphism 
$exp^{\Gamma}_{h_{0}}: {\mathcal U}  \to U \times U $ (respectively, $exp^{\Gamma}_{h_{0}} \circ \Phi^{\Gamma}_{-h_{0}}: 
\Phi^{\Gamma}_{h_{0}}({\mathcal U}) \to U \times U$).

The maps 
\[ 
R^{e^-}_{h_{0}}: U \times U \subseteq Q \times Q \to {\mathcal U}\subseteq TQ \mbox{ and }
 R^{e^+}_{h_{0}}: U \times U \subseteq Q \times Q \to \Phi^{\Gamma}_{h_0}({\mathcal U})\subseteq TQ
 \]
are called the exact retraction maps associated with $\Gamma$. We have that
\[
R^{e^-}_{h_{0}}(q, q') = \dot{\sigma}_{qq'}(0), \; \; R^{e^+}_{h_{0}}(q, q') = \dot{\sigma}_{qq'}(h_0).
\]
Note that
\[
R^{e^+}_{h_{0}} = \Phi^{\Gamma}_{h_0} \circ R^{e^-}_{h_{0}}.
\]

\[
\xymatrix{ {\mathcal U}\subseteq TQ
\ar[dd]_{\Phi_{h_0}^\Gamma} &   & &U\times U\subseteq Q\times Q \ar[lll]_{R^{e^-}_{h_0}}\ar[ddlll]^{{R^{e^+}_{h_0}}}\\
  &  & &\\
\Phi_{h_0}^\Gamma( {\mathcal U})\subseteq TQ&  & & }
\]

%\input{Sodes}

%!TEX root =  Exact-discrete.tex

\let\s\alpha
\let\t\beta
\let\e\epsilon
\def\rightrightarrow{\rightrightarrows}
\def\T{\CMcal{T}}

%\begin{theorem}
%Let $\Gamma$ be a \sode\ on $Q$ and $q$ be a point of $Q$. There
%exists $h>0$, an open subset $\mathcal{U}\subset TQ$, with
%$\dot{\sigma}_{q_0q_0}(0) \in {\mathcal U}$, and an open
%neighborhood $U$ of $q$ in $Q$, such that for every pair $(q_0,
%q_1) \in U \times U$ there exists a unique trajectory
%$\map{\sigma_{01}}{[0, h]}{Q}$ of $\Gamma$ satisfying
%\[
%\sigma_{01}(0) = q_0, \quad \sigma_{01}(h) = q_1 \quad \text{ and
%}\quad \dot{\sigma}_{01}(0) \in \mathcal{U}.
%\]
%\end{theorem}

\begin{remark}\label{reweyls}
{\rm 
In some applications, it is useful to define the following map
\begin{equation}\label{weyl}
\widetilde{exp}^{\Gamma}_{h_0}(v)=(exp^{\Gamma}_{(-h_0/2, \tau_Q(v))}(v), exp^{\Gamma}_{(h_0/2, \tau_Q(v))}(v))
\end{equation}
which is a local diffeomorphism since
\[
\widetilde{exp}^{\Gamma}_{h_0}=exp_{h_0}^{\Gamma}\circ \Phi^{\Gamma}_{-h_0/2}\; .
\]
We recall that $\Phi^{\Gamma}$ is the flow of $\Gamma$.
}
\end{remark}

\section{Convexity theorems for second order differential equations on Lie algebroids}\label{sec4}

In this section, we will obtain a version of Theorem \ref{convexity2} for a \sode\  on a general Lie algebroid
$A$.

First of all, in Section \ref{fibration}, we will discuss the particular case when $A$ is a special
integrable Lie algebroid: the vertical bundle associated with a fibration. In such a case, we will
prove a parametrized version of Theorem \ref{convexity2}. Next, in Section \ref{integrable}, we will consider
the more general case when $A$ is the Lie algebroid $AG$ associated with an arbitrary Lie
groupoid $G$. In fact, we will see that a \sode\ on $AG$ induces a \sode\ on the vertical
bundle of the source map of $G$ and, then, we will apply the results of the previous section. Finally, we will 
discuss the general case. For this purpose, we will use that for every Lie algebroid $A$ there exists a
(local) Lie groupoid whose Lie algebroid is $A$.

\subsection{The particular case of the vertical bundle of a fibration}\label{fibration}
 
Consider a surjective submersion $\map{\pi}{P}{M}$ and the pair
groupoid $P\times P\rightrightarrow P$ (see Appendix \ref{algebroide-grupoide}) . The subset $G\pi\subset P\times P$
given by
\[
G\pi=\set{(p_1,p_2)\in P\times P}{\pi(p_1)=\pi(p_2)}
\]
is a Lie subgroupoid of the pair groupoid. In consequence, the source and target
maps are $(p_1,p_2)\mapsto p_1$ and $(p_1,p_2)\mapsto p_2$,
respectively, the identity map is $p\mapsto (p,p)$ and the
multiplication is given by $(p_1,p_2)(p_2,p_3)=(p_1,p_3)$. The Lie
algebroid of $G\pi$ is the vector bundle whose fiber at a point
$p$ is
\[
A_p(G\pi)=\set{(0,v)\in T_pP\times T_pP}{T\pi(v)=0}
\]
with the anchor $(0,v)\mapsto v$, and hence it can be identified
with the vertical bundle $\map{\tau}{V\pi=\ker{T\pi}}{P}$ with the
canonical inclusion as the anchor map and Lie bracket in the space 
of sections $\Gamma(\tau)$ the restriction to $\Gamma(V\pi)$ of 
the standard Lie bracket of vector fields.

On $V\pi$ we can take coordinates as follows. We consider local
coordinates $(x^i)=(x^a,x^\alpha)$ in $P$ adapted to the submersion
$\pi$, that is, $\pi(x^a,x^\alpha)=(x^a)$. The coordinate vector
fields $\{e_\alpha=\partial/\partial x^\alpha\}$ are a basis of
local sections of $V\pi$, and hence we have coordinates
$(x^a,x^\alpha,y^\alpha)$ on $V\pi$, where $y^\alpha$ are the
components of a vector on such a coordinate basis. In these
coordinates the structure functions are
\[
\rho^a_\alpha=0\qquad
\rho^\beta_\alpha=\delta_\alpha^\beta\qquad\text{and}\qquad
C^\alpha_{\beta\gamma}=0.
\]

A \sode\ vector field on $V\pi$ is a vector field
$\Gamma\in\vectorfields{V\pi}$ such that
$T_a\tau(\Gamma(a))=\rho(a)$ for every $a\in V\pi$. In other words
$T_a\tau(\Gamma(a))$ is the vertical vector $a\in V\pi$ itself, and
hence, if $m=\pi(\tau(a))$ then the vector $\Gamma(a)$ is tangent
to $T(\pi^{-1}(m))$. It follows that a \sode\ on $V\pi$ is but a
parametrized version of an ordinary \sode, where the parameters
are the coordinates on the base manifold $M$. In other words,  it is
a smooth family of ordinary \sode s, one on each fiber of the
projection $\pi: P \to M$.

This can also be easily seen in coordinates. Locally,  such a
\sode\ vector field is
\begin{align*}
\Gamma&=\rho^a_\alpha y^\alpha\pd{}{x^a}+\rho^\beta_\alpha y^\alpha\pd{}{x^\alpha}+f^\alpha(x^b,x^\beta,y^\beta)\pd{}{y^\alpha}\\
&=y^\alpha\pd{}{x^\alpha}+f^\alpha(x^b,x^\beta,y^\beta)\pd{}{y^\alpha}
\end{align*}
for some local functions $f^\alpha\in\cinfty{V\pi}$. The integral
curves of $\Gamma$ are the solutions of the differential equations
%\begin{align*}
%\dot{x}^a&=0\\
%\dot{x}^\alpha&=y^\alpha\\
%\dot{y}^\alpha&=f^\alpha(x^b,x^\beta,y^\beta)\\
%\end{align*}
\[
\dot{x}^a=0,\qquad \dot{x}^\alpha=y^\alpha,\qquad
\dot{y}^\alpha=f^\alpha(x^b,x^\beta,y^\beta)
\]
or in other words
\begin{align*}
\dot{x}^a&=0\\
\ddot{x}^\alpha&=f^\alpha(x^b,x^\beta,\dot{x}^\beta).
\end{align*}
From this expression it is obvious that a \sode\ on $V\pi$ is
locally a parametrized version of an ordinary \sode, where the
parameters are the coordinates $(x^a)$ on the base manifold $M$.

On the other hand, $V\pi$ is a regular submanifold of $TP$. In
fact, the canonical inclusion $i_{V\pi}: V\pi \to TP$ is an
embedding. In addition, if $p_{0}$ is a point of $P$ then it is
easy to prove that:
\begin{enumerate}
\item
There exists an open subset $W$ of $P$, with $p_{0} \in W$, and
there exists an standard local \sode\ $\bar{\Gamma}$ on $P$, which
is defined in $TW$, such that
\[
\bar{\Gamma}_{|TW\cap V\pi} = \Gamma_{|TW\cap V\pi}
\]
\item
If $\bar{\sigma}: [0, h] \to W\subseteq P$ is a trajectory of
$\bar{\Gamma}$ and $\pi(\bar{\sigma}(0)) = \pi(\bar{\sigma}(h)) =
m$ then $\bar{\sigma}([0, h]) \subseteq \pi^{-1}(m)$ and
$\bar{\sigma}$ is a trajectory of the standard \sode\
$\Gamma_{|T(\pi^{-1}(m))}$.
\end{enumerate}
Note that the local equations  defining  $V\pi$ as a
submanifold of $TP$ are $y^a = 0$ and, thus, it is
sufficient to take
\[
\bar{\Gamma} = \displaystyle y^{a} \frac{\partial}{\partial
x^a} + y^{\alpha} \frac{\partial}{\partial x^{\alpha}} +
f^{\alpha}(x^b, x^{\beta}, y^{\beta})\frac{\partial}{\partial
y^{\alpha}}.
\]
Therefore, using Theorem \ref{convexity1}, one may find an open
neighborhood of $p_0$ in $P$ and a unique curve
$\bar{\sigma}_{p_0p_0} \equiv \sigma_{p_0p_0}$ on it which
connects the point $p_0$ with itself and such that it is
trajectory of $\bar{\Gamma}$, for $h$ enough small. Then, from the second condition, it
follows that the curve $\sigma_{p_0p_0}$ is contained in
$\pi^{-1}(\pi(p_{0}))$ and $v_{p_0} = \dot{\sigma}_{p_0p_0}(0) \in
V_{p_0}(\pi)$. Moreover, if we apply Theorem \ref{convexity2} to
the standard \sode\ $\bar{\Gamma}$, we deduce the following result
\begin{theorem}
\label{caso.Vpi} Let $\Gamma$ be a \sode\ on the Lie algebroid
$V\pi\to P$ and let $p_0$ be a point in $P$. Then, there exists a
sufficiently small positive number $h_{0} > 0$,  an open subset
${\mathcal U} \subseteq V\pi$, with $v_{p_{0}} \in {\mathcal U}$,
and open subsets $U, \tilde{U}$ of $P$, with $p_{0} \in U\subseteq \tilde{U}$, such that 
\begin{enumerate}
\item
The exponential map of $\Gamma$ at $h_0$
\[
exp^{\Gamma}_{h_{0}}: {\mathcal U} \to (U \times U)\cap G\pi, \; \;  v \in {\mathcal U} \to (\tau_{V\pi}(v), \tau_{V\pi}(\Phi^{\Gamma}_{h_{0}}(v))),
\]
is a diffeomorphism. Here $\tau_{V\pi}: V\pi \to P$ is the canonical projection and $\Phi^{\Gamma}$ is the flow of the vector field $\Gamma$ on $V\pi$. 
\item
For
every couple $(p, p') \in (U \times U) \cap G{\pi}$ there exists
a unique trajectory $\sigma_{pp'}: [0, h_{0}] \to \pi^{-1}(\pi(p))$
in $\tilde{U} \cap \pi^{-1}(\pi(p))$ of the \sode\  $\Gamma_{|T(\pi^{-1}(\pi(p)))}$ which satisfies
\[
\sigma_{pp'}(0) = p, \; \; \sigma_{pp'}(h_{0}) = p' \; \; \mbox{
and } \dot{\sigma}_{pp'}(0) \in {\mathcal U}.
\]
\end{enumerate}
\end{theorem}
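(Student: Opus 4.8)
The plan is to reduce the statement to the standard \sode\ $\bar\Gamma$ on $W\subseteq P$ constructed just before the theorem, and then to invoke Theorem \ref{convexity2}. I would first regard $W$ itself as the ambient manifold and apply Theorem \ref{convexity2} to the standard \sode\ $\bar\Gamma$ on $W$. This yields a sufficiently small $h_0>0$, an open subset $\bar{\mathcal U}\subseteq D^{\bar\Gamma}_{h_0}\subseteq TW$ with $v_{p_0}=\dot\sigma_{p_0p_0}(0)\in\bar{\mathcal U}$, and open subsets $U\subseteq\tilde U\subseteq W$ of $P$ with $p_0\in U$, such that $exp^{\bar\Gamma}_{h_0}\colon\bar{\mathcal U}\to U\times U$ is a diffeomorphism and each pair $(p,p')\in U\times U$ is joined by a unique trajectory $\bar\sigma_{pp'}$ of $\bar\Gamma$ lying in $\tilde U$ with $\dot{\bar\sigma}_{pp'}(0)\in\bar{\mathcal U}$. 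Carrying out the construction on $W$ (rather than on all of $P$) guarantees $\tilde U\subseteq W$, which is exactly what is needed in order to apply the second confinement property of $\bar\Gamma$.

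Next I would put $\mathcal U=\bar{\mathcal U}\cap V\pi$, which is open in $V\pi$ and contains $v_{p_0}$, since $v_{p_0}\in V_{p_0}\pi$. Because $\Gamma$ is a \sode\ on $V\pi$, its flow stays in $V\pi$, where it coincides with the flow of $\bar\Gamma$ by uniqueness of integral curves; hence on $\mathcal U$ the map $exp^{\bar\Gamma}_{h_0}$ is precisely the map $exp^{\Gamma}_{h_0}$ of the statement, and for $v\in\mathcal U$ both $\tau_{V\pi}(v)$ and $\tau_{V\pi}(\Phi^{\Gamma}_{h_0}(v))$ lie in the same fiber of $\pi$, so the image is contained in $(U\times U)\cap G\pi$. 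For the converse inclusion, given $(p,p')\in(U\times U)\cap G\pi$ the unique trajectory $\bar\sigma_{pp'}$ satisfies $\pi(\bar\sigma_{pp'}(0))=\pi(\bar\sigma_{pp'}(h_0))$; by the second property of $\bar\Gamma$ it is confined to $\pi^{-1}(\pi(p))$ and is a trajectory of $\Gamma_{|T(\pi^{-1}(\pi(p)))}$, so $\dot{\bar\sigma}_{pp'}(0)\in V_p\pi\cap\bar{\mathcal U}=\mathcal U$. Thus $exp^{\bar\Gamma}_{h_0}$ restricts to a bijection between the embedded submanifolds $\mathcal U\subseteq\bar{\mathcal U}$ and $(U\times U)\cap G\pi\subseteq U\times U$ (and a dimension count, $\dim V\pi=2\dim P-\dim M=\dim G\pi$, confirms that these are the correct corresponding submanifolds). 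Since both $exp^{\bar\Gamma}_{h_0}$ and its inverse $R^{e^-}_{h_0}$ restrict to smooth maps between these embedded submanifolds, the restricted map is a diffeomorphism, which proves part (1).

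For part (2) I would simply take $\sigma_{pp'}=\bar\sigma_{pp'}$: the argument above already exhibits it as a trajectory of $\Gamma_{|T(\pi^{-1}(\pi(p)))}$ lying in $\tilde U\cap\pi^{-1}(\pi(p))$ with the prescribed endpoints and with $\dot\sigma_{pp'}(0)\in\mathcal U$. Uniqueness is inherited from the uniqueness clause of Theorem \ref{convexity2}, once one notes that any trajectory of $\Gamma_{|T(\pi^{-1}(\pi(p)))}$ has its velocity curve in $V\pi$, where $\Gamma$ and $\bar\Gamma$ agree, so it is automatically a trajectory of $\bar\Gamma$ as well.

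I expect the only genuinely delicate points to be, on the one hand, the bookkeeping needed to keep $U\subseteq\tilde U\subseteq W$ so that the confinement property of $\bar\Gamma$ is available, and on the other hand the passage from a bijection of point sets to a diffeomorphism of submanifolds. Both the image and the preimage inclusions rest on this confinement property, which is precisely the place where the fibered structure of $V\pi$, as opposed to that of an arbitrary \sode\ on $TP$, is used.
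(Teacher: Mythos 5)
Your proposal is correct and takes essentially the same route as the paper: the paper also proves Theorem \ref{caso.Vpi} by applying Theorem \ref{convexity2} to the local standard \sode\ $\bar{\Gamma}$ on $W$ (extending $\Gamma$ off $V\pi$) and then using the confinement property of $\bar{\Gamma}$-trajectories with $\pi(\bar{\sigma}(0))=\pi(\bar{\sigma}(h_0))$ to transfer the conclusion to $\Gamma$ on $V\pi$. Your write-up simply makes explicit the details the paper leaves implicit, namely keeping $U\subseteq\tilde{U}\subseteq W$, identifying the flows of $\Gamma$ and $\bar{\Gamma}$ on $V\pi$, and checking that the diffeomorphism $exp^{\bar{\Gamma}}_{h_0}$ restricts to a diffeomorphism between the embedded submanifolds $\mathcal{U}=\bar{\mathcal{U}}\cap V\pi$ and $(U\times U)\cap G\pi$.
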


As in the standard case, we will denote by
\[
R^{e^-}_{h_{0}}: (U \times U)\cap G\pi \subseteq P \times P \to {\mathcal U}\subseteq V\pi  \mbox{ and }
 R^{e^+}_{h_{0}}: (U \times U)\cap G\pi \subseteq P \times P \to \Phi^{\Gamma}_{h_0}({\mathcal U})\subseteq V\pi
 \]
the exact retraction maps associated with $\Gamma$. In other words,
\[
R^{e^-}_{h_{0}} = (exp^{\Gamma}_{h_0})^{-1} \; \; \mbox{ and }  \; \; R^{e^+}_{h_{0}} = \Phi_{h_0}^{\Gamma} \circ R^{e-}_{h_0}.
\]

\subsection{The particular case of the Lie algebroid of a Lie groupoid}\label{integrable}

We will consider a Lie groupoid 
$G\rightrightarrow M$ with source map
$\alpha$, target map $\beta$, and consider the fibration
$\pi\equiv\alpha$ and the associated Lie algebroid $V\alpha$ as
above. Let $\map{\tau}{AG}{M}$ be the Lie algebroid of $G$, and denote
by $\rho$ its anchor.  Denote by $\Psi$ the vector bundle map
$\map{\Psi}{V\alpha}{AG}$ given by $\Psi(v_g)=Tl_{g^{-1}}v_g$,
for every $v_g\in V\alpha$. This map is well defined since
$T\alpha(Tl_{g^{-1}}v_g)=T\alpha(v_g)=0$ and hence
$Tl_{g^{-1}}v_g$ is $\alpha$-vertical at the identity in
$\beta(g)$. The following commutative diagram illustrates the situation:
\[
\xymatrix{ V\alpha
\ar[dd]_{\tau_{V\alpha}} \ar[rr]^{\Psi}&    &AG \ar[dd]^{\tau}\\
  &  & \\
G\ar[rr]^{\beta}&  & M }
\]

Moreover, $\Psi: V\alpha \to AG$ is a Lie algebroid morphism. This follows using that
if $X \in \Gamma(AG)$ and $\lvec{X}$ is the corresponding left invariant vector field on  
$G$, then $\lvec{X}$ is a section of the vector bundle $\tau_{V\alpha}: V\alpha \to G$ and:
\begin{enumerate}
\item
$\lvec{X}$ and $X$ are $\Psi$-related;
\item
If $\lcf \cdot, \cdot \rcf $ is the Lie bracket in $\Gamma(AG)$
\[
\lvec{\lcf X, Y \rcf} = [\lvec{X}, \lvec{Y}], \; \; \; \mbox{ for } X, Y \in \Gamma(AG)
\]
and
\item
The vector field $\lvec{X}$ on $G$ is $\beta$-projectable over $\rho(X)$, where $\rho$ is the anchor map
of the Lie algebroid $AG$.
\end{enumerate}

On the other hand, given a \sode\ $\Gamma$ in $AG$ there exists a unique \sode\
$\tilde{\Gamma}$ in $V\alpha$ which is $\Psi$ related to $\Gamma$,
that is $T\Psi\circ\tilde{\Gamma}=\Gamma\circ\Psi$. Indeed, this
is a special case of the following result, by taking into account
that $\Psi$ is a fiberwise bijective morphism of Lie algebroids.

\begin{proposition}\label{rel-SODES}
Let $\map{\tau_1}{E_1}{M_1}$ and $\map{\tau_2}{E_2}{M_2}$ be Lie
algebroids and let $\map{\Psi}{E_1}{E_2}$ be a morphism of Lie
algebroids which is fiberwise bijective. Given a \sode\ vector
field $\Gamma_2$ on the Lie algebroid $E_2$ there exists a unique
\sode\ vector field $\Gamma_1$ on the Lie algebroid $E_1$ such
that $T\Psi\circ\Gamma_1=\Gamma_2\circ\Psi$.
\end{proposition}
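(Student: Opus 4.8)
The plan is to reduce the statement to a pointwise problem on $E_1$ and solve it by linear algebra. Both defining conditions on the sought vector field $\Gamma_1$ are pointwise: for each $a \in E_1$ lying over $x = \tau_1(a)$ we must produce $\Gamma_1(a) \in T_a E_1$ satisfying the second order condition $T_a\tau_1(\Gamma_1(a)) = \rho_1(a)$ together with the relatedness condition $T_a\Psi(\Gamma_1(a)) = \Gamma_2(\Psi(a))$, where $\rho_1$ and $\rho_2$ denote the anchors of $E_1$ and $E_2$ and $\psi \colon M_1 \to M_2$ is the base map of $\Psi$.

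First I would settle uniqueness. If $\Gamma_1$ and $\Gamma_1'$ are two \sode s on $E_1$ both $\Psi$-related to $\Gamma_2$, then $T_a\tau_1$ sends $\Gamma_1(a) - \Gamma_1'(a)$ to $\rho_1(a) - \rho_1(a) = 0$, so the difference is $\tau_1$-vertical and hence lies in the fibre $(E_1)_x$ under the canonical identification of $T_a^V E_1$ with $(E_1)_x$. On vertical vectors the tangent map $T_a\Psi$ is precisely the fibre map $\Psi_x \colon (E_1)_x \to (E_2)_{\psi(x)}$, which is injective by the fibrewise bijectivity hypothesis; since $T_a\Psi(\Gamma_1(a) - \Gamma_1'(a)) = 0$, the difference vanishes, so at most one such $\Gamma_1$ exists.

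For existence, at each $a$ I consider the linear map $\Lambda_a \colon T_a E_1 \to T_x M_1 \times T_{\Psi(a)} E_2$, $\Lambda_a(v) = (T_a\tau_1(v), T_a\Psi(v))$, and seek $\Gamma_1(a)$ with $\Lambda_a(\Gamma_1(a)) = (\rho_1(a), \Gamma_2(\Psi(a)))$. The same vertical argument shows $\Lambda_a$ is injective, so its image has dimension $\dim M_1 + \operatorname{rank} E_1 = \dim M_1 + \operatorname{rank} E_2$, the ranks agreeing because $\Psi$ is fibrewise bijective. Because $\tau_2 \circ \Psi = \psi \circ \tau_1$, every element of the image satisfies $T_{\Psi(a)}\tau_2(u) = T_x\psi(w)$; since $\tau_2$ is a submersion, this constraint cuts out a subspace of exactly that dimension, whence the image of $\Lambda_a$ equals $\{(w,u) \mid T_{\Psi(a)}\tau_2(u) = T_x\psi(w)\}$. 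It then remains to check that the target datum lies in this subspace, that is $T_{\Psi(a)}\tau_2(\Gamma_2(\Psi(a))) = T_x\psi(\rho_1(a))$: the left side equals $\rho_2(\Psi(a))$ because $\Gamma_2$ is a \sode, while the right side equals $\rho_2(\Psi(a))$ by the anchor compatibility $\rho_2 \circ \Psi = T\psi \circ \rho_1$ of the Lie algebroid morphism, so the two coincide and a unique $\Gamma_1(a)$ exists.

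The main point to verify is exactly this last compatibility: the system determining $\Gamma_1(a)$ looks overdetermined, and its solvability hinges on the precise matching between the second order condition on $\Gamma_2$ and the anchor compatibility of $\Psi$, the bracket compatibility of the morphism playing no role. Finally I would establish smoothness of $a \mapsto \Gamma_1(a)$: in bundle coordinates $(x^i, y^\alpha)$ on $E_1$ and $(\bar x^A, \bar y^a)$ on $E_2$ adapted to local bases of sections, $\Psi$ reads $\bar x^A = \psi^A(x)$, $\bar y^a = \Psi^a_\alpha(x) y^\alpha$ with $(\Psi^a_\alpha)$ invertible, and matching the $\partial/\partial \bar y^a$ components of $T\Psi \circ \Gamma_1$ and $\Gamma_2 \circ \Psi$ yields the explicit, manifestly smooth formula
\[
\Gamma_1^\alpha = (\Psi^{-1})^\alpha_a\Bigl[\Gamma_2^a(\psi(x), \Psi(x)y) - \rho^i_\beta \frac{\partial \Psi^a_\gamma}{\partial x^i}\, y^\beta y^\gamma\Bigr],
\]
which simultaneously re-derives existence, uniqueness and smoothness, the $\partial/\partial \bar x^A$ components matching automatically by anchor compatibility.
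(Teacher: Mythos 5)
Your proof is correct, and its existence argument takes a genuinely different route from the paper's. The uniqueness step is the same in both (the difference of two candidates is $\tau_1$-vertical and is killed by the fibrewise-injective map $T_a\Psi$), but for existence the paper proceeds by an affine correction: it fixes an arbitrary auxiliary \sode\ $\Gamma$ on $E_1$, writes the unknown as $\Gamma(a_1)+\xi^V(a_1,c_1)$ with $\xi^V$ the canonical vertical lift, and observes that the relatedness equation becomes a purely vertical linear equation $\xi^V(\Psi(a_1),\Psi(c_1))=\Gamma_2(\Psi(a_1))-T\Psi(\Gamma(a_1))$, solvable uniquely for $c_1$ by fibrewise bijectivity. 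You instead analyse the joint constraint map $\Lambda_a=(T_a\tau_1,T_a\Psi)$, prove injectivity, and identify its image with the fibre-product subspace $\set{(w,u)}{T\tau_2(u)=T\psi(w)}$ by a dimension count using $\rank E_1=\rank E_2$ and the submersivity of $\tau_2$, so that existence reduces to checking the target datum lies in that subspace. Both arguments ultimately rest on the same compatibility --- the \sode\ condition on $\Gamma_2$ together with $\rho_2\circ\Psi=T\psi\circ\rho_1$ --- which in the paper appears as verticality of the right-hand side and in your proof as membership of $(\rho_1(a),\Gamma_2(\Psi(a)))$ in the fibre product. What each buys: the paper's route is coordinate-free and avoids counting dimensions, but it silently invokes the existence of some auxiliary \sode\ on $E_1$ (standard, yet a genuine extra ingredient), and it leaves smoothness of the resulting $\Gamma_1$ implicit; your route needs no auxiliary \sode\ (at the cost of a finite-dimensional dimension count) and your closing coordinate formula settles smoothness explicitly. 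Your observation that the bracket compatibility of $\Psi$ plays no role is also accurate and sharpens the paper's loose appeal to ``$\Psi$ is a morphism of Lie algebroids'': only the vector-bundle morphism property and the anchor compatibility are used.
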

\begin{proof}
We have to show that for every $a_1\in E_1$ there exists a unique
$v_1\in T_{a_1}E_1$ satisfying the equations
\[
T\Psi(v_1)=\Gamma_2(\Psi(a_1)),\qquad\text{and}\qquad
T\tau_1(v_1)=\rho_1(a_1).
\]
Note that if $v_{1}, v_{1}' \in T_{a_{1}}E_{1}$ satisfy these
conditions then $v_{1}' - v_{1} \in Ker (T\tau_{1})$ and, since
$\Psi$ is fiberwise bijective and $(T\Psi)(v_1) =
(T\Psi)(v_{1}')$, we conclude that $v_{1}' = v_{1}$.

Next, we will see that one may find a vector $v_{1} \in
T_{a_{1}}E_{1}$ which satisfies the above equations.

 For that consider a fixed (but
arbitrary) auxiliary \sode\ vector field
$\Gamma\in\vectorfields{E_1}$. Since $\Gamma(a_1)$ projects to
$\rho_1(a_1)$, the vector $v_1$ satisfies the second equation if
and only if the vector $w_1=v_1-\Gamma(a_1)$ is vertical. If $\xi^V: E_1\times E_1\rightarrow V\tau_1$ is the canonical vertical lift, it follows that  we can write  $\xi^V(a_1,c_1)=w_1=v_1-\Gamma(a_1)$ for
a unique $c_1\in E_1$, and then, using that $\Psi$ is a morphism
of Lie algebroids, the first equation reads
\[
\xi^V(\Psi(a_1),\Psi(c_1))=\Gamma_2(\Psi(a_1))-T\Psi(\Gamma(a_1)).
\]
The right hand side of this equation is vertical at the point
$\Psi(a_1)$, and since $\Psi$ is fiberwise bijective it has a
unique solution $c_1$. Thus, the vector
$v_1=\Gamma(a_1)+\xi^V(a_1,c_1)$ is the solution for our
equations.
\end{proof}

%{Otra prueba:}
%Indeed, since $\Psi$ is fiberwise bijective, its prolongation $\map{\T^\Psi\Psi}{\T^{E_1}{E_1}}{\T^{E_2}E_2}$ is also fiberwise bijective and we have that on vertical vectors is given by $\T^\Psi\Psi(a,0,\xi^V(a,b))=\xi^V(\Psi(a),\Psi(b))$. Let $\bar{\Gamma}_2$ be the \sode\ section of $\T^{E_2}{E_2}$ given by $(\bar{\Gamma}_2(a)=(a,a,\Gamma_2(a))$ for $a\in E_2$. We will show that for every $a\in E_1$ the equation $\T^\psi\psi(a,a,v)=(\psi(a),\psi(a),\Gamma_2(\Psi(a)))$ has a unique solution. Taking $\Gamma_1(a)=v$ we will have a vector field on $E_1$ which by construction is a \sode\ on the Lie algebroid $E_1$.

%For that consider a fixed (but arbitrary) auxiliary \sode\ vector field $\Gamma\in\vectorfields{E_1}$. Since $T\tau_1(v)=\rho(a)=T\tau_1(\Gamma(a))$ we can write $(a,a,v)=(a,a,\Gamma(a))+(a,0,w)$, for a uniquely defined $\tau_1$-vertical vector $w\in T_aE_1$. Thus the above equation is equivalent to $\T^\Psi\Psi(a,0,w)=(\psi(a),0,\Gamma_2(\Psi(a))-T\Psi(\Gamma(a)))$, where we have used that
%\[
%(\Psi(a),\Psi(a),\Gamma_2(\Psi(a))-(\Psi(a),\Psi(a),T\Psi(\Gamma(a)))
%= (\Psi(a),0,\Gamma_2(\Psi(a))-T\Psi(\Gamma(a))).
%\]
%By further writing the vertical vector $\Gamma_2(\Psi(a))-T\Psi(\Gamma(a)))=\xi^V(\Psi(a),c)$, for a unique $c\in E_2$, $w=\xi^V(a,b)$ for a unique $b\in E_1$, and $m=\tau_1(a)$, this equation is equivalent to  $\Psi_m(b)=c$ which has a unique solution. It is easy to see that the solution does not depends on the choice of the auxiliary \sode\ $\Gamma$.

Let $m_0\in M$ and consider the identity $\varepsilon(m_0)\in G$.
For $\pi=\alpha$, we can now apply Theorem~\ref{caso.Vpi} to the
unique \sode\ $\tilde{\Gamma}$ in $V\alpha$ which is $\Psi$
related to $\Gamma$, and the point $p_0=\varepsilon(m_0)$.

As we know, one may find an open neighborhood of $\varepsilon(m_0)$
in $G$ and a unique curve
$\tilde{\sigma}_{\varepsilon(m_{0})\varepsilon(m_{0})}$ on it which
connects the point $\varepsilon(m_{0})$ with itself and such that it
is a trajectory of $\tilde{\Gamma}_{|T(\alpha^{-1}(m_0))}$. In fact,
the curve $\tilde{\sigma}_{\varepsilon(m_{0})\varepsilon(m_{0})}$ is
contained in $\alpha^{-1}(m_{0})$ and, therefore, $v_{m_{0}} =
\dot{\tilde{\sigma}}_{\varepsilon(m_{0})\varepsilon(m_{0})}(0) \in
V_{\varepsilon(m_{0})}\alpha = A_{m_{0}}G$.

Moreover, we may prove the following result

\begin{theorem}\label{convexity-definitivo}
Let $\Gamma$ be a \sode\ vector field on the Lie algebroid $AG\to
M$ of the Lie groupoid $G\rightrightarrow M$,  $m_0\in M$  a point in the
base manifold and $\tilde{\Gamma}$ the corresponding \sode\ in the Lie algebroid $V\alpha \to G$. 
Then, there exists a sufficiently small positive
number $h > 0$, an open subset ${\mathcal U}$ in $AG$, with
$v_{m_0} \in {\mathcal U}$, and open subsets $U, \tilde{U}$ of $G$, with
$\varepsilon(m_{0}) \in U \subseteq \tilde{U}$, such that:
\begin{enumerate}
\item
The exponential map associated with $\Gamma$ at $h$
\[
exp^{\Gamma}_h: {\mathcal U} \to U, \; \; v \in {\mathcal U} \to \tau_{V\alpha}(\Phi^{\tilde{\Gamma}}_h(v)) \in U
\]
is a diffeomorphism. Here $\tau_{V\alpha}: V\alpha \to G$ is the canonical projection and $\Phi^{\tilde{\Gamma}}$ is 
the flow of the vector field $\tilde{\Gamma}$ on $V\alpha$. 
\item
For every $g\in U$ there exists
a unique trajectory $\sigma_{\varepsilon(\alpha(g))g}: [0, h] \to
\alpha^{-1}(\alpha(g))$ of $\tilde{\Gamma}$ in $\tilde{U} \cap \alpha^{-1}(\alpha(g))$ satisfying the following conditions
\[
\sigma_{\varepsilon(\alpha(g))g}(0) = \varepsilon(\alpha(g)), \; \;
\sigma_{\varepsilon(\alpha(g))g}(h) = g, \; \;
\dot{\sigma}_{\varepsilon(\alpha(g))g}(0) \in {\mathcal U}.
\]
Thus, the induced curve $a_{\varepsilon(\alpha(g))g} = \Psi \circ
\dot{\sigma}_{\varepsilon(\alpha(g))g}$ in $AG$ is an integral curve of
the \sode\ $\Gamma$. In fact, $a_{\varepsilon(\alpha(g))g}$ is the integral curve of $\Gamma$ with
initial condition $\dot{\sigma}_{\varepsilon(\alpha(g))g}(0)$ and, moreover,
\[
a_{\varepsilon(\alpha(g))g}(h) = T_g l_{g^{-1}}(\dot{\sigma}_{\varepsilon(\alpha(g))g}(h)).
\] 
\end{enumerate}
\end{theorem}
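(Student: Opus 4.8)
The key idea is to transport Theorem~\ref{caso.Vpi}, which is already established for a \sode\ on the vertical bundle $V\pi$ of a fibration, to the present setting by taking $\pi = \alpha$. Since the source map $\alpha\colon G\to M$ is a surjective submersion and $\tilde\Gamma$ is by construction a \sode\ on $V\alpha$, the hypotheses of Theorem~\ref{caso.Vpi} are met at the point $p_0 = \varepsilon(m_0)$. First I would invoke that theorem to obtain $h_0>0$, an open set $\mathcal{U}\subseteq V\alpha$ containing $v_{m_0}$, and open sets $U,\tilde U$ of $G$ with $\varepsilon(m_0)\in U\subseteq\tilde U$, such that the map $v\mapsto(\tau_{V\alpha}(v),\tau_{V\alpha}(\Phi^{\tilde\Gamma}_{h_0}(v)))$ is a diffeomorphism onto $(U\times U)\cap G\alpha$, and through every $(p,p')\in(U\times U)\cap G\alpha$ there is a unique $\tilde\Gamma$-trajectory.

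\textbf{From the product exponential to the single-variable one.} The difference between Theorem~\ref{caso.Vpi} and the present claim is that here I want a diffeomorphism onto $U$ alone rather than onto a subset of $U\times U$. The bridge is the observation, recorded just before the statement, that the trajectory $\tilde\sigma_{\varepsilon(m_0)\varepsilon(m_0)}$ lies in $\alpha^{-1}(m_0)$ and that $v_{m_0}\in V_{\varepsilon(m_0)}\alpha = A_{m_0}G$. The plan is to shrink $\mathcal{U}$, if necessary, so that its base points under $\tau_{V\alpha}$ all lie in the fibre $\alpha^{-1}$-component containing $\varepsilon(m_0)$; more precisely, I would arrange that $\tau_{V\alpha}(\mathcal{U})$ is a neighbourhood of $\varepsilon(m_0)$ on which the first component $\tau_{V\alpha}(v)$ is forced to be $\varepsilon(\alpha(\,\cdot\,))$ after the identification with $AG$. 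Concretely, restricting attention to $\mathcal{U}\subseteq V_{\varepsilon(m_0)}\alpha\cong A_{m_0}G$ (so that the source point is fixed at $\varepsilon(m_0)$), the first component of $exp^{\tilde\Gamma}_{h_0}$ becomes constant, and the map reduces to the \emph{second} component $v\mapsto\tau_{V\alpha}(\Phi^{\tilde\Gamma}_{h_0}(v))\in U$. I would then show this is the required diffeomorphism onto $U$, since the first component carried no information.

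\textbf{Identifying $\mathcal U$ with an open set in $AG$ and verifying the final formula.} The morphism $\Psi\colon V\alpha\to AG$ is fiberwise bijective, so $\Psi$ restricts to a diffeomorphism of $V_{\varepsilon(m_0)}\alpha$ onto $A_{m_0}G$; under this identification $\mathcal{U}$ becomes an open subset of $AG$ containing $v_{m_0}$, as the statement asks. For part~(ii) I would take $g\in U$, use the surjectivity of the exponential diffeomorphism to produce the unique $\tilde\Gamma$-trajectory $\sigma_{\varepsilon(\alpha(g))g}$ from $\varepsilon(\alpha(g))$ to $g$ inside $\tilde U\cap\alpha^{-1}(\alpha(g))$ with $\dot\sigma_{\varepsilon(\alpha(g))g}(0)\in\mathcal{U}$, exactly as in Theorem~\ref{caso.Vpi}(ii) applied fibrewise. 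That $a_{\varepsilon(\alpha(g))g}=\Psi\circ\dot\sigma_{\varepsilon(\alpha(g))g}$ is an integral curve of $\Gamma$ is immediate from the defining relation $T\Psi\circ\tilde\Gamma=\Gamma\circ\Psi$: applying $\Psi$ to the $\tilde\Gamma$-integral curve and differentiating, the $\Psi$-relatedness converts a $\tilde\Gamma$-integral curve into a $\Gamma$-integral curve with the initial value $\Psi(\dot\sigma_{\varepsilon(\alpha(g))g}(0))$. Finally, the closing formula $a_{\varepsilon(\alpha(g))g}(h)=T_g l_{g^{-1}}(\dot\sigma_{\varepsilon(\alpha(g))g}(h))$ is nothing but the definition $\Psi(v_g)=T l_{g^{-1}}v_g$ evaluated at the endpoint $v_g=\dot\sigma_{\varepsilon(\alpha(g))g}(h)\in V_g\alpha$.

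\textbf{The main obstacle.} The only genuinely delicate point is the passage from the two-variable exponential of Theorem~\ref{caso.Vpi} to the one-variable map landing in $U$: one must be careful that restricting the source point to $\varepsilon(m_0)$ (equivalently, restricting $\mathcal{U}$ to a single fibre $V_{\varepsilon(m_0)}\alpha=A_{m_0}G$) really does yield an \emph{open} neighbourhood of $v_{m_0}$ in $AG$ and that the induced map onto $U$ remains a local diffeomorphism. This is a matter of checking that the rank of the second component alone is maximal at $v_{m_0}$, which follows because the first component is constant along the fibre and Theorem~\ref{caso.Vpi}(i) already guarantees that the full differential is an isomorphism; no essentially new estimate is needed beyond bookkeeping the fibrewise structure. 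Everything else is a transcription of the fibration case through the diffeomorphism $\Psi$.
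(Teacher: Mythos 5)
Your first step (applying Theorem~\ref{caso.Vpi} with $\pi=\alpha$ at $p_0=\varepsilon(m_0)$) is exactly how the paper begins, and your treatment of part (ii) ($\Psi$-relatedness turning the $\tilde\Gamma$-trajectory into a $\Gamma$-integral curve, and the endpoint formula being just the definition of $\Psi$) is also correct. But the central step --- passing from the two-component exponential onto $(U\times U)\cap G\alpha$ to a one-component exponential onto an open subset $U$ of $G$ --- is wrong as you have set it up. You restrict $\mathcal{U}$ to the \emph{single fibre} $V_{\varepsilon(m_0)}\alpha\cong A_{m_0}G$, and claim this yields an open neighbourhood of $v_{m_0}$ in $AG$. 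It does not: $A_{m_0}G$ is a fibre of the bundle $\tau\colon AG\to M$, of codimension $\dim M$ in the total space $AG$, so an open subset of it is open in $AG$ only when $\dim M=0$. Worse, the image of your restricted map cannot be an open subset of $G$: a vector $v\in A_{m_0}G$ is tangent to $\alpha^{-1}(m_0)$, its $\tilde\Gamma$-trajectory stays in $T(\alpha^{-1}(m_0))$, and hence $\tau_{V\alpha}(\Phi^{\tilde\Gamma}_h(v))\in\alpha^{-1}(m_0)$. So your map lands inside the single source-fibre $\alpha^{-1}(m_0)$, a submanifold of dimension $\dim G-\dim M$; it can never cover a neighbourhood $U$ of $\varepsilon(m_0)$ in $G$. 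This is not bookkeeping: the whole content of Theorem~\ref{convexity-definitivo}, as opposed to Theorem~\ref{caso.Vpi}, is that the domain is an open subset of the full algebroid $AG$ (dimension $\dim G$) and the target is an open subset of $G$, which is what makes the exact discrete Lagrangian a function on an open subset of the groupoid.

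The correct move, which is what the paper does, is to intersect $\tilde{\mathcal U}$ with $AG$ viewed as $V\alpha|_{\varepsilon(M)}$, i.e.\ to keep the vectors based at \emph{all} identity points $\varepsilon(x)$, $x\in M$, not just at $\varepsilon(m_0)$. On this set the first component of $exp_h^{\tilde\Gamma}\circ\iota$ is $\varepsilon(\tau(v))$ --- not constant, but \emph{redundant}, because it can be recovered from the second component: the trajectory starting at $v\in A_{\tau(v)}G$ stays in $\alpha^{-1}(\tau(v))$, so $\alpha\circ exp_h^{\Gamma}=\tau$ on $\tilde{\mathcal U}\cap AG$. Injectivity of $T_v exp_h^{\Gamma}$ then follows: if $T_v exp_h^{\Gamma}(X_v)=0$, composing with $T\alpha$ gives $T_v\tau(X_v)=0$, hence the differential of the first component $\varepsilon\circ\tau$ also annihilates $X_v$; so the full differential of $exp_h^{\tilde\Gamma}\circ\iota$ annihilates $X_v$, and $X_v=0$ since $exp_h^{\tilde\Gamma}$ is a diffeomorphism and $\iota$ an embedding. (The paper then performs one more shrinking, choosing $U=U'\cap\alpha^{-1}(W)$ with $\varepsilon(W)\subseteq U'$ so that $\varepsilon(\alpha(U))\subseteq U$; some such adjustment is needed for part (ii) to apply to every $g\in U$, and it is absent from your proposal.) As written, your argument proves only a fibrewise statement and does not yield the theorem.
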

\begin{proof}
Using Theorem \ref{caso.Vpi}, we deduce that there exists a sufficiently small positive number $h >0$, an open subset
$\tilde{\mathcal U}$ in $V\alpha$, with $v_{m_0} \in \tilde{\mathcal U}$, and open subsets $V, \tilde{V}$ of $G$, 
with $\varepsilon(m_0) \in V \subseteq \tilde{V}$, such that:
\begin{enumerate}
\item
The exponential map of $\tilde{\Gamma}$ at $h$
\[
exp_{h}^{\tilde{\Gamma}}: \tilde{\mathcal U} \subseteq V\alpha \to (V \times V) \cap G\alpha \subseteq G \times G, \; \; \tilde{v} \to (\tau_{V\alpha}(\tilde{v}), \tau_{V\alpha}(\Phi^{\tilde{\Gamma}}_{h}(\tilde{v}))),
\]
is a diffeomorphism.
\item
For every $g, g' \in (V \times V)\cap G\alpha$, there exists a unique trajectory $\sigma_{gg'}: [0, h] \to \alpha^{-1}(\alpha(g))$ of $\tilde{\Gamma}$ in $\tilde{U} \cap \alpha^{-1}(\alpha(g))$ satisfying the following conditions
\[
\sigma_{gg'}(0) = g, \; \; \; \sigma_{gg'}(h) = g', \; \; \; \dot{\sigma}_{gg'}(0) \in \tilde{\mathcal U}.
\]
\end{enumerate}
Now, we take the open subset $\tilde{\mathcal U} \cap AG$ of $AG$. It is clear that $v_{m_0} \in \tilde{\mathcal U}\cap AG$. 

Denote by $\iota: AG \to V\alpha$ the canonical inclusion. Then, the exponential map $exp_h^{\Gamma}: \tilde{\mathcal U}\cap AG \subseteq AG \to G$ is given by
\[
exp^{\Gamma}_h = pr_2 \circ exp_{h}^{\tilde{\Gamma}} \circ \iota,
\]
where $pr_2: (V\times V)\cap G\alpha \to V\subseteq G$ is the canonical projection on the second factor. In fact,
\begin{equation}\label{expresion-expo-tilde}
exp_{h}^{\tilde{\Gamma}}(\iota(v)) = (\varepsilon(\tau(v)), exp_{h}^{\Gamma}(v)).
\end{equation}
Next, we will see that the map $exp_h^{\Gamma}: \tilde{\mathcal U} \cap AG \to G$ is a local diffeomorphism.
Suppose that $X_v \in T_v(\tilde{\mathcal U} \cap AG)$, with $v \in \tilde{\mathcal U} \cap AG$, and
\[
0 = (T_v exp_h^{\Gamma})(X_v) = (T_v(\tau_{V\alpha} \circ \Phi_{h}^{\tilde{\Gamma}}))(X_v).
\]
This implies that
\[
(T_v(\alpha \circ \tau_{V\alpha} \circ \Phi_{h}^{\tilde{\Gamma}}))(X_v) = 0.
\]
But, since the trajectory of $\tilde{\Gamma}$ over a point $m$ of $\tau(\tilde{\mathcal U}\cap AG)$ is contained in the fiber
$\alpha^{-1}(m)$, we deduce that
\[
\alpha \circ \tau_{V\alpha} \circ \Phi_{h}^{\tilde{\Gamma}} \circ \iota = \tau.
\]
Thus, we have that
\[
(T_v \tau)(X_v) = 0.
\]
Therefore, from (\ref{expresion-expo-tilde}), we deduce that
\[
(T_v(exp_h^{\tilde{\Gamma}} \circ \iota)) (X_v) = 0
\]
and it follows that $X_v = 0$.

We conclude that there exists an open subset ${\mathcal U}' \subseteq AG$, with $v_{m_0} \in {\mathcal U}'$, and an open
subset $U' \subseteq G$, such that $\varepsilon (m_0) \in U' \subseteq \tilde{V}$ and
\[
exp_h^{\Gamma}: {\mathcal U}' \subseteq AG \to U' \subseteq G
\]
is a diffeomorphism.

Next, using that $\varepsilon: M \to G$ is a continuous map, we have that there exists an open subset $W$ of $M$ such that 
$m_0 \in W$ and $\varepsilon(W) \subseteq U'$. So, $\alpha^{-1}(W)$ and $U = U' \cap \alpha^{-1}(W)$ are open subsets of $G$ and
\[
\varepsilon (m_0) \in U \subseteq \tilde{V}, \; \; \varepsilon(\alpha(U))\subseteq U.
\]
Thus, we may take
\[
{\mathcal U} = (exp_h^{\Gamma})^{-1}(U) \subseteq AG, \; \; \; \; \tilde{U} = \tilde{V}
\]
and (i) and (ii) in the theorem hold.

Finally, using that the \sode\ $\tilde{\Gamma}$ is $\Psi$-related with the \sode\  $\Gamma$, we deduce the last
part of the theorem.
\end{proof}
\begin{remark}
The conditions satisfied by the curves $\sigma_{\varepsilon(\alpha(g))g}$ and $a_{\varepsilon(\alpha(g))g}$ in the previous theorem can be interpreted
in terms of $AG$-homotopy of paths (see~\cite{CrFe} for the
definitions). Indeed, if we reparametrize the curve $a_{\varepsilon(\alpha(g))g}$ and define
the curve $\bar{a}_{\varepsilon(\alpha(g))g}: [0,1] \to AG$ by $\bar{a}_{\varepsilon(\alpha(g))g}(s)=ha_g(sh)$, and
similarly we reparametrize $\sigma_{\varepsilon(\alpha(g))g}$ and define
$\bar{\sigma}_{\varepsilon(\alpha(g))g}: [0,1] \to G$ by $\bar{\sigma}_{\varepsilon(\alpha(g))g}(s)=\sigma_{\varepsilon(\alpha(g))g}(sh)$,
then the curve $\bar{a}_{\varepsilon(\alpha(g))g}$ is an $AG$-path in the $AG$-homotopy
class defined by the element   $g\in G$. Indeed, it is clear that
$\bar{a}_{\varepsilon(\alpha(g))g}(t)=Tl_{\bar{\sigma}_{\varepsilon(\alpha(g(t)))g(t)^{-1}}}(\frac{d \bar{\sigma}_{\varepsilon(\alpha(g))g}}{dt}_{|t})$, and
that $\bar{\sigma}_{\varepsilon(\alpha(g))g}(0)=\varepsilon(\alpha(g))$ and $\bar{\sigma}_{\varepsilon(\alpha(g))g}(1)=g$.
 \end{remark}
We will denote by
\[
R^{e^-}_h: U \subseteq G \to {\mathcal U}\subseteq AG \mbox{ and } R^{e^+}_h: U \subseteq G \to \Phi^{\Gamma}_h({\mathcal U})\subseteq AG
\]
the inverse maps of the diffeomorphisms $exp^{\Gamma}_h: {\mathcal U} \to U$  and $exp^{\Gamma}_h \circ \Phi^{\Gamma}_{-h}: \Phi^{\Gamma}_{h}({\mathcal U}) \to U$, respectively. They are the exact retraction maps associated with $\Gamma$ at $h$.

Note that
\begin{equation}\label{retraction-flow1}
R^{e^-}_h(g) = a_{\varepsilon(\alpha(g))g}(0) = \dot{\sigma}_{\varepsilon(\alpha(g))g}(0)
\end{equation}
and
\begin{equation}\label{retraction-flow2}
R^{e^+}_h(g) = \Phi^{\Gamma}_h(R^{e^-}_h(g)) = a_{\varepsilon(\alpha(g))g}(h) = T_g l_{g^{-1}}(\dot{\sigma}_{\varepsilon(\alpha(g))g}(h)).
\end{equation}
The following diagram illustrates the situation
\[
\xymatrix{ {\mathcal U}\subseteq AG\ar@/^/[rrr]^{exp^{\Gamma}_{h_0}}
\ar[dd]_{\Phi_{h_0}^\Gamma} &   & &U\subseteq G \ar@/^/[lll]^{R^{e^-}_{h_0}}\ar[ddlll]^{{R^{e^+}_{h_0}}}\\
  &  & &\\
\Phi^{\Gamma}_{h_0}({\mathcal U})  \subseteq AG&  & & }
\]

\subsection{The general case}

In the general case, when we have a general Lie algebroid $(E, \lcf\; ,\rcf, \rho)$, it is possible to construct a local Lie groupoid $G$ integrating this Lie algebroid. This groupoid is local in the sense that the product is not necessarily defined on $G_2$, but only locally defined near the identity section  (see \cite{CrFe} for details). In any case, Theorem \ref{convexity-definitivo} is a local result for points near of the identities and, therefore,  it remains valid for general Lie algebroids.

\section{The exact discrete Lagrangian function in the Lie groupoid setting and error analysis}\label{sec5}

\subsection{The exact discrete Lagrangian function in the Lie groupoid setting}\label{exact-discrete-Lie-groupoid}

In this section, we will introduce the exact discrete Lagrangian function associated
with a regular continuous Lagrangian function on a Lie algebroid $A$.

This construction is local and, thus, we will assume that $A$ is the Lie algebroid $AG$ associated 
with a Lie groupoid $G$ over $M$. As in the previous sections, we will denote by $\alpha$ the source
map of $G$.

Now, let $L: AG \to \R$ be a hyperregular continuous Lagrangian function on $AG$ and $\Gamma_L$ the Euler-Lagrange 
vector field on $AG$ associated with $L$.  $\Gamma_L$ is a second order differential equation 
on $AG$. Thus, if we fix a point $m \in M$ then, using Theorem \ref{convexity-definitivo}, we may find 
a sufficiently small positive number $h > 0$, an open subset $U$ in $G$, with $\varepsilon(m) \in U$ and an open subset
${\mathcal U}\subseteq AG$ such that the exponential map $exp^{\Gamma}_h: {\mathcal U} \to U$ associated with $\Gamma$ at $h$
is a diffeomorphism.

We will denote by $R^{e^-}_h: U \subseteq G \to {\mathcal U} \subseteq AG$ and $R^{e^+}_h: U \subseteq G \to \Phi^{\Gamma}_h({\mathcal U}) \subseteq AG$
the exact retraction maps associated with $\Gamma$ at $h$.

Then, we will define the exact discrete Lagrangian function $\mathbb{L}_{h}^{e}$ associated with $L$ in the open subset $U$ as follows
\begin{equation}\label{exact-lagrangian}
\mathbb{L}_h^{e}(g) = \int_0^h L(\Phi^{\Gamma_L}_t(R^{e^-}_h(g)) dt, \;\; \mbox{ for } g \in U.
\end{equation}
Using (\ref{retraction-flow1}) and the same notation as in Theorem \ref{convexity-definitivo}, we have that
\begin{equation}\label{exact-lagrangian-1}
\mathbb{L}_h^{e}(g) = \int_0^h L(a_{\varepsilon(\alpha(g))g}(t)) dt. 
\end{equation}
Since we assume that $L$ is hyperregular, we may consider
the corresponding Hamiltonian function $H: A^*G \to \R$. Moreover, the \sode\  vector field $\Gamma_L$ on $AG$ and the corresponding Hamiltonian
vector field $X_H$ on $A^*G$ are ${\mathcal F}L$-related, where ${\mathcal F}L: AG \to A^*G$ is the Legendre transformation 
associated with $L$.

Then, the aim of this section is to prove the following result.
\begin{theorem}\label{exact-discrete-hamiltonian-flow}
Let $L: AG \to \R$ be a hyperregular Lagrangian function on the Lie algebroid of a Lie groupoid $G$ over $M$. Let $m$ be a point of $M$,
$h >0$  a sufficiently small positive number, $U\subseteq G$ an open subset of $G$, with $\varepsilon(m) \in U$, and $\mathbb{L}_h^{e}: U \subseteq G \to \R$
the exact discrete Lagrangian function associated with $L$. Then:
\begin{enumerate}
\item
$\mathbb{L}_h^{e}$ is a regular discrete Lagrangian function.
\item
If $H: A^*G \to \R$ is the Hamiltonian function associated with the hyperregular Lagrangian function $L$ and $\Phi^{X_H}$ is the (local)
flow of the Hamiltonian vector field $X_H$ on $A^*G$, we have that
\[
\mathbb{F}^+\mathbb{L}^{e}_h = \Phi^{X_H}_h \circ \mathbb{F}^-\mathbb{L}^{e}_h,
\]
where $\mathbb{F}^+\mathbb{L}^{e}_h$ and $\mathbb{F}^-\mathbb{L}^{e}_h$ are the discrete Legendre transformations 
associated with $\mathbb{L}_h^{e}$.
In other words, the Hamiltonian evolution operator $\mathbb{F}^+\mathbb{L}^{e}_h \circ (\mathbb{F}^-\mathbb{L}^{e}_h)^{-1}$ 
associated with the exact discrete Lagrangian function $\mathbb{L}^{e}_h$ is just the flow of $X_H$ at time $h$ or, equivalently, the following diagram is commutative:
\end{enumerate}
\end{theorem}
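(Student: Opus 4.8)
The plan is to reduce part (2) to the two endpoint identities
\[
\mathbb{F}^{-}\mathbb{L}_h^{e} = {\mathcal F}L\circ R^{e^-}_h \qquad\text{and}\qquad \mathbb{F}^{+}\mathbb{L}_h^{e} = {\mathcal F}L\circ R^{e^+}_h,
\]
which assert that the two discrete Legendre transforms of $\mathbb{L}_h^{e}$ are nothing but the continuous Legendre transform ${\mathcal F}L$ evaluated at the initial and final velocities $R^{e^-}_h(g)=a_{\varepsilon(\alpha(g))g}(0)$ and $R^{e^+}_h(g)=a_{\varepsilon(\alpha(g))g}(h)$ of the unique extremal attached to $g$. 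Granting these, the rest is formal: since $R^{e^+}_h=\Phi^{\Gamma_L}_h\circ R^{e^-}_h$ and $\Gamma_L$ is ${\mathcal F}L$-related to $X_H$ (so that the diffeomorphism ${\mathcal F}L$ conjugates the flows, ${\mathcal F}L\circ\Phi^{\Gamma_L}_t=\Phi^{X_H}_t\circ{\mathcal F}L$), one obtains
\[
\mathbb{F}^{+}\mathbb{L}_h^{e}={\mathcal F}L\circ\Phi^{\Gamma_L}_h\circ R^{e^-}_h=\Phi^{X_H}_h\circ{\mathcal F}L\circ R^{e^-}_h=\Phi^{X_H}_h\circ\mathbb{F}^{-}\mathbb{L}_h^{e},
\]
which is the asserted formula, and composing on the right with $(\mathbb{F}^{-}\mathbb{L}_h^{e})^{-1}$ identifies the Hamiltonian evolution operator with $\Phi^{X_H}_h$. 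Part (1) is then immediate, because ${\mathcal F}L\circ R^{e^-}_h$ is the composition of the diffeomorphism $R^{e^-}_h$ of Theorem \ref{convexity-definitivo} with the diffeomorphism ${\mathcal F}L$ (hyperregularity), hence a local diffeomorphism, so $\mathbb{L}_h^{e}$ is regular.

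Thus the whole content lies in the two endpoint identities, which I would prove by a first variation (``fundamental theorem of the calculus of variations'') argument. I treat $\mathbb{F}^{+}$ first. For $v\in A_{\beta(g)}G$ pick a curve $c(s)\in\alpha^{-1}(\beta(g))$ with $c(0)=\varepsilon(\beta(g))$ and $\dot c(0)=v$; by (\ref{DLt+}), $\mathbb{F}^{+}\mathbb{L}_h^{e}(g)(v)=\frac{d}{ds}\big|_{0}\mathbb{L}_h^{e}(g\cdot c(s))$. The elements $g(s):=g\cdot c(s)$ stay in the source fibre $P:=\alpha^{-1}(\alpha(g))$ with fixed source, so the attached extremals $\sigma_s:=\sigma_{\varepsilon(\alpha(g))\,g(s)}$ form a family of trajectories of $\tilde\Gamma|_{TP}$ with fixed left endpoint $\sigma_s(0)=\varepsilon(\alpha(g))$ and moving right endpoint $\sigma_s(h)=g(s)$. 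On $P$ one has $V\alpha|_P=TP$ and $\tilde L|_{TP}:=(L\circ\Psi)|_{TP}$ is an ordinary Lagrangian whose Euler--Lagrange \sode\ is exactly $\tilde\Gamma|_{TP}$ (using that $\tilde\Gamma=\Gamma_{L\circ\Psi}$, since $\Psi$ is a fibrewise bijective morphism carrying $L\circ\Psi$ to $L$, together with the fact that on $V\alpha$ the structure functions reduce equations (\ref{Euler-Lagrange-Lie-algebroid}) to the standard ones along each fibre). Hence the $\sigma_s$ are genuine extremals, the interior Euler--Lagrange term vanishes, and the classical fixed-initial-point first variation formula gives $\frac{d}{ds}\big|_{0}\mathbb{L}_h^{e}(g(s))=\langle{\mathcal F}(\tilde L|_{TP})(\dot\sigma(h)),\,Tl_g(v)\rangle$. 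Finally the chain-rule identity $\langle{\mathcal F}(\tilde L|_{TP})(w),Tl_g(v)\rangle=\langle{\mathcal F}L(\Psi(w)),\Psi(Tl_g(v))\rangle$ (from $\tilde L=L\circ\Psi$ and fibrewise linearity of $\Psi$), combined with $\Psi(Tl_g(v))=Tl_{g^{-1}}Tl_g(v)=v$ and $\Psi(\dot\sigma(h))=R^{e^+}_h(g)$, yields $\mathbb{F}^{+}\mathbb{L}_h^{e}(g)(v)=\langle{\mathcal F}L(R^{e^+}_h(g)),v\rangle$, which is the second endpoint identity.

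The identity for $\mathbb{F}^{-}$ is the mirror computation. For $v\in A_{\alpha(g)}G$ one differentiates $\mathbb{L}_h^{e}\circ r_g\circ i$ as in (\ref{DLt-}); the variation $g(s)=i(c(s))\cdot g$ now keeps the target $\beta(g)$ fixed and moves the source, so the competing extremals have fixed right endpoint and moving left endpoint, the surviving boundary term is the initial momentum ${\mathcal F}L(R^{e^-}_h(g))$, and the minus sign in (\ref{DLt-}) is precisely matched by the sign of the left-endpoint boundary contribution. The step I expect to be the main obstacle is exactly here: the competing extremals for $\mathbb{F}^{-}$ lie in different source fibres $\alpha^{-1}(\alpha(g(s)))$, so the naive fibrewise variation picks up an extra term from the motion of the base point. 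The clean remedy is to run the first variation intrinsically on $AG$ (equivalently in the $AG$-homotopy picture of Crainic--Fernandes already used after Theorem \ref{convexity-definitivo}), where $a_s=\Psi\circ\dot\sigma_s$ is a genuine admissible variation generated by a time-dependent section $\eta(t)$ of $AG$ along $\tau\circ a$, and the boundary term reads $\langle{\mathcal F}L(a(h)),\eta(h)\rangle-\langle{\mathcal F}L(a(0)),\eta(0)\rangle$. Everything then reduces to pinning down the endpoint values of $\eta$: $\eta(0)=0,\ \eta(h)=v$ for the $\mathbb{F}^{+}$ variation and $\eta(h)=0,\ \eta(0)=v$ for the $\mathbb{F}^{-}$ one, which I would read off from the variation-of-holonomy relation $\frac{d}{ds}\big|_{0} g(s)=\lvec{\eta(h)}(g)-\rvec{\eta(0)}(g)$ together with the fact that our extremals always issue from an identity. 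Carrying out this endpoint bookkeeping and matching the sign conventions of (\ref{DLt-})--(\ref{DLt+}) is the technical heart; once the two endpoint identities are established the theorem follows as in the first paragraph.
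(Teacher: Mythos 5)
Your proposal shares the paper's top-level architecture exactly: everything is reduced to the two endpoint identities $\F^-\mathbb{L}^{e}_h=\calf L\circ R^{e-}_h$ and $\F^+\mathbb{L}^{e}_h=\calf L\circ R^{e+}_h$ (these constitute the paper's Theorem \ref{Legendre-transformation-cont-discrete}), and then parts (i) and (ii) follow by the identical formal computation using $R^{e+}_h=\Phi^{\Gamma_L}_h\circ R^{e-}_h$ and $\calf L\circ\Phi^{\Gamma_L}_h=\Phi^{X_H}_h\circ\calf L$. Where you genuinely differ is in how the endpoint identities are established. Your $\F^+$ argument---a classical fixed-initial-point first variation carried out inside the single source fibre $\alpha^{-1}(\alpha(g))$, followed by the chain-rule identity $\langle\calf\tilde{L}(w),u\rangle=\langle\calf L(\Psi(w)),\Psi(u)\rangle$ and $\Psi(T l_g(v))=v$---is complete and correct, and is in fact a shortcut the paper does not take. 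For $\F^-$ the paper uses a device that dissolves precisely the obstacle you flag: it proves \emph{both} identities on the groupoid $G\alpha\rightrightarrows G$, where a point is a pair $(g_0,g_1)$ and the $\F^-$-variation $(g_0,g(s))^{-1}(g_0,g_1)=(g(s),g_1)$ moves the left endpoint while keeping every competing extremal inside the one fixed fibre $\alpha^{-1}(\alpha(g_0))$, so that both discrete Legendre transforms become classical one-endpoint variations in a fixed manifold (handled via the first-variation formula of \cite{Ma}); it then descends to $G$ along the surjective morphism $\psi(g_0,g_1)=g_0^{-1}g_1$ using Propositions \ref{discrete-continuous-retraction}, \ref{morphisms.Lagrangian} and \ref{discrete-morphisms-Legendre.and.Cartan}.

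Your alternative for $\F^-$---an intrinsic first variation on $AG$ in the $A$-homotopy formalism---is viable, and your endpoint bookkeeping is right: with the conventions (\ref{linv})--(\ref{rinv}) one has $\frac{d}{ds}g(s)\at{s=0}=\lvec{\eta(h)}(g)-\rvec{\eta(0)}(g)$, and for $g(s)=c(s)^{-1}g$ this derivative equals $-\rvec{v}(g)$, consistent with $\eta(0)=v$, $\eta(h)=0$, which produces the correct sign against (\ref{DLt-}). But, as you yourself acknowledge, this rests on an unproved lemma: that the family of extremals attached to $g(s)$, which live in the \emph{moving} fibres $\alpha^{-1}(\beta(c(s)))$, induces an admissible variation (i.e.\ $\Upsilon=a\,dt+\eta\,ds$ a Lie algebroid morphism from $T\R^2$ to $AG$) with those endpoint values. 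Since $\partial_s\sigma_s$ is no longer $\alpha$-vertical, $\eta$ cannot be obtained by naive left translation; one must correct by a gauge term of the form $\rvec{\xi(s)}$ with $\xi(s)=T l_{c(s)^{-1}}\dot{c}(s)$ and then verify the morphism property and the variation-of-holonomy relation---a genuine piece of work, essentially equivalent in content to what the paper's $G\alpha$-plus-descent machinery accomplishes. So: same skeleton and same key lemma; a correct and more direct treatment of $\F^+$; and a different, workable but unfinished route for $\F^-$, where the paper's trick of passing to $G\alpha$ is the cleaner resolution.
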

\[
\xymatrix{ &   & &A^*G\ar[dd]^{\Phi_h^{X_H}}&&AG\ar[dd]^{\Phi_h^{\Gamma_L}}\ar[ll]^{{\mathcal F}L}\\
  U\subseteq G\ar[urrr]^{\mathbb{F}^-\mathbb{L}^{e}_h}\ar[drrr]^{\mathbb{F}^+\mathbb{L}^{e}_h}
  &  & & &&\\
 &  & &A^*G&&AG\ar[ll]^{{\mathcal F}L}\ }
\]

To simplify the reading, in what follows we will omit any reference to the domain of the maps that we are going to consider, and we will just indicate the spaces where they are defined. 

Le $\Psi: V\alpha \to AG$ be the  vector bundle morphism over $\beta: G \to M$ between the vector bundles $V\alpha$ and $AG$ and $\tilde{L}$ the Lagrangian function on $V\alpha$ given by
\[
\tilde{L} = L \circ \Psi.
\]
Since $\Psi$ is a fiberwise isomorphism of vector bundles, we deduce that it induces a new fiberwise isomorphism
of vector bundles $(\Psi^{-1})^*: V^{*}\alpha \to A^*G$ over $\beta: G \to M$. In fact, 
\[
((\Psi^{-1})^*(\gamma_g))(v_{\beta(g)}) = \gamma_g((\Psi_g^{-1})(v_{\beta(g)}))
\]
for $\gamma_g \in V_g^{*}\alpha$ and $v_{\beta(g)} \in A_{\beta(g)}G$. In addition, using that $\Psi$ is a Lie algebroid
morphism, it follows that $(\Psi^{-1})^*$ is a Poisson fiberwise isomorphism between the Poisson manifolds  $V^*\alpha$ and $A^*G$.

Next, let ${\mathcal F}\tilde{L}: V\alpha \to V^*\alpha$ be the Legendre transformation 
associated with $\tilde{L}$. Then, using some results in \cite{CoLeMaMa} (see Theorem 7.6 in \cite{CoLeMaMa}), we deduce that
\begin{equation}\label{Legendre-transformations-Psi}
(\Psi^{-1})^* \circ {\mathcal F}\tilde{L} = {\mathcal F}L \circ \Psi. 
\end{equation}
Thus, from (\ref{Legendre-transformations-Psi}) and the fact that ${\mathcal F}\tilde{L}: V\alpha \to V^*\alpha$ is a fibered map 
with respect to the vector bundle projections $\tau_{V\alpha}: V\alpha \to G$ and $\tau_{V^*\alpha}: V^*\alpha \to G$, we conclude that 
${\mathcal F}\tilde{L}$ is a diffeomorphism and $\tilde{L}$ is a hyperregular Lagrangian function.

Therefore, we have the following objects:
\begin{enumerate}
\item
The \sode\ vector field $\Gamma_{\tilde{L}}$ associated with the hyperregular Lagrangian function $\tilde{L}$;
\item
The corresponding Hamiltonian function $\tilde{H}: V^*\alpha \to \R$ and the Hamiltonian vector field $X_{\tilde{H}}$ on $V^*\alpha$ and
\item
The discrete
exact Lagrangian function $\widetilde{\mathbb{L}}_h^{e}: G\alpha \to \R$ on the groupoid $G\alpha$ associated with the fibration $\alpha$. 
\end{enumerate}
Since $L$ and $\tilde{L}$ are $\Psi$-related, it follows that $H$ and $\tilde{H}$ are $(\Psi^{-1})^*$-related, which implies that
the Hamiltonian vector fields $X_H$ and $X_{\tilde{H}}$ are also $(\Psi^{-1})^*$-related (note that $(\Psi^{-1})^*$ is a Poisson morphism). Thus,
using (\ref{Legendre-transformations-Psi}), we conclude that $\Gamma_{\tilde{L}}$ is the unique \sode\  on $V\alpha$ which is $\Psi$-related with the \sode\  $\Gamma_L$ on $AG$ (see Proposition \ref{rel-SODES}).

On the other hand, note that $\Psi: V\alpha \to AG$ is just the Lie algebroid morphism associated with the Lie groupoid morphism
$\psi: G\alpha \to G$ over $\beta: G \to M$ given by
\begin{equation}\label{morph-Lie-group}
\psi(g, g') = g^{-1}g', \; \; \; \mbox{ for } (g, g') \in G\alpha.
\end{equation}
The following diagram illustrates this situation
\[
\xymatrix{ G\alpha\subset G\times G
\ar@<-0.5ex>[dd]_{pr_1|_{G\alpha}} \ar@<0.5ex>[dd]^{pr_2|_{G\alpha}}\ar[rr]^{\psi}&    &G \ar@<-0.5ex>[dd]_{\alpha} \ar@<0.5ex>[dd]^{\beta}\\
  &  & \\
G\ar[rr]^{\beta}&  & M }
\]

In conclusion, summarizing, we have the following objects:
\begin{itemize} 
\item
The Lie groupoid morphism $\psi: G\alpha \to G$ over $\beta: G \to M$ whose Lie algebroid morphism $A\psi$ is just 
the fiberwise isomorphism $\Psi: V\alpha \to AG$ between the vector bundles $V\alpha$ and $AG$;
\item
The hyperregular Lagrangian functions $L: AG \to \R$ and $\tilde{L}: V\alpha \to \R$ and the corresponding \sode\ vector fields $\Gamma_L$ and $\Gamma_{\tilde{L}}$ which 
are $\Psi$-related; 
\item
The exact discrete Lagrangian functions $\widetilde{\mathbb{L}}_h^{e}: G\alpha \to \R$ and $\mathbb{L}_h^{e}: G \to \R$ on the Lie groupoids $G\alpha$ and $G$, respectively.
\end{itemize}
Then, in order to prove Theorem \ref{exact-discrete-hamiltonian-flow}, we will find the relation between the Legendre transformation ${\mathcal F}L$ associated with
$L$ and the discrete Legendre transformations $\mathbb{F}^-\mathbb{L}^{e}_h$ and $\mathbb{F}^-\mathbb{L}^{e}_h$ associated with $\mathbb{L}^{e}_h$. For this purpose, we will discuss firstly the problem for the Lagrangian function $\tilde{L}$ and then by reduction, using $\Psi$, we will deduce the general result. In fact, we will prove some previous results in a more general setting. 

\begin{proposition}
\label{discrete-continuous-retraction}
Let $\tilde{G}\rightrightarrows \tilde{M}$ and $G\rightrightarrows M$ be Lie groupoids with Lie algebroids $\tilde{\tau}: A\tilde{G} \to \tilde{M}$ and $\tau: AG \to M$, respectively. Let $\psi: \tilde{G} \to G$ be a morphism of Lie groupoids over a map $\map{\varphi }{\tilde{M}}{M}$, and  $\map{\Psi }{A\tilde{G}}{AG}$ the induced Lie algebroid morphism. Suppose that $\tilde{\Gamma} $ and $\Gamma$ are \sode{}s on $A\tilde{G}$ and $AG$, respectively, which are $\Psi $-related, that is, $T\Psi \circ \tilde{\Gamma} =\Gamma \circ \Psi $. Then 
\[
\Psi \circ \tilde{R}^{e-}_h= R^{e-}_h\circ\psi 
\qquad\text{and}\qquad
\Psi \circ \tilde{R}^{e+}_h=R^{e+}_h\circ\psi, 
\]  
where $R^{e-}_h$ and $R^{e+}_h$ (respectively, $\tilde{R}^{e-}_h$ and $\tilde{R}^{e+}_h$) are the exact retraction maps associated with $\Gamma$ (respectively, $\tilde{\Gamma}$).
\end{proposition}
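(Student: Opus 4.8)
The plan is to reduce both identities to the single intertwining property
\[
exp^{\Gamma}_h\circ\Psi=\psi\circ exp^{\tilde\Gamma}_h
\]
of the exponential maps of Theorem \ref{convexity-definitivo}, and then to invert. Granting it, for $\tilde g$ in the relevant domain put $\tilde v=\tilde R^{e-}_h(\tilde g)$, so that $exp^{\tilde\Gamma}_h(\tilde v)=\tilde g$; the intertwining gives $exp^{\Gamma}_h(\Psi(\tilde v))=\psi(\tilde g)$, and composing with $R^{e-}_h=(exp^{\Gamma}_h)^{-1}$ yields $\Psi\circ\tilde R^{e-}_h=R^{e-}_h\circ\psi$. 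The statement for $R^{e+}$ is then formal: since $\tilde\Gamma$ and $\Gamma$ are $\Psi$-related, their flows on $A\tilde G$ and $AG$ are conjugate, $\Psi\circ\Phi^{\tilde\Gamma}_t=\Phi^{\Gamma}_t\circ\Psi$, whence, using $R^{e+}_h=\Phi^{\Gamma}_h\circ R^{e-}_h$ and its tilde analogue,
\[
\Psi\circ\tilde R^{e+}_h=\Phi^{\Gamma}_h\circ\Psi\circ\tilde R^{e-}_h=\Phi^{\Gamma}_h\circ R^{e-}_h\circ\psi=R^{e+}_h\circ\psi .
\]
All equalities are understood on the common open sets where the maps are defined.

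To prove the intertwining I would pass to the vertical bundles of the source maps. Write $\Psi_\alpha\colon V\alpha\to AG$, $v_g\mapsto Tl_{g^{-1}}v_g$, and $\Psi_{\tilde\alpha}\colon V\tilde\alpha\to A\tilde G$ for the canonical fiberwise isomorphisms, and let $\Gamma^V$ on $V\alpha$ (resp.\ $\tilde\Gamma^V$ on $V\tilde\alpha$) be the \sode\ that is $\Psi_\alpha$-related to $\Gamma$ (resp.\ $\Psi_{\tilde\alpha}$-related to $\tilde\Gamma$). Since $\psi$ is a groupoid morphism over $\varphi$ one has $\alpha\circ\psi=\varphi\circ\tilde\alpha$, so $T\psi$ restricts to a bundle map $V\psi\colon V\tilde\alpha\to V\alpha$ covering $\psi$, i.e.\ $\tau_{V\alpha}\circ V\psi=\psi\circ\tau_{V\tilde\alpha}$; and because $\psi$ intertwines left translations, $\psi\circ l_{\tilde g^{-1}}=l_{\psi(\tilde g)^{-1}}\circ\psi$, while $\Psi=T\psi|_{A\tilde G}$, so taking tangent maps shows that the square
\[
\Psi_\alpha\circ V\psi=\Psi\circ\Psi_{\tilde\alpha}
\]
commutes.

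The heart of the matter is to deduce that $\tilde\Gamma^V$ is $V\psi$-related to $\Gamma^V$. Applying $T\Psi_\alpha$ to both $TV\psi(\tilde\Gamma^V(w))$ and $\Gamma^V(V\psi(w))$ and using the commuting square together with the three relatedness hypotheses, one checks that both are sent to $\Gamma(\Psi_\alpha(V\psi(w)))$. The two vectors live in $T(V\alpha)$ over the same point, and their difference $\delta$ is $\tau_{V\alpha}$-vertical, since the \sode\ conditions give $T\tau_{V\alpha}(\Gamma^V(V\psi(w)))=V\psi(w)=T\psi(w)=T\tau_{V\alpha}(TV\psi(\tilde\Gamma^V(w)))$. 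The main obstacle is that $T\Psi_\alpha$ is not injective (its base map $\beta$ is not a diffeomorphism), so one cannot cancel outright; the remedy is that $T\Psi_\alpha$ is injective on $\tau_{V\alpha}$-vertical vectors, because for a fiberwise linear and fiberwise injective bundle map the tangent map carries a vertical lift $\xi^V(a,b)$ to $\xi^V(\Psi_\alpha(a),\Psi_\alpha(b))$, which vanishes only for $b=0$. Hence $\delta=0$, i.e.\ $TV\psi\circ\tilde\Gamma^V=\Gamma^V\circ V\psi$.

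Finally, $V\psi$-relatedness of the lifts yields flow conjugacy $V\psi\circ\Phi^{\tilde\Gamma^V}_t=\Phi^{\Gamma^V}_t\circ V\psi$. Combining this with $\tau_{V\alpha}\circ V\psi=\psi\circ\tau_{V\tilde\alpha}$, with the compatibility $V\psi\circ\iota_{\tilde G}=\iota_G\circ\Psi$ of the inclusions of $A\tilde G$ and $AG$ into the respective vertical bundles along the identities, and with the formula $exp^{\Gamma}_h(v)=\tau_{V\alpha}(\Phi^{\Gamma^V}_h(\iota_G(v)))$ from Theorem \ref{convexity-definitivo}, one obtains $exp^{\Gamma}_h\circ\Psi=\psi\circ exp^{\tilde\Gamma}_h$, closing the argument. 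Everything but the verticality-plus-injectivity step used to obtain the $V\psi$-relatedness of the lifts is a formal chase of related maps and their flows.
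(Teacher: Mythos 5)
Your proposal is correct, and on the $R^{e+}$ half it coincides with the paper's argument: both derive it formally from $R^{e+}_h=\Phi^{\Gamma}_h\circ R^{e-}_h$ and the flow conjugacy $\Psi\circ\Phi^{\tilde{\Gamma}}_t=\Phi^{\Gamma}_t\circ\Psi$ coming from $\Psi$-relatedness. For the $R^{e-}$ half, however, your route is genuinely different. The paper argues at the level of trajectories: $\tilde{R}^{e-}_h(\tilde{g})$ is the initial velocity of the unique trajectory $\sigma$ of the lifted \sode\ on $V\tilde{\alpha}$ joining $\tilde{\varepsilon}(\tilde{\alpha}(\tilde{g}))$ to $\tilde{g}$ in time $h$; it then asserts that $\psi\circ\sigma$ is a trajectory of the lifted \sode\ on $V\alpha$ with the correct endpoints, so that by the uniqueness clause of Theorems \ref{caso.Vpi} and \ref{convexity-definitivo} it must be the trajectory defining $R^{e-}_h(\psi(\tilde{g}))$, whence $\Psi(\dot{\sigma}(0))=T\psi(\dot{\sigma}(0))=R^{e-}_h(\psi(\tilde{g}))$. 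You instead prove the intertwining $exp^{\Gamma}_h\circ\Psi=\psi\circ exp^{\tilde{\Gamma}}_h$ of the exponential maps and then simply invert, never invoking two-point boundary-value uniqueness; the work is shifted to showing that the lifted \sode{}s on the vertical bundles are $V\psi$-related, which you establish by the verticality-plus-injectivity argument (the difference of the two candidate vectors is $\tau_{V\alpha}$-vertical, and $T\Psi_{\alpha}$ kills a vertical lift $\xi^V(a,b)$ only if $b=0$, since $\Psi_{\alpha}$ is fiberwise injective). It is worth noting that this $V\psi$-relatedness is precisely what the paper uses implicitly and without proof when it claims that $\psi\circ\sigma$ is again a trajectory of the lifted \sode\ downstairs; so your proof is longer but makes explicit a step the paper glosses over, while the paper's is shorter by leaning on the uniqueness statement in its convexity theorems. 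Both arguments share the same, explicitly acknowledged, informality about the open domains on which the maps are defined and inverted.
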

\begin{proof}
Denote by $\alpha$ and $\tilde{\alpha}$ the source maps of $G$ and $\tilde{G}$, respectively.
Let $\tilde{g}\in \tilde{G}$, and set $\tilde{m}=\tilde{\alpha} (\tilde{g})$. Denote by $\bar{\tilde{\Gamma}}$ and $\bar{\Gamma}$ the corresponding \sode\'s on $V\tilde{\alpha}$ and $V\alpha$, respectively. If $t \to \sigma_{\tilde{\varepsilon}(\tilde{m})\tilde{g}}(t)$ is the integral curve of $\bar{\tilde{\Gamma}}$ with $\sigma_{\tilde{\varepsilon}(\tilde{m})\tilde{g}}(0)=\tilde{\varepsilon}(\tilde{m})$ and $\sigma_{\tilde{\varepsilon}(\tilde{m})\tilde{g}}(h)=\tilde{g}$, then 
\[
\Psi(\tilde{R}^{e-}_h(\tilde{g}))=\Psi (\dot{\sigma}_{\tilde{\varepsilon}(\tilde{m})\tilde{g}}(0))=T\psi (\dot{\sigma}_{\tilde{\varepsilon}(\tilde{m})\tilde{g}}(0))=\frac{d}{dt}[\psi (\sigma_{\tilde{\varepsilon}(\tilde{m})\tilde{g}} (t))]\at{t=0}.
\]
On the other hand, if $\psi(\tilde{g}) = g$ and $m = \varphi(\tilde{m})$, we have that $R^{e-}_h(g)=\dot{\sigma}_{\varepsilon(m)g}(0)$, where $t \to \sigma_{\varepsilon(m)g}(t)$ is the integral curve of $\Gamma$ with $\sigma_{\varepsilon(m)g}(0)=\varepsilon(m)$ and $\sigma_{\varepsilon(m)g}(h)=g$. It follows that $\sigma_{\varepsilon(m)g}(t)=\psi (\sigma_{\tilde{\varepsilon}(\tilde{m})\tilde{g}} (t))$, because both are integral curves of $\bar{\Gamma}$ with the same initial value, $\varepsilon(m)=\psi (\tilde{\varepsilon} (\tilde{m}))$, and hence 
\[
R^{e-}_h(\psi (\tilde{g}))=\dot{\sigma}_{\varepsilon(m)g}(0)=\frac{d}{dt}[\psi (\sigma_{\tilde{\varepsilon}(\tilde{m})\tilde{g}}(t))]\at{t=0}.
\]
Therefore $\Psi (\tilde{R}^{e-}_h(\tilde{g}))=R^{e-}_h(\psi(\tilde{g}))$.

Finally, if $\Phi^{\tilde{\Gamma}}_h$ and $\Phi^{\Gamma}_h$ denote the local flows of the vector fields $\tilde{\Gamma}$ and $\Gamma$, respectively, then 
\begin{align*}
R^{e+}_h \circ \psi 
&=\Phi^{\Gamma}_h \circ R^{e-}_h\circ\psi \\
&=\Phi^{\Gamma}_h\circ \Psi \circ \tilde{R}^{e-}_h\\
&=\Psi \circ\Phi^{\tilde{\Gamma}}_h\circ\tilde{R}^{e-}_h\\
&=\Psi \circ \tilde{R}^{e+}_h\\
\end{align*}
which ends the proof.
\end{proof}
Next, we will assume that the Lie algebroid morphism $\Psi: A\tilde{G} \to AG$ is fiberwise bijective. Then, $\Psi$ induces, in a natural way,
a new fiberwise isomorphism of vector bundles
\[
(\Psi^{-1})^*: A^*\tilde{G} \to A^*G
\]
over $\varphi: \tilde{M} \to M$ between the dual bundles to $A\tilde{G}$ and $AG$, respectively.

Moreover, if $L: AG \to \R$ is a Lagrangian function on $AG$ and $\tilde{L} = L \circ \Psi$ then, using Theorem 7.6 in \cite{CoLeMaMa}, we deduce
that
\begin{equation}\label{Legendre-transformations-relation}
(\Psi^{-1})^* \circ {\mathcal F}\tilde{L} = {\mathcal F}L \circ \Psi,
\end{equation}
where ${\mathcal F}\tilde{L}: A\tilde{G} \to A^*\tilde{G}$ (respectively, ${\mathcal F}{L}: AG \to A^*G$) is the Legendre transformation associated with $\tilde{L}$ (respectively, $L$).

Now, using (\ref{Legendre-transformations-relation}) and proceeding as in the particular case when $\tilde{G}$ is the Lie groupoid associated with the source map $\alpha$ of $G$ and $\Psi$ is the canonical isomorphism between $V\alpha$ and $AG$, we also deduce that if $L$ is a regular Lagrangian function then:
\begin{enumerate}
\item
$\tilde{L}$ is also regular and
\item
The \sode\ vector fields $\Gamma_{\tilde{L}}$ and $\Gamma_{L}$ on $A\tilde{G}$ and $AG$ are $\Psi$-related.
\end{enumerate} 
In addition, we also prove
\begin{proposition}
\label{morphisms.Lagrangian}
Let  $\map{\psi }{\tilde{G}}{G}$ be a morphism of Lie groupoids and $\map{\Psi }{A\tilde{G}}{AG}$ the induced morphism of Lie algebroids. Assume that $\Psi $ is fiberwise bijective. If $L$ is a regular Lagrangian on $AG$ and $\tilde{L}=L\circ \Psi $, then $\tilde{L}$ also is regular and the corresponding exact discrete Lagrangians are related by $\mathbb{\tilde{L}}_h^{e}=\mathbb{L}_h^{e}\circ \psi $.
\end{proposition}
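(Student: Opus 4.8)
The plan is to deduce the regularity of $\tilde{L}$ from the discussion immediately preceding the statement, and then to establish the functional identity $\widetilde{\mathbb{L}}_h^{e}=\mathbb{L}_h^{e}\circ\psi$ by a direct substitution in the defining integral (\ref{exact-lagrangian}). Indeed, the regularity of $\tilde{L}$ is exactly item (i) of the preceding discussion, obtained from (\ref{Legendre-transformations-relation}), so only the relation between the two exact discrete Lagrangians requires a genuine argument.

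First I would collect the two intertwining properties that $\Psi$ provides. On the one hand, Proposition~\ref{discrete-continuous-retraction} gives
\[
\Psi\circ\tilde{R}^{e-}_h = R^{e-}_h\circ\psi .
\]
On the other hand, item (ii) of the preceding discussion asserts that $\Gamma_{\tilde{L}}$ and $\Gamma_L$ are $\Psi$-related, that is, $T\Psi\circ\Gamma_{\tilde{L}}=\Gamma_L\circ\Psi$. Since the flows of related vector fields intertwine, this yields, on the common domain of definition,
\[
\Psi\circ\Phi^{\Gamma_{\tilde{L}}}_t = \Phi^{\Gamma_L}_t\circ\Psi .
\]

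With these two identities in hand, together with the defining relation $\tilde{L}=L\circ\Psi$, I would compute, for $\tilde{g}$ in the relevant open subset of $\tilde{G}$,
\[
\begin{aligned}
\widetilde{\mathbb{L}}_h^{e}(\tilde{g})
&=\int_0^h \tilde{L}\bigl(\Phi^{\Gamma_{\tilde{L}}}_t(\tilde{R}^{e-}_h(\tilde{g}))\bigr)\,dt
=\int_0^h L\bigl(\Phi^{\Gamma_L}_t(\Psi(\tilde{R}^{e-}_h(\tilde{g})))\bigr)\,dt\\
&=\int_0^h L\bigl(\Phi^{\Gamma_L}_t(R^{e-}_h(\psi(\tilde{g})))\bigr)\,dt
=\mathbb{L}_h^{e}(\psi(\tilde{g})),
\end{aligned}
\]
where the second equality uses $\tilde{L}=L\circ\Psi$ and the flow-intertwining relation, and the third uses Proposition~\ref{discrete-continuous-retraction}. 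This is precisely the asserted identity $\widetilde{\mathbb{L}}_h^{e}=\mathbb{L}_h^{e}\circ\psi$.

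The chain of equalities is essentially forced once the two intertwining relations are available, so I do not expect a serious obstacle. The only care needed is the routine bookkeeping of domains, namely checking that $\psi(\tilde{g})$ lies in the open set $U\subseteq G$ on which $\mathbb{L}_h^{e}$ is defined whenever $\tilde{g}$ lies in the corresponding open set of $\tilde{G}$; this compatibility of the two exponential maps under $\psi$ is exactly what is encoded in Proposition~\ref{discrete-continuous-retraction}, so it comes for free.
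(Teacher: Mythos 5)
Your proposal is correct and follows essentially the same route as the paper's own proof: regularity of $\tilde{L}$ is delegated to the discussion preceding the statement, the flows are intertwined via the $\Psi$-relatedness of $\Gamma_{\tilde{L}}$ and $\Gamma_L$, and the identity of the exact discrete Lagrangians is obtained by the same chain of substitutions in the defining integral, using Proposition~\ref{discrete-continuous-retraction} and $\tilde{L}=L\circ\Psi$ (the paper merely writes the chain starting from $\mathbb{L}^{e}_h(\psi(\tilde{g}))$ rather than from $\widetilde{\mathbb{L}}_h^{e}(\tilde{g})$).
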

\begin{proof}
Since the \sode\  vector fields $\Gamma_{\tilde{L}}$ and $\Gamma_{L}$ are $\Psi$-related, it 
follows that the local flow $\Phi^{{\Gamma}_{\tilde{L}}}_t$ of the vector field $\Gamma_{\tilde{L}}$ and the local flow $\Phi^{\Gamma_L}_t$ of the vector field $\Gamma_{L}$ satisfy $\Psi \circ\Phi^{\Gamma_{\tilde{L}}}_t=\Phi^{\Gamma_L}_t \circ \Psi $. Thus   
\begin{align*}
\mathbb{L}^{e}_h(\psi (\tilde{g}))
&=\int_0^hL(\Phi_t^{\Gamma_L}(R^{e-}_h(\psi (\tilde{g}))))\,dt\\
&=\int_0^hL(\Phi_t^{\Gamma_L}(\Psi (\tilde{R}^{e-}_h(\tilde{g}))))\,dt\\
&=\int_0^hL(\Psi (\Phi_t^{\Gamma_{\tilde{L}}}(\tilde{R}^{e-}_h(\tilde{g}))))dt\\
&=\int_0^h\tilde{L}(\Phi_t^{\Gamma_{\tilde{L}}}(\tilde{R}^{e-}_h(\tilde{g})))\,dt\\
&=\tilde{\mathbb{L}}^e_h(\tilde{g}),
\end{align*}
where we have used that $R^{e-}_h\circ\psi =\Psi \circ \tilde{R}^{e-}_h$ (see Proposition \ref{discrete-continuous-retraction}).
\end{proof}
Next, we will obtain the relation between the discrete Legendre transformations associated with $\tilde{\mathbb{L}}_h^{e}$ and $\mathbb{L}^{e}_h$.
\begin{proposition}
\label{discrete-morphisms-Legendre.and.Cartan}
Let $\map{\psi }{\tilde{G}}{G}$ be a morphism of Lie groupoids and $\map{\Psi }{A\tilde{G}}{AG}$ the induced morphism of Lie algebroids. Let $L$ be a Lagrangian function on $AG$ and $\tilde{L}=L\circ\Psi $. Assume that $\Psi $ is fiberwise bijective\footnote{Alternatively, the statement of the theorem holds true if we consider any morphism such that $\mathbb{\tilde{L}}_h^{e}=\mathbb{L}_h^{e}\circ \psi$, in addition to $\tilde{L}=L\circ \Psi $}. Then 
\begin{itemize}
\item $\F^+\mathbb{L}_h^{e} \circ\psi =(\Psi^{-1})^* \circ \F^+\mathbb{\tilde{L}}^e_h$,
\item $\F^-\mathbb{L}_h^{e} \circ\psi =(\Psi^{-1})^* \circ \F^-\mathbb{\tilde{L}}^e_h$.
\end{itemize}
\end{proposition}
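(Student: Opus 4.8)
The plan is to verify both identities pointwise: I fix $\tilde{g}\in\tilde{G}$, set $g=\psi(\tilde{g})$, evaluate each side on an arbitrary vector of the appropriate fibre of $AG$, and reduce everything, via the chain rule, to three structural facts. These are: (a) the relation $\mathbb{\tilde{L}}_h^{e}=\mathbb{L}_h^{e}\circ\psi$ established in Proposition \ref{morphisms.Lagrangian}; (b) the fact that a groupoid morphism $\psi$ over $\varphi$ intertwines identities, translations and inversion, so that $\psi\circ\tilde{\varepsilon}=\varepsilon\circ\varphi$, $\alpha\circ\psi=\varphi\circ\tilde{\alpha}$, $\beta\circ\psi=\varphi\circ\tilde{\beta}$, $\psi\circ l_{\tilde{g}}=l_{g}\circ\psi$, $\psi\circ r_{\tilde{g}}=r_{g}\circ\psi$ and $\psi\circ\tilde{\imath}=i\circ\psi$; and (c) the identification of the induced Lie algebroid morphism $\Psi=A\psi$ with the restriction of $T\psi$ to $A\tilde{G}=\ker T\tilde{\alpha}\at{\tilde{\varepsilon}(\tilde{M})}$ (see Appendix \ref{algebroide-grupoide}), so that $T\psi\circ\Psi^{-1}=\id$ on each fibre of $AG$ lying over an identity.

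For the first identity I take $v\in A_{\beta(g)}G$. Since $\beta\circ\psi=\varphi\circ\tilde{\beta}$, the fibres match: $\Psi^{-1}(v)\in A_{\tilde{\beta}(\tilde{g})}\tilde{G}$, and both $\F^{+}\mathbb{L}_h^{e}(g)$ and $(\Psi^{-1})^{*}(\F^{+}\mathbb{\tilde{L}}^e_h(\tilde{g}))$ lie in $A^{*}_{\beta(g)}G$. Unwinding the definition (\ref{DLt+}) and then using (a),
\[
\bigl((\Psi^{-1})^{*}(\F^{+}\mathbb{\tilde{L}}^{e}_{h}(\tilde{g}))\bigr)(v)
=\bigl(\Psi^{-1}(v)\bigr)\bigl(\mathbb{\tilde{L}}_{h}^{e}\circ l_{\tilde{g}}\bigr)
=\bigl(\Psi^{-1}(v)\bigr)\bigl(\mathbb{L}_{h}^{e}\circ\psi\circ l_{\tilde{g}}\bigr).
\]
By (b) we have $\psi\circ l_{\tilde{g}}=l_{g}\circ\psi$, so the right-hand side equals $\bigl(\Psi^{-1}(v)\bigr)\bigl(\mathbb{L}_{h}^{e}\circ l_{g}\circ\psi\bigr)$. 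Applying the chain rule at the identity $\tilde{\varepsilon}(\tilde{\beta}(\tilde{g}))$ (which $\psi$ sends to $\varepsilon(\beta(g))$, where $v$ is based) and then (c), this becomes $\bigl(T\psi(\Psi^{-1}(v))\bigr)(\mathbb{L}_{h}^{e}\circ l_{g})=v(\mathbb{L}_{h}^{e}\circ l_{g})=(\F^{+}\mathbb{L}_h^{e}(g))(v)$, which is exactly the left-hand side evaluated on $v$.

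The second identity follows by the same argument from the definition (\ref{DLt-}): here $v\in A_{\alpha(g)}G$ (the fibres match through $\alpha\circ\psi=\varphi\circ\tilde{\alpha}$), and $l_{\tilde{g}}$ is replaced by $r_{\tilde{g}}\circ\tilde{\imath}$. Using $\psi\circ r_{\tilde{g}}\circ\tilde{\imath}=r_{g}\circ i\circ\psi$ from (b), the identical chain-rule step gives $-\bigl(\Psi^{-1}(v)\bigr)(\mathbb{L}_{h}^{e}\circ r_{g}\circ i\circ\psi)=-v(\mathbb{L}_{h}^{e}\circ r_{g}\circ i)=(\F^{-}\mathbb{L}_h^{e}(g))(v)$. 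I do not expect a genuine mathematical obstacle: once (a)–(c) are available the computation is essentially forced. The real care is in the bookkeeping — tracking the base points of the various fibres of $AG$, $A\tilde{G}$ and their duals, and checking that each composition of $\psi$ with a translation is defined on the correct source/target fibre — and in noting that the fibrewise bijectivity of $\Psi$ enters only to form $\Psi^{-1}(v)$ (hence $(\Psi^{-1})^{*}$) and to invoke Proposition \ref{morphisms.Lagrangian} for fact (a).
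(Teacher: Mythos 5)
Your proof is correct and takes essentially the same route as the paper's: both arguments reduce the two identities to Proposition \ref{morphisms.Lagrangian} (i.e.\ $\mathbb{\tilde{L}}_h^{e}=\mathbb{L}_h^{e}\circ\psi$) combined with the facts that the groupoid morphism $\psi$ intertwines left translation, right translation and inversion, and that $\Psi$ is the restriction of $T\psi$ to $A\tilde{G}$. The only cosmetic difference is direction and language: the paper starts from the left-hand side, evaluates on $\Psi(\tilde{a})$ and differentiates along curves $\tilde{\sigma}(s)$ representing $\tilde{a}$, whereas you start from the right-hand side, evaluate on $v\in A_{\beta(g)}G$ and pull back through $\Psi^{-1}$ using the chain rule on tangent vectors.
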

\begin{proof}
Let $\tilde{g}$ be a point in $\tilde{G}$ and $\tilde{a} \in A\tilde{G}$ such that $\tilde{\alpha}(\tilde{g})=\tilde{\tau}(\tilde{a})$. We take a curve $\tilde{\sigma}$ in the $\tilde{\alpha}$-fiber with derivative $\tilde{a}$ at $s =0$.
Then, 
\begin{align*}
\pai{(\F^+\mathbb{L}_h^{e} \circ\psi )(\tilde{g})}{\Psi(\tilde{a})}
&=\frac{d}{ds}\mathbb{L}_h^{e}(\psi (\tilde{\sigma}(s)))\at{s=0}\\
&=\frac{d}{ds}\mathbb{\tilde{L}}_h^{e}(\tilde{\sigma}(s))\at{s=0}\\
&=\pai{\F^+\mathbb{\tilde{L}}_h^{e}(\tilde{g})}{\tilde{a}}\\
& = \pai{((\Psi^{-1})^* \circ \F^+\mathbb{\tilde{L}}^e_h)(\tilde{g})}{\Psi(\tilde{a})}\\ 
\end{align*}
where we have used Proposition \ref{morphisms.Lagrangian} and the fact that $\psi (\tilde{\sigma} (s))$ is a curve in the $\alpha $-fiber with derivative $\Psi (\tilde{a})$ at $s=0$.

Similarly,
\begin{align*}
\pai{(\F^-\mathbb{L}^{e}_h\circ\psi )(\tilde{g})}{\Psi(\tilde{a})}
&=-\frac{d}{ds}\mathbb{L}_h^{e}(\psi (\tilde{\sigma} (s)^{-1}))\at{s=0}\\
&=-\frac{d}{ds}\mathbb{\tilde{L}}_h^{e}(\tilde{\sigma}(s)^{-1})\at{s=0}\\
&=\pai{\F^-\mathbb{\tilde{L}}_h^{e}(\tilde{g})}{\tilde{a}}\\
& = \pai{((\Psi^{-1})^* \circ \F^-\mathbb{\tilde{L}}^e_h)(\tilde{g})}{\Psi(\tilde{a})}\\ 
\end{align*}
which completes the proof.
\end{proof}

Now, we obtain the relation between the Legendre transformation of a regular continuous Lagrangian function and the Legendre transformations of the corresponding exact discrete Lagrangian function.

\begin{theorem}\label{Legendre-transformation-cont-discrete}
The Legendre transformation $\calf L$ of a regular continuous Lagrangian and the Legendre transformations $\F^\pm\Le^{e}$ 
of the corresponding exact discrete Lagrangian are related by
\[
\F^-\Le^{e}=\calf L\circ R^{e-}_h\qquad\text{and}\qquad\F^+\Le^{e}=\calf L\circ R^{e+}_h.
\] 
\end{theorem}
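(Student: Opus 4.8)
The plan is to prove both identities in two stages: first in the model situation of the vertical bundle $V\alpha$ of the source map---which fibrewise is nothing but the pair-groupoid case---and then to transfer the result to the general Lie algebroid $AG$ by means of the groupoid morphism $\psi:G\alpha\to G$, $\psi(g,g')=g^{-1}g'$, whose induced algebroid map $\Psi:V\alpha\to AG$ is fibrewise bijective.

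The transfer step is essentially formal once the model case is available. Writing $\tilde{L}=L\circ\Psi$ and assuming the identities $\F^\pm\tilde{\mathbb{L}}_h^{e}=\calf\tilde{L}\circ\tilde{R}^{e\pm}_h$ for $\tilde{L}$ on $V\alpha$, I would compose with $\psi$ and chain together Proposition~\ref{discrete-morphisms-Legendre.and.Cartan}, relation~(\ref{Legendre-transformations-relation}) and Proposition~\ref{discrete-continuous-retraction}:
\[
\F^-\mathbb{L}_h^{e}\circ\psi=(\Psi^{-1})^{*}\circ\F^-\tilde{\mathbb{L}}_h^{e}=(\Psi^{-1})^{*}\circ\calf\tilde{L}\circ\tilde{R}^{e-}_h=\calf L\circ\Psi\circ\tilde{R}^{e-}_h=\calf L\circ R^{e-}_h\circ\psi,
\]
and identically with $+$ in place of $-$. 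Since $\psi$ is locally surjective onto $U$ near the identities (for instance $\psi(\varepsilon(\alpha(k)),k)=k$), one cancels $\psi$ on the right to get $\F^\pm\mathbb{L}_h^{e}=\calf L\circ R^{e\pm}_h$. The only point needing care is to arrange, by shrinking the neighbourhoods, that all the maps in the chain are defined on a common neighbourhood of the identity section.

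It therefore remains to establish the model case on $V\alpha$. The restriction of the groupoid $G\alpha\rightrightarrows G$ to the units lying in a single $\alpha$-fibre $Q_m=\alpha^{-1}(m)$ is the pair groupoid $Q_m\times Q_m$; a \sode\ on $V\alpha$ restricts on each $TQ_m$ to an ordinary \sode, and $\tilde{L}|_{TQ_m}$ is regular; and the Legendre transformations on $V\alpha$, both discrete and continuous, only pair against $\alpha$-vertical vectors, while the translations $l_g$ and $r_g\circ i$ stay inside a single $\alpha$-fibre. Hence it suffices to prove the identities fibrewise, that is, for a regular $L$ on $TQ$ with exact discrete Lagrangian $\mathbb{L}_h^{e}(q_0,q_1)=\int_0^h L(\gamma(t),\dot\gamma(t))\,dt$, where $\gamma$ is the unique Euler-Lagrange solution with $\gamma(0)=q_0$ and $\gamma(h)=q_1$ furnished by Theorem~\ref{convexity2}. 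For this I would run the classical variational computation: varying the final endpoint through a family $\gamma_s$ with $\gamma_s(0)=q_0$ and $\gamma_s(h)=q_1(s)$---smooth in $s$ by the diffeomorphism property of $exp^{\Gamma}_h$---and putting $V(t)=\frac{d}{ds}\big|_{s=0}\gamma_s(t)$, so that $V(0)=0$ and $V(h)=\delta q_1$, integration by parts yields
\[
\frac{d}{ds}\Big|_{s=0}\mathbb{L}_h^{e}(q_0,q_1(s))=\int_0^h\Big(\pd{L}{q^i}-\frac{d}{dt}\pd{L}{\dot q^i}\Big)V^i\,dt+\Big[\pd{L}{\dot q^i}V^i\Big]_0^h.
\]
The integral vanishes because $\gamma$ solves the Euler-Lagrange equations, and the boundary term collapses to $\calf L(\dot\gamma(h))(\delta q_1)$; as $\F^+\mathbb{L}_h^{e}=D_2\mathbb{L}_h^{e}$ and $R^{e+}_h(q_0,q_1)=\dot\gamma(h)$, this is exactly $\F^+\mathbb{L}_h^{e}=\calf L\circ R^{e+}_h$. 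Varying instead the initial endpoint (so $V(0)=\delta q_0$ and $V(h)=0$) and using $\F^-\mathbb{L}_h^{e}=-D_1\mathbb{L}_h^{e}$, the boundary term surviving at $t=0$ gives $\F^-\mathbb{L}_h^{e}=\calf L\circ R^{e-}_h$.

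The step I expect to be the genuine obstacle is the bookkeeping in this model case: reconciling the intrinsic definitions~(\ref{DLt-})--(\ref{DLt+}) of $\F^\pm$, which are phrased through the left translation $l_g$ and the composite $r_h\circ i$ on the groupoid, with the plain partial derivatives $D_2$ and $-D_1$ used above, keeping careful track of the sign and of the inversion hidden in $\F^-$. Once these identifications are settled on the pair groupoid, the parametrized version over $m\in M$ on $V\alpha$ is immediate, and the passage to the general groupoid is the formal chain displayed above.
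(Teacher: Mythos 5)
Your proposal is correct, and it follows the paper's two-stage architecture: a model case for a Lagrangian on $V\alpha$, followed by transfer to $AG$ through the morphism $\psi(g_0,g_1)=g_0^{-1}g_1$. Indeed, your transfer chain (Proposition~\ref{discrete-morphisms-Legendre.and.Cartan}, then the model case, then $(\Psi^{-1})^*\circ\calf\tilde{L}=\calf L\circ\Psi$ from (\ref{Legendre-transformations-relation}), then Proposition~\ref{discrete-continuous-retraction}, then surjectivity of $\psi$) is word for word the one in the paper. Where you genuinely differ is in how the model case is established. The paper proves the identities directly on the Lie algebroid $V\alpha$ by invoking the variational calculus on Lie algebroids of \cite{Ma}: for admissible variations $a(s,t)$ with infinitesimal variation $b(s,t)$ vanishing at one endpoint, one has $\frac{d}{ds}S(a_s)\at{s=0}=\pai{\calf L(a(0,t))}{b(0,t)}\big|_{t_0}^{t_1}$, and this is applied to the variations $\gamma(s,t)=\tau_{V\alpha}(\Phi_t^{\Gamma_{\tilde{L}}}(\tilde{R}^{e-}_h(g(s),g_1)))$ and its analogue at the other endpoint. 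You instead push the reduction one step further: all the relevant structure on $V\alpha$ (the \sode, the Lagrangian and its fibre derivative, the discrete Legendre maps of $G\alpha$, and the exact discrete Lagrangian) restricts coherently to each $\alpha$-fibre $Q_m$, where $G\alpha$ becomes the pair groupoid $Q_m\times Q_m$ and $V\alpha$ becomes $TQ_m$, and there you run the classical Marsden--West endpoint computation with integration by parts. Both routes are variational at heart; yours is more elementary and self-contained, needing only the classical tangent-bundle argument rather than the algebroid variational formula of \cite{Ma}, at the price of the fibrewise bookkeeping you flag --- which does go through, since the discrete Legendre transformations of $G\alpha$ pair only against $\alpha$-vertical vectors, the translations $l_{(g_0,g_1)}$ and $r_{(g_0,g_1)}\circ i$ preserve $Q_m\times Q_m$, and a regular Lagrangian on $V\alpha$ restricts to a regular Lagrangian on each $TQ_m$ whose Euler--Lagrange \sode\ is the restriction of $\Gamma_{\tilde{L}}$. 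What the paper's route buys is uniformity: the formula from \cite{Ma} applies verbatim on the algebroid $V\alpha$ with no fibrewise reduction, and the same pattern would work on Lie algebroids that are not of the form $V\pi$.
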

\begin{proof}
We will prove that the result holds true for a Lagrangian system in the Lie groupoid $G\alpha $, and later we will extend it to an arbitrary Lie groupoid by using reduction. 

Let $\tilde{L}$ be a continuous Lagrangian on $V\alpha $ and $\mathbb{\tilde{L}}^{e}_h$ the corresponding discrete exact Lagrangian. Let $\Gamma_{\tilde{L}}$ be the \sode\ solution of the continuous dynamics and $\phi_t^{\Gamma_{\tilde{L}}}$ its local flow. 

From the results in~\cite{Ma} we have that if $a(s,t)$ is a variation of a curve $a(0,t)$ which is a solution of the Euler-Lagrange equations for a Lagrangian function $L:A \to \Real$ on a Lie algebroid $A$ over $M$, and $b(s,t)$ is the corresponding infinitesimal variation (i.e. $\Upsilon(s,t)=a(s,t)dt+b(s,t)ds$ is a morphism of Lie algebroids from $T\Real^2$ to $A$) with either $b(s,t_0)=0$ or $b(s,t_1)=0$, then   
\[
\frac{d}{ds}S(a_s)\at{s=0}
=\int_{t_0}^{t_1}\frac{d}{ds}L(a(s,t))\at{s=0}\ \,dt 
=\pai{\calf L(a(0,t))}{b(0,t)}\Big|_{t_0}^{t_1}
\]
In the case of the Lie groupoid $G\alpha $ and its Lie algebroid $V\alpha $, we have that the infinitesimal variations are of the form 
\[
\Upsilon(s,t)=T_{(s,t)}\gamma =\pd{\gamma }{t}(s,t)dt+\pd{\gamma }{s}(s,t)ds,
\]
where $\gamma (s,t)$ takes values on a fiber of $\alpha:G \to M$. Thus, $a(s,t)=\pd{\gamma }{t}(s,t)$ and $b(s,t)=\pd{\gamma }{s}(s,t)$, both taking values in $V\alpha $.

We will apply the above result for $t_0=0$ and $t_1=h$. Given a vector $v_0\in V_{g_0}\alpha $, we consider a curve $s \to g(s)$ such that, $g(0) = g_0$, $\alpha (g(s))=\alpha (g_0)$ and $\dot{g}(0)=v_0$. Define the variation  $\gamma (s,t)=\tau_{V\alpha}(\Phi _t^{\Gamma_{\tilde{L}}}(\tilde{R}_h^{e-}(g(s),g_1)))$, so that 
\[
a(s,t)=\pd{\gamma }{t}(s,t)=\phi _t^{\Gamma_{\tilde{L}}}(\tilde{R}_h^{e-}(g(s),g_1))
\qquand
b(s,t)=\pd{\gamma }{s}(s,t).
\]
Then we have
\begin{align*}
&\text{at $t=0$,}&&\gamma (s,0)=\tau_{V\alpha}(\Phi _0^{\Gamma_{\tilde{L}}}(\tilde{R}_h^{e-}(g(s),g_1)))=\tau_{V\alpha}(\tilde{R}_h^{e-}(g(s),g_1))=g(s)\\
&\text{at $t=h$,}&&\gamma (s,h)=\tau_{V\alpha}(\Phi _h^{\Gamma_{\tilde{L}}}(\tilde{R}_h^{e-}(g(s),g_1)))= \tau_{V\alpha}(\tilde{R}_h^{e+}(g(s),g_1))=g_1,
\end{align*}
so that 
\[
b(s,0)=\dot{g}(s)
\qquad\text{and}\qquad
b(s,h)=0.
\]
Therefore, for $(g_0,g_1)\in G\alpha $, 
\begin{align*}
\pai{\F^-\mathbb{\tilde{L}}^{e}_h(g_0,g_1)}{v_0}
&=-\frac{d}{ds}\mathbb{\tilde{L}}_h^{e}\bigl((g_0,g(s))^{-1}(g_0,g_1)\bigr)\at{s=0}\\
&=-\frac{d}{ds}\mathbb{\tilde{L}}_h^{e}(g(s),g_1)\at{s=0}\\
&=-\frac{d}{ds}\int_0^h \tilde{L}(\Phi_t^{\Gamma_{\tilde{L}}}(\tilde{R}^{e-}_h(g(s),g_1)))dt\at{s=0}\ \\
&=\pai{\calf {\tilde{L}}(a(0, 0))}{b(0,0)}-\pai{\calf {\tilde{L}}(a(0, h))}{b(0,h)}\\
&=\pai{\calf {\tilde{L}}(\tilde{R}^{e-}_h(g_0,g_1))}{v_0}.
\end{align*}
This proves the first relation for the groupoid $G\alpha $.

For the second relation, we proceed as follows. If $v_1 \in V_{g_1}\alpha$, we take a curve $s \to g(s)$
on $G$ such that $g(0) = g_1$, $\alpha(g(s)) = \alpha(g_1)$ and $\dot{g}(0)=v_1$. Then,
 we define the variation  $\gamma (s, t)=\tau_{V\alpha}(\Phi_t^{\Gamma_{\tilde{L}}}(\tilde{R}^{e-}_h(g_1,g(s))))$, so that 
\[
a(s,t)=\pd{\gamma }{t}(s,t)=\Phi _t^{\Gamma_{\tilde{L}}}(\tilde{R}^{e-}_h(g_1,g(s)))
\qquand
b(s,t)=\pd{\gamma }{s}(s,t).
\]
Thus, we have
\begin{align*}
&\text{at $t=0$,}&&\gamma (s,0)=\tau_{V\alpha}(\Phi _0^{\Gamma_{\tilde{L}}}(\tilde{R}_h^{e-}(g_1, g(s))))=\tau_{V\alpha}(\tilde{R}_h^{e-}(g_1,g(s)))=g_1\\
&\text{at $t=h$,}&&\gamma (s,h)=\tau_{V\alpha}(\Phi _h^{\Gamma_{\tilde{L}}}(\tilde{R}_h^{e-}(g_1, g(s))))=\tau_{V\alpha}(\tilde{R}_h^{e+}(g_1,g(s)))=g(s)
\end{align*}
and therefore
\[
b(s, 0)=0
\qquad\text{and}\qquad
b(0, h)=\dot{g}(0)=v_1.
\]
Consequently, for $(g_0,g_1)\in G\alpha $,
\begin{align*}
\pai{\F^+\mathbb{\tilde{L}}^{e}_h(g_0,g_1)}{v_1}
&=\frac{d}{ds}\mathbb{\tilde{L}}^{e}_h\bigl((g_0, g(s))\bigr)\at{s=0}\\
&=\frac{d}{ds}\int_0^h \tilde{L}(\Phi _t^{\Gamma_{\tilde{L}}}(\tilde{R}^{e-}_h(g_0,g(s)))) dt \at{s=0}\ \\
&=\pai{\calf {\tilde{L}}(\tilde{R}^{e+}_h(g_0,g_1))}{v_1}-\pai{\calf {\tilde{L}}(\tilde{R}^{e-}_h(g_0,g_1))}{b(0,0)}\\
&=\pai{\calf {\tilde{L}}(\tilde{R}^{e+}_h(g_0,g_1))}{v_1}.
\end{align*}
This proves the statement for the case of the Lie groupoid $G\alpha $.

For the general case, consider the Lie groupoid morphism $\map{\psi }{G\alpha }{G}$ over $\map{\varphi =\beta }{G}{M}$ given by
\[
\psi (g_0,g_1)=g_0^{-1}g_1.
\]
The induced Lie algebroid morphism $\map{\Psi }{V\alpha }{AG}$ is
\[
\Psi(v)=T_gl_{g^{-1}}(v), \; \; \mbox{ for } v \in V_g\alpha \mbox{ and } g \in G.
\]
If $L \in C^{\infty}(AG)$ is a regular Lagrangian, we define the Lagrangian $\tilde{L}\in C^\infty (V\alpha )$ given by $\tilde{L}(v)=L(T_gl_{g^{-1}}(v))$, i.e. $\tilde{L}=L\circ\Psi $. It follows, from Proposition~\ref{morphisms.Lagrangian}, that the corresponding exact Lagrangians are related by $\mathbb{\tilde{L}}^{e}_h=\mathbb{L}^{e}_h\circ\psi $ and, from Proposition~\ref{discrete-morphisms-Legendre.and.Cartan}, we have 
\[
\F^\pm \mathbb{L}^{e}_h \circ \psi=(\Psi^{-1})^* \circ \F^\pm \mathbb{\tilde{L}}^{e}_h.
\]
From $\F^\pm\mathbb{\tilde{L}}^{e}_h  =\calf {\tilde{L}}\circ\tilde{R}^{e\pm} _h$ we get 
\[
\F^\pm\mathbb{{L}}^{e}_h \circ\psi = (\Psi^{-1})^* \circ \calf \tilde{L} \circ \tilde{R}^{e\pm}_h.
\]
On the other hand, using that $L$ and $\tilde{L}$ are $\Psi$-related, we have that
 $(\Psi^{-1})^* \circ \calf {\tilde{L}}= \calf {L}\circ\Psi $, so that we finally get
\[
\F^\pm \mathbb{L}^{e}_h \circ \psi =  \calf L \circ \Psi \circ \tilde{R}^{e\pm}_{h} = \calf L \circ R^{e\pm}_h \circ \psi,
\]
where we have used that $\Psi \circ \tilde{R}^{e\pm}_h = R^{e\pm}_h \circ \psi$ (see Proposition \ref{discrete-continuous-retraction}).
The result follows by noticing that $\psi $ is surjective.
\end{proof}
Next, we will prove Theorem \ref{exact-discrete-hamiltonian-flow}.
\begin{proof}
(i) Using Theorem \ref{Legendre-transformation-cont-discrete}, it follows that $\F^\pm \mathbb{{L}}^{e}_h$ are local diffeomorphisms. This proves the result. 

(ii) From (\ref{retraction-flow2}) and Theorem \ref{Legendre-transformation-cont-discrete}, we deduce that
\[
\F^+\mathbb{L}^{e}_h =\calf L\circ R^{e+}_h=\calf L\circ\Phi^{\Gamma_L}_h\circ R^{e-}_h=\Phi^{X_H}_h\circ\calf L\circ\ R^{e-}_h=\phi^{X_H}_h\circ\F^-\mathbb{L}^{e}_h,
\]
where we have used that $\calf L\circ\Phi^{\Gamma_L}_h=\Phi^{X_H}_h\circ\calf L$.
\end{proof}

\subsection{Variational error analysis}
\label{var-error-anal}
In this section, we will extend the results of Patrick and Cuell \cite{PaCu} (see also \cite{MaWe}) on the variational error analysis for the more general case when the continuous regular Lagrangian function is defined on the Lie algebroid of a Lie groupoid.

For this purpose, we will use the notion of an approximation order in the setting of smooth manifolds which was introduced in \cite{PaCu}.

\medskip

\noindent {\bf Approximation order}

Let $F_i: I \times Q \to Q'$, $i = 1,2$, with $I$ an open interval in $\mathbb{R}$ ($0 \in I$) and $Q$, $Q'$ smooth manifolds, such that $F_1(0, q) = F_2(0, q)$, for all $q \in Q$, then
\[
F_1 = F_2 + O(h^r), \; \; \mbox{ with } r \geq 1,
\] 
if for every $q \in Q$ there exists a local chart $\varphi$ at $q' = F_i(0, q)$ in $Q'$ such that
\begin{equation}\label{order-approx}
\varphi(F_2(h, \bar{q})) - \varphi(F_1(h, \bar{q})) = h^r ((\delta F)_{\varphi}(h, \bar{q}))
\end{equation}  
for $(h, \bar{q})$ in some neighborhood of $(0, q)$, where $(\delta F)_{\varphi}$ is a smooth function in such a neighborhood. This definition does not depend on the coordinate chart (see \cite{PaCu}).

Note that (\ref{order-approx}) implies the usual condition
\[
\| \varphi(F_2(h, \bar{q})) - \varphi(F_1(h, \bar{q})) \| \leq C |h|^r
\]
in some neighborhood of $(0, q)$, with $C$ a positive constant.

\medskip

\noindent{\bf The particular case when $AG = V\pi$}

Next, we will discuss the particular case when the regular continuous Lagrangian function is defined in the vertical bundle $V\pi$ of  a fibration $\pi: P \to M$.

Let $\tilde{L}: V\pi \to \mathbb{R}$ be a regular Lagrangian function.

Denote by $\mathbb{\tilde{L}}^{e}_h $ the exact discrete Lagrangian function on the Lie groupoid $G\pi$ associated with $\tilde{L}$.

As in Section \ref{exact-discrete-Lie-groupoid} and in order to simplify the reading, we will omit any reference to the domain of the maps that we are going to consider, and we will just indicate the spaces where they are defined.

Now, let $\tilde{L}_h^d$ be a regular discrete Lagrangian function on $G\pi$ such that
\[
\tilde{L}_h^d = \mathbb{\tilde{L}}^{e}_h + O(h^{r+1}).
\]
In this case, we say that the given discrete Lagrangian  $\tilde{L}_h^d $ is of order $r$ \cite{MaWe}. 
Then, since our discussion is local, we may consider a regular Lagrangian function $\bar{L}$ on $TP$ such that:
\begin{enumerate}
\item
It is a (local) extension of $\tilde{L}$ and

\item
The (local) restriction to $V\pi$ of the \sode\ $\Gamma_{\bar{L}}$ on $TP$ associated with $\bar{L}$ is just the \sode\ $\Gamma_{\tilde{L}}$ on $V\pi$ associated with $\tilde{L}$.
\end{enumerate}
Thus, if $i_{G\pi}: G\pi \to P \times P$ is the canonical inclusion and $F_{{\mathbb{\bar{L}}}^{e}_h}$ (resp., $F_{{\mathbb{\tilde{L}}}^{e}_h}$) is the discrete Lagrangian evolution operator for ${\mathbb{\bar{L}}}^{e}_h$ (resp.,
${\mathbb{\tilde{L}}}^{e}_h$), we have that
\begin{equation}\label{comp-flujos-exact1} 
F_{{\mathbb{\tilde{L}}}^{e}_h} = F_{{\mathbb{\bar{L}}}^{e}_h} \circ i_{G\pi}.
\end{equation}
In a similar way, we may consider a regular discrete Lagrangian function $\bar{L}^d_h$ on the Lie groupoid $P \times P$ such that:
\begin{enumerate}
\item
It is a (local) extension of $\tilde{L}^d_h$;

\item
We have that
\[
\bar{L}^d_h = {\mathbb{\bar{L}}}^{e}_h + O(h^{r+1})
\]
and
\item
If $F_{\bar{L}^d_h}$ and $F_{\tilde{L}_h^d}$ are the discrete Lagrangian evolution operators associated with $\bar{L}_h^d$ and $\tilde{L}_h^d$, respectively, then
\begin{equation}\label{comp-flujos-exact2}
F_{\tilde{L}^{d}_h} = F_{\bar{L}^{d}_h} \circ i_{G\pi}.
\end{equation}
\end{enumerate}
Now, denote by $\bar{R}^{e-}_{h}$ the exact retraction from $P \times P$ on $TP$ associated with $\bar{L}$. Then, one may consider the evolution operators on $TP$ given by
\begin{equation}\label{Evolution-tangent}
\bar{F}_{\bar{L}^d_h} = \bar{R}_{h}^{e-} \circ F_{\bar{L}_h^d} \circ (\bar{R}_{h}^{e-})^{-1}, \; \; \bar{F}_{{\mathbb{\bar{L}}}^{e}_h}= \bar{R}_{h}^{e-}\circ F_{{\mathbb{\bar{L}}}^{e}_h} \circ (\bar{R}_{h}^{e-})^{-1}.
\end{equation}

In addition, using Theorem 4.8 in \cite{PaCu}, we deduce that
\[
\bar{F}_{\bar{L}^d_h}  = \bar{F}_{{\mathbb{\bar{L}}}^{e}_h} + O(h^{r+1}).
\]
Thus, from (\ref{Evolution-tangent}) and Proposition 4.4 in \cite{PaCu}, we obtain that
\[
F_{\bar{L}^d_h}  = F_{{\mathbb{\bar{L}}}^{e}_h} + O(h^{r+1}).
\]
Therefore, using (\ref{comp-flujos-exact1}) and (\ref{comp-flujos-exact2}), it follows that
\[
F_{\tilde{L}^d_h}  = F_{{\mathbb{\tilde{L}}}^{e}_h} + O(h^{r+1}).
\]
The following diagram illustrates the previous situation:
\[
\xymatrix{ G\pi\ar[rr]^{i_{G\pi}}\ar[dd]^{F_{\tilde{L}_h^d}}&   &P\times P\ar[dd]^{F_{\bar{L}_h^d}}\ar[rr]^{\bar{R}_{h}^{e-}}&&TP\ar[dd]^{{\bar{F}_{\bar{L}_h^d}}}\\
  &  & & &\\
  G\pi\ar[rr]^{i_{G\pi}}&   &P\times P\ar[rr]^{\bar{R}_{h}^{e-}}&&TP}
\]
In conclusion, we have proved the following result

\begin{proposition}\label{order-approx-particular}
Let $\pi: P \to M$ be a fibration and $\tilde{L}: V\pi \to \mathbb{R}$ a regular Lagrangian function. If $\tilde{L}^d_h$ is an order $r$ discretization on the Lie groupoid $G\pi$ then,
\[
F_{\tilde{L}^d_h}  = F_{{\mathbb{\tilde{L}}}^{e}_h} + O(h^{r+1}),
\]
where $F_{\tilde{L}^d_h}$ is the discrete Lagrangian evolution operator for $\tilde{L}^d_h$ and $F_{{\mathbb{\tilde{L}}}^{e}_h}$ is the exact discrete Lagrangian evolution operator.
\end{proposition}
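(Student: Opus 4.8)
The plan is to reduce the statement to the already-known tangent bundle case treated by Patrick and Cuell \cite{PaCu}, exploiting that $V\pi$ is a regular submanifold of $TP$ and that $G\pi$ is a Lie subgroupoid of the pair groupoid $P\times P$. Since the assertion is local near the identities, I would first invoke the construction of Section \ref{fibration}: one chooses a regular Lagrangian $\bar{L}$ on $TP$ that locally extends $\tilde{L}$ and whose associated \sode\ $\Gamma_{\bar{L}}$ restricts on $V\pi$ to $\Gamma_{\tilde{L}}$. Such an extension exists because $V\pi\subset TP$ is cut out by the equations $y^a=0$, and the explicit standard \sode\ written there is tangent to $V\pi$, so that trajectories issuing from $V\pi$ remain within a single fibre of $\pi$.

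The second step is to record the two compatibility relations with the canonical inclusion $i_{G\pi}:G\pi\to P\times P$. Because the dynamics of $\Gamma_{\bar{L}}$ preserves the fibres of $\pi$, the exact discrete evolution operators satisfy $F_{{\mathbb{\tilde{L}}}^{e}_h}=F_{{\mathbb{\bar{L}}}^{e}_h}\circ i_{G\pi}$. On the discrete side, I would extend the given order $r$ discretization $\tilde{L}^d_h$ to a regular discrete Lagrangian $\bar{L}^d_h$ on $P\times P$ which is again an order $r$ approximation of $\mathbb{\bar{L}}^{e}_h$ and whose evolution operator restricts correctly, i.e. $F_{\tilde{L}^d_h}=F_{\bar{L}^d_h}\circ i_{G\pi}$.

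With the problem lifted to $TP$ and the pair groupoid, I would transport the two evolution operators to the tangent bundle through the exact retraction $\bar{R}^{e-}_h$, putting $\bar{F}_{\bar{L}^d_h}=\bar{R}^{e-}_h\circ F_{\bar{L}^d_h}\circ(\bar{R}^{e-}_h)^{-1}$ and likewise for the exact operator. At this stage Theorem 4.8 of \cite{PaCu} applies directly and yields $\bar{F}_{\bar{L}^d_h}=\bar{F}_{{\mathbb{\bar{L}}}^{e}_h}+O(h^{r+1})$; then Proposition 4.4 of \cite{PaCu}, which guarantees invariance of the approximation order under the smooth conjugation by $\bar{R}^{e-}_h$, returns $F_{\bar{L}^d_h}=F_{{\mathbb{\bar{L}}}^{e}_h}+O(h^{r+1})$ on $P\times P$. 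Composing with $i_{G\pi}$ and using the two compatibility relations of the previous paragraph descends this estimate to $G\pi$, which is exactly the assertion.

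The step I expect to be delicate is not the final appeal to the Patrick--Cuell estimates but the bookkeeping in the reduction: one must check that $\bar{L}$ and $\bar{L}^d_h$ can be chosen so that all the required properties hold at once, namely that $\Gamma_{\bar{L}}$ restricts to $\Gamma_{\tilde{L}}$, that both the exact and the approximate evolution operators genuinely restrict along $i_{G\pi}$, and that the order $r$ condition survives the extension. The key structural fact that makes everything go through is the fibre-preservation of the dynamics: since $V\pi$ is an invariant submanifold of the flow, the discrete flow on $G\pi$ is literally the restriction of the discrete flow on $P\times P$, and once this is in place the order estimate passes through formally.
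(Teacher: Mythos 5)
Your proposal follows essentially the same route as the paper's own proof: extend $\tilde{L}$ and $\tilde{L}^d_h$ to $\bar{L}$ on $TP$ and $\bar{L}^d_h$ on $P\times P$ using the fibre-preservation of the dynamics, establish the restriction identities $F_{{\mathbb{\tilde{L}}}^{e}_h}=F_{{\mathbb{\bar{L}}}^{e}_h}\circ i_{G\pi}$ and $F_{\tilde{L}^d_h}=F_{\bar{L}^d_h}\circ i_{G\pi}$, conjugate by the exact retraction $\bar{R}^{e-}_h$, and invoke Theorem 4.8 and Proposition 4.4 of Patrick--Cuell before descending to $G\pi$. This matches the paper's argument step for step, including the citations, so the proposal is correct as written.
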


\medskip

\noindent {\bf The general case}

Now, we will extend Proposition \ref{order-approx-particular} for the general case when the continuous Lagrangian function is defined on the Lie algebroid of an arbitrary Lie groupoid.

In fact, we will prove the following result

\begin{theorem}\label{theorem-error}
Let $L: AG \to \mathbb{R}$ be a regular Lagrangian function on the Lie algebroid of a Lie groupoid $G$ over $M$. Suppose that $L^d_h: G \to  \mathbb{R}$ is a regular discrete Lagrangian function on $G$ and that  ${\mathbb{L}}^{e}_h$ is the exact discrete Lagrangian function on $G$ associated with $L$. If $L^d_h$ is an order $r$ discretization then
\[
F_{L^d_h}  = F_{{\mathbb{L}}^{e}_h} + O(h^{r+1}),
\]
where $F_{L^d_h}$ is the discrete Lagrangian evolution operator for $L^d_h$ and $F_{{\mathbb{L}}^{e}_h}$ is the exact discrete Lagrangian evolution operator.
\end{theorem}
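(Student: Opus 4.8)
The plan is to reduce the statement for an arbitrary Lie groupoid $G$ to the fibration case already settled in Proposition~\ref{order-approx-particular}, exactly as Theorem~\ref{Legendre-transformation-cont-discrete} was reduced, by means of the Lie groupoid morphism $\psi\colon G\alpha\to G$, $\psi(g_0,g_1)=g_0^{-1}g_1$, over $\varphi=\beta$, whose induced Lie algebroid morphism $\Psi\colon V\alpha\to AG$ is fiberwise bijective. First I would pull everything back to the fibration groupoid $G\alpha$ (that is, $G\pi$ for $\pi=\alpha$): set $\tilde{L}=L\circ\Psi$ on $V\alpha$ and $\tilde{L}^d_h=L^d_h\circ\psi$ on $G\alpha$. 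By Proposition~\ref{morphisms.Lagrangian}, $\tilde{L}$ is regular and $\tilde{\mathbb{L}}_h^{e}=\mathbb{L}_h^{e}\circ\psi$. Since $\tilde{L}^d_h-\tilde{\mathbb{L}}_h^{e}=(L^d_h-\mathbb{L}_h^{e})\circ\psi$ and $\psi$ does not depend on $h$, the order-$r$ hypothesis $L^d_h=\mathbb{L}_h^{e}+O(h^{r+1})$ is preserved under composition with $\psi$, so $\tilde{L}^d_h=\tilde{\mathbb{L}}_h^{e}+O(h^{r+1})$; moreover, because $\Psi$ is fiberwise bijective, regularity of $L^d_h$ transfers to $\tilde{L}^d_h$ (the argument is the one used for the continuous Lagrangians just before Proposition~\ref{morphisms.Lagrangian}, applied to the discrete Legendre transformations). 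Thus $\tilde{L}^d_h$ is a regular order-$r$ discretization on $G\alpha$ and Proposition~\ref{order-approx-particular} yields
\[
F_{\tilde{L}^d_h}=F_{\tilde{\mathbb{L}}_h^{e}}+O(h^{r+1})
\]
on $G\alpha$.

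Next I would intertwine the discrete evolution operators through $\psi$. The computation in Proposition~\ref{discrete-morphisms-Legendre.and.Cartan} uses only the relation between the two discrete Lagrangians under $\psi$, so it applies verbatim to any discrete Lagrangian $L_d$ on $G$ with pullback $L_d\circ\psi$ on $G\alpha$, giving $\F^\pm L_d\circ\psi=(\Psi^{-1})^*\circ\F^\pm(L_d\circ\psi)$. Applying this with $L_d=L^d_h$ and with $L_d=\mathbb{L}_h^{e}$, I claim
\[
\psi\circ F_{\tilde{L}^d_h}=F_{L^d_h}\circ\psi
\qquad\text{and}\qquad
\psi\circ F_{\tilde{\mathbb{L}}_h^{e}}=F_{\mathbb{L}_h^{e}}\circ\psi .
\]
Indeed, if $\tilde{h}=F_{L_d\circ\psi}(\tilde{g})$ then $(\tilde{g},\tilde{h})$ solves the discrete Euler--Lagrange equations for $L_d\circ\psi$, i.e.\ $\F^-(L_d\circ\psi)(\tilde{h})=\F^+(L_d\circ\psi)(\tilde{g})$; applying $(\Psi^{-1})^*$ and the intertwining turns this into $\F^-L_d(\psi(\tilde{h}))=\F^+L_d(\psi(\tilde{g}))$, so $(\psi(\tilde{g}),\psi(\tilde{h}))$ solves the discrete Euler--Lagrange equations for $L_d$ and, by uniqueness of the discrete evolution operator for a regular Lagrangian, $\psi(\tilde{h})=F_{L_d}(\psi(\tilde{g}))$.

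Finally I would descend the estimate from $G\alpha$ to $G$. Post-composing the estimate on $G\alpha$ with the $h$-independent smooth map $\psi$ preserves the approximation order, and combining with the intertwining gives
\[
F_{L^d_h}\circ\psi=F_{\mathbb{L}_h^{e}}\circ\psi+O(h^{r+1})
\]
as maps into $G$. Since $\psi$ is a surjective submersion, around every point of $G$ there is a local smooth section $s$ of $\psi$; pre-composing with the $h$-independent map $s$ (again preserving the order) and using $\psi\circ s=\mathrm{id}$ yields $F_{L^d_h}=F_{\mathbb{L}_h^{e}}+O(h^{r+1})$ on the image of $s$, and hence, by locality of the estimate, on all of $G$, which is the assertion.

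The main obstacle is this last descent step. It requires the two auxiliary facts that the order-$r$ relation is stable under pre- and post-composition with $h$-independent smooth maps (of the type of Proposition~4.4 in \cite{PaCu}) and, more delicately, the passage from an estimate that holds only after composition with the non-injective $\psi$ to one that holds on $G$ itself, for which the existence of local sections of the submersion $\psi$ is essential. The intertwining of the discrete flows, which rests on the uniqueness of the evolution operator and on the fiberwise bijectivity of $\Psi$, together with the transfer of regularity of $\tilde{L}^d_h$, are the remaining points to be handled with care, although each parallels a reduction already carried out in the proof of Theorem~\ref{Legendre-transformation-cont-discrete}.
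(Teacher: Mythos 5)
Your proposal is correct and follows essentially the same route as the paper: pull everything back to the fibration groupoid $G\alpha$ via $\psi(g_0,g_1)=g_0^{-1}g_1$, transfer the order-$r$ hypothesis, apply Proposition \ref{order-approx-particular} there, intertwine the discrete evolution operators through $\psi$, and descend the estimate to $G$. The only differences are matters of presentation: where the paper cites Corollary 4.7 of \cite{MaMaMa} for the intertwining $F_{L^d_h}\circ\psi=\psi\circ F_{\tilde{L}^d_h}$ and Proposition 4.4 of \cite{PaCu} together with surjectivity of $\psi$ for the final descent, you prove these directly (via the footnoted extension of Proposition \ref{discrete-morphisms-Legendre.and.Cartan}, uniqueness of the discrete evolution operator, and local sections of the submersion $\psi$), and you also make explicit the transfer of regularity to $\tilde{L}^d_h$, which the paper needs but leaves implicit.
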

\begin{proof}
Denote by $\psi: G\alpha \to G$ the Lie groupoid morphism given by (\ref{morph-Lie-group}) and by $\Psi: V\alpha \to AG$ the corresponding Lie algebroid morphism.

Now, let $\tilde{L}: V\alpha \to \mathbb{R}$ be the Lagrangian function on $V\alpha$ defined by
\[
\tilde{L} = L \circ \Psi.
\]
Then, from Proposition \ref{morphisms.Lagrangian}, we deduce that $\tilde{L}$ is a regular Lagrangian function on $V\alpha$ and 
\begin{equation}\label{comparacion-exact}
{\mathbb{\tilde{L}}}^{e}_h = {\mathbb{L}}^{e}_h \circ \psi,
\end{equation}
${\mathbb{\tilde{L}}}^{e}_h$ being the exact discrete Lagrangian function on $G\alpha$ associated with $\tilde{L}$.

Next, we consider the discrete Lagrangian function $\tilde{L}^d_h$ on $G\alpha$ given by
\begin{equation}\label{comparacion-discretos}
\tilde{L}_h^d = L_h^d \circ \psi.
\end{equation}
Using (\ref{comparacion-exact}), (\ref{comparacion-discretos}) and the fact that $L^d_h$ is an order $r$ discretization of $L$, we obtain that $\tilde{L}^d_h$ also is an order $r$ discretization of $\tilde{L}$. Thus, from Proposition \ref{order-approx-particular}, it follows that
\begin{equation}\label{comparacion-flujos-tilde}
F_{\tilde{L}^d_h}  = F_{{\mathbb{\tilde{L}}}^{e}_h} + O(h^{r+1}).
\end{equation}
On the other hand, using (\ref{comparacion-exact}), (\ref{comparacion-discretos}), Corollary 4.7 in \cite{MaMaMa} and the fact that $\psi$ is an epimorphism of Lie groupoids, we deduce that
\[
F_{L^d_h} \circ \psi = \psi \circ F_{\tilde{L}^d_h}, \; \; \; F_{{\mathbb{L}}^{e}_h} \circ \psi = \psi \circ F_{{\mathbb{\tilde{L}}}^{e}_h}.
\]
Therefore, from (\ref{comparacion-flujos-tilde}) and Proposition 4.4 in \cite{PaCu}, we conclude that
\[
F_{L^d_h} = F_{{\mathbb{L}}^{e}_h} + O(h^{r+1}),
\]
which proves the result.
\end{proof}

%%%%%%%%%%%%%%%%%

\subsection{Example: Standard lagrangians defined on the tangent bundle of a manifold. Discretizations of the standard Euler-Lagrange equations}\label{example1}

Let $L: TQ \to \mathbb{R}$ be an standard regular Lagrangian function defined on the tangent bundle of a manifold $Q$. $TQ$ is the Lie algebroid of the pair (banal) groupoid $Q \times Q$ over $Q$ (see Appendix \ref{algebroide-grupoide}).

As we know, the Euler-Lagrange equations for $L$ are 
\[
\displaystyle \frac{d}{dt}(\frac{\partial L}{\partial \dot{q}^{i}}) - \frac{\partial L}{\partial q^{i}} = 0.
\]
Moreover, if $q$ is a point of $Q$, then we can consider the exact retraction $R^{e-}_{h}$ associated with the standard \sode\ $\Gamma_L$ which is a diffeomorphism,
\[
R^{e-}_{h}: U \times U \subseteq Q \times Q \to {\mathcal U}\subseteq TQ,
\]
for a sufficiently small positive number $h$, an open subset $U \subseteq Q$, $q \in U$, and an open subset ${\mathcal U} \subseteq TQ$ (see Theorem \ref{convexity2}).

In addition, using (\ref{exact-lagrangian}), we deduce that the exact discrete Lagrangian function $\mathbb{L}_h^{e}$ associated with $L$ is defined by
\begin{equation}\label{exact-standar}
\mathbb{L}^{e}_h(q_0, q_1) = \int_{0}^{h} L(q(t), \dot{q}(t)) dt
\end{equation}
for $(q_0, q_1) \in U \times U \subseteq Q \times Q$, where $t \to q(t)$ is the unique solution of the Euler-Lagrange equations which satisfies
\[
q(0) = q_0, \; \; \; \; q(h) = q_1.
\]
Note that (\ref{exact-standar}) is the well-known expression of the exact discrete Lagrangian function associated with $L$ (see, for instance, \cite{MaWe}).

Now, our aim is to derive  a discretization for this exact discrete Lagrangian. Therefore,  we will use an auxiliary  riemannian metric ${\mathcal G}$ on $Q$ with associated geodesic spray 
$\Gamma_{\mathcal G}$. The associated exponential  for an enough small $h_0>0$ is
\[
exp^{\Gamma_\mathcal G}_{h_0}: D_{h_0}^{\Gamma}\subseteq TQ\rightarrow Q\times Q
\]
as in Section 3. 
Observe that 
\[
exp^{\Gamma_{\mathcal G}}_{h_0}(v)=(\tau_Q(v), exp_{\tau_Q(v)}(h_0v))
\]
where the last one is the standard exponential map on a riemannian manifold defined by
\[
exp_{\tau_Q(v)}(v)=\gamma_v(1)
\]
where $t\rightarrow \gamma_v(t)$ is the unique geodesic  such that $\gamma_v'(0)=v$.

In our case, it will be useful to use the following map (see Remark \ref{reweyls})
\begin{equation}\label{exp-tilde-h0}
\widetilde{exp}^{\Gamma_{\mathcal G}}_{h_0}(v)=(\gamma_v(-\frac{h_0}{2}), \gamma_v(\frac{h_0}{2}))\; .
\end{equation}
In the case of the Euclidean metric in $M={\mathbb R}^n$ we have that
\begin{equation}\label{exp-tilde-h0-est}
\widetilde{exp}^{\Gamma_{\mathcal G}}_{h_0}(q, u)=(q-\frac{h_0}{2}u,q+\frac{h_0}{2}u)\; .
\end{equation}
 Thus, a discretization of $\mathbb{L}^e_h$ would be
 \[
L^d_h(q_0, q_1)=h L((\widetilde{exp}^{\Gamma_{\mathcal G}}_{h})^{-1}(q_0, q_1))\; .
 \]
 In the case of $M={\mathbb R}^n$ we have that 
  \[
L^d_h(q_0, q_1)=h L(\frac{q_0+q_1}{2}, \frac{q_1-q_0}{h})\; .
 \]
 
\subsection{Example: Lagrangians defined on a Lie algebra. Discretizations of the Euler-Poincar\'e equations}

A particular case of our framework corresponds when the Lie algebroid $A$ is a Lie algebra ${\mathfrak g}$, a Lie algebroid over a single point. In such a case, we can consider a connected Lie group $G$ whose Lie algebra is ${\mathfrak g}$. $G$ is a Lie groupoid over a single point and the Lie algebroid of $G$ may be identified, in a natural way, with ${\mathfrak g}$ (see Appendix \ref{algebroide-grupoide}). Moreover, the Lie groupoid morphism $\psi: G\alpha \to G$ considered in the previous sections is just the map
\[
\psi: G \times G \to G, \; \; \; (g_0, g_1) \to g_0^{-1}g_1.
\]
$\psi$ is a morphism between the pair groupoid $G \times G$ and the Lie group $G$. The corresponding Lie algebroid morphism $\Psi: TG \to {\mathfrak g}$ is just the map
\[
\Psi(g, \dot{g}) = (T_gl_{g^{-1}})(\dot{g}) = g^{-1}\dot{g}.
\]
Now, suppose that ${\mathfrak l}: {\mathfrak g} \to \mathbb{R}$ is a hyperregular Lagrangian function on ${\mathfrak g}$.  
Then, the corresponding Euler-Lagrange equations for this function are the well-known Euler-Poincar\'e equations (see \cite{CeMaRa}): 
\begin{equation}\label{euler-poincare}
\frac{d}{dt}\left(\frac{\partial{\mathfrak l}}{\partial \xi}\right)=\hbox{ad}^*_{\xi}\frac{\partial {\mathfrak l}}{\partial \xi}\; .
\end{equation}
As in Sections \ref{exact-discrete-Lie-groupoid} and \ref{var-error-anal}, we can consider the standard (left-invariant) regular Lagrangian function $L = {\mathfrak l} \circ \Psi: TG \to \mathbb{R}$, that is,
\[
L (g, \dot{g}) = {\mathfrak l}(g^{-1}\dot{g}).
\]
It is well-known that if $\xi: I \subseteq \mathbb{R} \to {\mathfrak g}$ is a solution of the Euler-Poincar\'e equations then there exists a unique solution 
\[
t \to (g(t), \dot{g}(t))
\]
of the Euler-Lagrange equations for $L$ satisfying
\[
g(0) = {\mathfrak e} \mbox{ and } \dot{g}(t) = g(t) \xi(t) = (T_{\mathfrak e}l_{g(t)})(\xi(t)), \; \; \; \mbox{ for every } t \in I,
\]
where ${\mathfrak e}$ is the identity element in $G$ (see \cite{CeMaRa}).

Thus, from (\ref{exact-lagrangian-1}), we have that if $h > 0$ is sufficiently small and $U$ is a suitable open neighborhood of ${\mathfrak e}$, the exact discrete Lagrangian function associated with ${\mathfrak l}$ is given by
\[
{\mathfrak l}_h^{e}(g) = \int_{0}^{h} {\mathfrak l}(\xi_g(t))dt, \; \; \; \mbox{ for } g \in U,
\]
where $\xi_g: I \subseteq \mathbb{R} \to {\mathfrak g}$ is the unique solution of the Euler-Poincar\'e equations for ${\mathfrak l}: {\mathfrak g} \to \mathbb{R}$ such that the corresponding solution $(g, \dot{g}): I \subseteq \mathbb{R} \to TG$ of the Euler-Lagrange equations for $L$ satisfies
\[
g(0) = {\mathfrak e}, \; \; \; g(h) = g.
\]    
Therefore, we have that
\begin{eqnarray}\label{depe}
{\mathfrak l}_h^{e}(g) =\int_0^h{\mathfrak l}(\xi_g(t))\, dt=\int^h_0 L(g(t), \dot{g}(t))\, dt 
&=& \int^h_0 \tilde{L}_{exp}(\eta(t), \dot{\eta}(t))\, dt 
\end{eqnarray}
where 
$\eta(t)={exp}^{-1}(g(t))$ and $\dot{\eta}(t)=T_{g(t)} {exp}^{-1} (\dot{g}(t))$ and 
$\tilde{L}_{exp}: T{\mathfrak g}\equiv {\mathfrak g}\times \mathfrak{g}\longrightarrow {\mathbb R}$ is defined by $\tilde{L}_{ exp}(\eta(t), \dot{\eta}(t))=L(g(t), \dot{g}(t))$. 
Here, ${exp}: {\mathfrak g}\rightarrow { G}$ is the exponential map which for  a finite-dimensional Lie group $G$ is a local diffeomorphism. 
The following diagram illustrates the situation: 
\[
\xymatrix{ 
{\mathbb R}&T{\mathfrak g}\ar[l]_{\tilde{L}_{exp}}
\ar[d]_{\tau_{\mathfrak g}} \ar[rr]^{T{exp}}&    &TG \ar[d]_{\tau_G}\ar[r]^{L}&{\mathbb R} \\
&{\mathfrak g}\ar[rr]^{{{exp}}}&  & G& }
\]

Now, since $\tilde{L}_{exp}: T{\mathfrak g}\rightarrow {\mathbb R}$ is a Lagrangian defined on a tangent bundle of a vector space is quite simple to consider discretizations by truncating the last integral in (\ref{depe}) as, for instance, in the procedures given 
in \cite{Leok,LeSh,MaWe}.

For instance, we start analyzing the following typical  discretization of the action where $\alpha\in[0,1]$
\begin{eqnarray*}
{\mathfrak l}_h^{d, \alpha}(g)&=&h\tilde{L}_{exp}(\alpha{\eta}, {\eta}/h)\\
&=&hL(exp(\alpha\eta), T_{\alpha\eta}exp(\eta/h))\\
&=&h{\mathfrak l}(d_l exp_{\alpha\eta}(\eta/h))
\end{eqnarray*}
where $\eta=exp^{-1}(g)$ and $\mbox{d}_l {exp}_{\xi}:\mathfrak{g}\rightarrow\mathfrak{g}$ is  defined  as follows
  \begin{eqnarray*}
    T_{\xi}{exp} (\eta)&=&T_{\frak e} l_{{exp}(\xi)}\left(\mbox{d}_l{exp}_{\xi}(\eta)\right)={exp}(\xi)\mbox{d}_l{exp}_{\xi}(\eta)\, , \ \hbox{for all}\    \xi, \eta\in\mathfrak{g}\; ,
    \end{eqnarray*}
and it is called the left-trivialized derivative of the map $exp$.
 Using  Theorem 1.7, Chapter II, in \cite{Helga} (see also \cite{HLW,IMNZ}), we have  that 
 \[
 \mbox{d}_l{exp}_{\xi}(\eta)=\frac{1-e^{-ad_{\xi}}}{ad_{\xi}}(\eta)=\sum_{j=0}^{\infty}\frac{(-1)^j ad_\xi^j\eta}{(j+1)!}\; .
 \]
 Therefore, $d_l exp_{\alpha\eta}(\eta/h)=\eta/h$ and, in consequence, 
\[
{\mathfrak l}_h^{d, \alpha}(g)=h{\mathfrak l}(\frac{{exp}^{-1}(g)}{h})\; ,
 \]
 and it is independent of the parameter $\alpha\in [0,1]$ (see also \cite{MaPeSh}). 
 Starting with  the midpoint discretization on $T{\mathfrak g}$, that is, $\alpha=1/2$, automatically the associated  discrete Lagrangian evolution operator is second-order, that is, 
\[
F_{{\mathfrak l}_h^{d, \alpha}}=F_{{{\mathfrak l}}_h^e}+O(h^3)
\]
 Observe that the unique role played by the exponential map is to locally  transform the Lie group $G$ into the vector space ${\mathfrak g}$. Indeed, we can use a different map for this task. This is typically accomplished using  a local
diffeomorphism $\tau: {\mathfrak g}\to G$ such that $\tau(0)={\frak e}$ (this is, for instance, the case of retraction maps \cite{Rabee}). 
Using $\tau$ instead of $exp$ we obtain the alternative  discretization
\begin{equation}\label{rew}
{\mathfrak l}_h^{d, \tau, \alpha}(g)=h{\mathfrak l}(d_l\tau_{\alpha\tau^{-1}(g)}(\tau^{-1}(g)/h)))\; .
 \end{equation}
where $\alpha\in [0,1]$ and
the left-trivialized derivative $\mbox{d}_l\tau_{\xi}:\mathfrak{g}\rightarrow\mathfrak{g}$ is defined   for all   $\eta\in\mathfrak{g}$ as follows
  \begin{eqnarray*}
    T_{\xi}\tau (\eta)&=&T_{\frak e} l_{\tau(\xi)}\left(\mbox{d}_l\tau_{\xi}(\eta)\right)=\tau(\xi) \mbox{d}_l\tau_{\xi}(\eta).
      \end{eqnarray*}
Another option is to consider the symmetrized discrete Lagrangian
\[
{\mathfrak l}_h^{sym,\tau, \alpha} =\frac{1}{2} {\mathfrak l}_h^{d, \tau, \alpha}+ \frac{1}{2}{\mathfrak l}_h^{d, \tau, 1-\alpha}
\]
 giving a method that is second-order for any $\alpha\in [0,1]$ (see \cite{MaWe}). It is a Lie group version of the classical  St\"ormer-Verlet method when $\alpha=0$.

Applying the same idea it is possible to derive new discretizations  based on 
symplectic partitioned Runge-Kutta methods \cite{BoMa,Mart}. Indeed, consider the following discrete lagrangian
\[
{\mathfrak l}^{RK,\tau}_h(g)=h\sum_{i=1}^s b_i {\mathfrak l}(d_l\tau_{\xi_i}(\eta_i)))
\]
where $\xi_i=h\sum_{j=1}^s a_{ij}\eta_j$ and where $\eta_i$ are chosen to extremize ${\mathfrak l}^{RK,\tau}_h$  under the constraint  $\tau^{-1}(g)=h\sum_{j=1}^s b_j\eta_j$ and 
where $b_i, a_{ij}$ are the coefficients of the  Runge-Kutta method.
Of course, other higher-order methods can be adapted to this framework, as for instance, variational Runge-Kutta-Munthe-Kaas integrators \cite{BoMa}
taking 
\[
{\mathfrak l}^{RKMK, \tau}_h(g)=h\sum_{i=1}^s b_i {\mathfrak l}(d_l\tau_{\xi_i}(\eta_i)))
\]
with  $\xi_i=h\sum_{j=1}^s a_{ij}d_l\tau_{\xi_j}^{-1}(\eta_j)$ and where $\eta_i$ are chosen to extremize ${\mathfrak l}_h^{RKMK, \tau}$  under the constraint  $\tau^{-1}(g)=h\sum_{j=1}^s b_jd_l\tau_{\xi_j}^{-1}(\eta_j)$.

\subsubsection{Lie groups of matrices}
In most of the examples (which come from Mechanics)  $G$ is a Lie subgroup of $ GL(n, {\mathbb R})$. Consider the corresponding Lie subalgebra ${\mathfrak g}\subseteq gl(n, {\mathbb R})$ and ${\mathfrak l}: {\mathfrak g}\rightarrow {\mathbb R}$ a regular Lagrangian admitting a regular extension ${\mathfrak l}_{ext}: {gl}(n, {\mathbb R}) \rightarrow {\mathbb R}$. 
In the most typical situation, we start from an inner product  $\langle \; ,\; \rangle_{\mathfrak g}: {\mathfrak g}\times {\mathfrak g}\rightarrow {\mathbb R}$ on the Lie algebra ${\mathfrak g}$ and the Lagrangian ${\mathfrak l}$ is given by: 
\[
{\mathfrak l}(\xi)=\frac{1}{2}\langle \xi, \xi\rangle_{\mathfrak g}, \; \; \; \mbox{ for } \xi \in {\mathfrak g}.
\]
Taking an arbitrary vector subspace ${\mathfrak h}$ such that 
 ${gl}(n, {\mathbb R})={\mathfrak g}\oplus {\mathfrak h}$, equipped  with an arbitrary inner product $\langle\; ,\; \rangle_{\mathfrak h}$, we decompose any element $a\in {gl}(n, {\mathbb R})$ as
 $a=a_1+a_2$ where $a_1\in {\mathfrak g}$ and $a_2\in {\mathfrak h}$. A regular extension of ${\mathfrak l}$ is 
 \[
 {\mathfrak l}_{ext}(a)= \frac{1}{2}\langle a_1, a_1\rangle_{\mathfrak g}+ \frac{1}{2}\langle a_2, a_2\rangle_{\mathfrak h}
\]

Given a regular extension ${\mathfrak l}_{ext}: {gl}(n, {\mathbb R}) \rightarrow {\mathbb R}$ it is easy to derive different discretizations. For instance, consider the map: 
\[
\begin{array}{rrcl}
\tau:&  {gl}(n, {\mathbb R})&\longrightarrow&  {Gl}(n, {\mathbb R})\\
        & a&\longmapsto&I+ha\; .
\end{array}
\]
Note that $\tau$ is a diffeomorphism from an open neighborhood of the zero matrix in  ${gl}(n, {\mathbb R})$ on an open neighborhood of the identity matrix $I$ in ${Gl}(n, {\mathbb R})$. The inverse of $\tau$ is given by $\tau^{-1}(A)=\frac{A-I}{h}$.  
Using the discretization (\ref{rew}) for the lagrangian ${\mathfrak l}_{ext}$ we have that
\[
\begin{array}{rrcl}
({\mathfrak l}_{ext})^{d, \tau, \alpha}_h:& {Gl}(n, {\mathbb R})&\longrightarrow& {\mathbb R}\\
                                    &A&\longmapsto &h{\mathfrak l}_{ext}(((1-\alpha)I+\alpha A)^{-1}\left(\displaystyle\frac{A-I}{h}\right))
\end{array}
\]                                    
Now, it is only necessary to consider the restriction of this discrete Lagrangian to $G$ to derive a discretization of ${\mathfrak l}: {\mathfrak g}\rightarrow {\mathbb R}$, that is, 
${\mathfrak l}^{d, \tau, \alpha}_h: G\rightarrow \R$ is defined by
\[
{\mathfrak l}^{d, \tau, \alpha}_h(A)=({\mathfrak l}_{ext})^{d, \tau, \alpha}_h(A), \hbox{  with  }  A\in G\; .
\]
Observe that, in general, $((1-\alpha)I+\alpha A)^{-1}\left(\displaystyle\frac{A-I}{h}\right)\notin {\mathfrak g}$ and then it is not possible to use the lagrangian ${\mathfrak l}: {\mathfrak g}\rightarrow {\mathbb R}$ instead of its extension ${\mathfrak l}_{ext}$. 

The order of the method is directly derived from the order of the extension $({\mathfrak l}_{ext})^{d, \tau, \alpha}_h$. For instance, for $\alpha=1/2$ we derive a second order method using  
\[
{\mathfrak l}^{d, \tau, 1/2}_h(A)=h{\mathfrak l}_{ext}(\left(\frac{I+A}{2}\right)^{-1}\left(\displaystyle\frac{A-I}{h}\right))\;.
\]
An alternative second order method is derived using 
\[
{\mathfrak l}_h^{sym,\tau, 0}(A) =\frac{h}{2}\left({\mathfrak l}_{ext}(\displaystyle\frac{A-I}{h}) +{\mathfrak l}_{ext}(A^{-1}\left(\displaystyle\frac{A-I}{h}\right))\right)\; .
\]

\subsection{Example: Lagrangians defined on a trivial principal bundle. Discretization of the Lagrange-Poincar\'e equations}

Let $K$ be a Lie group with Lie algebra ${\mathfrak k}$ and $M$ a smooth manifold. Then, we can consider the tangent lift of the standard left action of $K$ on $K \times M$. As we know, the space of orbits of this action of $K$ on $T(K \times M)$ is just the Atiyah algebroid associated with the trivial principal $K$-bundle $pr_2: K \times M \to M$, that is, 
\[
T(K \times M)/K \simeq {\mathfrak k} \times TM \to M.
\]
Note that using left-trivialization, we can identify the tangent bundle $T(K \times M)$ with the space $(K \times {\mathfrak k}) \times TM$ and, under this identification, the action of $K$ on $T(K \times M)$ is given by
\[
k ((k', \xi), (x^{i}, \dot{x}^{i})) = ((k k', \xi), (x^{i}, \dot{x}^{i})),
\]
for $k \in K$, $(k', \xi) \in K \times {\mathfrak k}$ and $(x^{i}, \dot{x}^{i}) \in TM$. So, it is clear that the space of orbits $T(K \times M)/K$ of this action is diffeomorphic to the product ${\mathfrak k} \times TM$.

Moreover, if $(\xi, X), (\eta, Y) \in {\mathfrak k} \times {\mathfrak X}(M)$ are projectable sections of the vector bundle ${\mathfrak k} \times TM \to M$ then the Lie bracket of $(\xi, X)$ and $(\eta, Y)$ is given by
\[
\lcf (\xi, X), (\eta, Y) \rcf = ([\xi, \eta]_{\mathfrak k}, [X, Y]),
\]
where $[\cdot, \cdot]_{\mathfrak k}$ is the Lie bracket in ${\mathfrak k}$ and $[\cdot, \cdot]$ is the standard Lie bracket of vector fields on $M$.

In addition, the anchor map in ${\mathfrak k} \times TM \to M$ is defined by
\[
\rho(\xi, X) = X.
\]
In this case, the product manifold $G = H \times (M \times M)$ is a Lie groupoid over the manifold $M$ and its Lie algebroid is just the Atiyah algebroid ${\mathfrak k} \times TM \to M$. In fact, the Lie groupoid structure on $G \times (M \times M)$ is just "the product" of the Lie group structure on $G$ and the pair Lie groupoid structure on $M \times M$.

Now, suppose that $L: T(K \times M) \simeq (K \times {\mathfrak k}) \times TM  \to \mathbb{R}$ is a left-invariant regular Lagrangian function on $T(K \times M)$. This means that
\[
L(g, \xi, x^{i}, \dot{x}^{i}) = L({\mathfrak e}, \xi, x^{i}, \dot{x}^{i}),
\]
where ${\mathfrak e}$ is the identity element in $K$.

Thus, $L$ defines a reduced regular Lagrangian function ${\mathfrak l}$ on the Atiyah algebroid
\[
{\mathfrak l}: {\mathfrak k} \times TM \to \mathbb{R}, \; \; \; (\xi, x^{i}, \dot{x}^{i}) \to {\mathfrak l}(\xi, x^{i}, \dot{x}^{i}) = L({\mathfrak e}, \xi, x^{i}, \dot{x}^{i}).
\]
The corresponding Euler-Lagrange equations for this function are the well-known Lagrange-Poincar\'e equations (see \cite{CeMaRa}): 
   \begin{equation}\label{lpe}
   \frac{d}{dt}\left(\frac{\partial {\mathfrak l}}{\partial \dot{x}^i}\right)-\frac{\partial {\mathfrak l}}{\partial x^i}=0, \qquad \frac{d}{dt}\left(\frac{\partial {\mathfrak l}}{\partial \xi}\right)=\hbox{ad}^*_{\xi}\frac{\partial {\mathfrak l}}{\partial \xi}, \qquad \dot{x}^{i} = \frac{dx^{i}}{dt}.
 \end{equation}
 It is well-known that if $(\xi, x): I \subseteq \mathbb{R} \to {\mathfrak k} \times M$ is a solution of the Lagrange-Poincar\'e equations then there exists a unique solution
 \[
 t \to (k(t), x^{i}(t))
\]
of the Euler-Lagrange equations for $L$ satisfying
\[
k(0) = {\mathfrak e} \; \; \mbox{ and } \dot{k}(t) = k(t) \xi(t) = (T_{\mathfrak e} l_{k(t)})(\xi(t))
\]
(see \cite{CeMaRa}).

Now, for a point $x \in M$, using the results in Section \ref{exact-discrete-Lie-groupoid}, we have that there is a sufficiently small positive number $h > 0$, an open neighborhood $V$ of ${\mathfrak e}$ and an open neighborhood $W$ of $x$, such that the exact discrete Lagrangian function ${\mathfrak l}^{e}_{h}: U = V \times (W \times W) \subseteq K \times (M \times M) \to \mathbb{R}$ is given by
\[
{\mathfrak l}^{e}_{h}(k, x_0, x_1) = \int_{0}^{h} {\mathfrak l}((\xi, x)_{(k, x_0,x_1)}(t))dt
\]
where $(\xi, x)_{(k, x_0,x_1)}: I \subseteq \mathbb{R} \to {\mathfrak k} \times M$ is the unique solution of the Lagrange-Poincar\'e equations for ${\mathfrak l}: {\mathfrak k} \times TM \to \mathbb{R}$ such that the corresponding solution of the Euler-Lagrange equations for $L$ satisfies
\[
k(0) = {\mathfrak e}, \; \; k(h) = k, \; \; x(0) = x_0, \; \; x(h) = x_1.
\]
Next, as in Example \ref{example1}, our aim is to derive a discretization of ${\mathfrak l}_{h}^{e}$. We will use an auxiliar Riemannian metric ${\mathcal G}$ on $M$, with associated geodesic spray $\Gamma_{\mathcal G}$, and for a sufficiently small positive number $h_0 > 0$ we will denote by $exp_{h_0}^{\Gamma_{\mathcal G}}$ the corresponding associated exponential map to $\Gamma_{\mathcal G}$ and by $\widetilde{exp}^{\Gamma_{\mathcal G}}_{h_0}$ the map given by (\ref{exp-tilde-h0}). 

So,  a discretization of ${\mathfrak l}^e_h$ would be
 \[
{\mathfrak l}^d_h(k, x_0, x_1)=h {\mathfrak l}(\frac{{exp}^{-1}(k)}{h}, (\widetilde{exp}^{\Gamma_{\mathcal G}}_{h})^{-1}(x_0, x_1))\; .
 \]
 In the particular case when $M={\mathbb R}^n$ and ${\mathcal G}$ is the Euclidean metric on $\mathbb{R}^n$, we have that 
  \[
{\mathfrak l}^d_h(k, x_0, x_1)=h {\mathfrak l}( \frac{{exp}^{-1}(k)}{h}, \frac{x_0+x_1}{2}, \frac{x_1-x_0}{h})\; .
 \]

\section{{ Conclusions and future work}}

{ In this paper, we obtained the exact discrete Lagrangian function for a regular continuous Lagrangian function which is defined on the total space of a Lie algebroid. For this purpose, we discussed convexity theorems for second order differential equations on Lie algebroids.}

{ Lagrangian systems on Lie algebroids appear, in a natural way, after the reduction of standard Lagrangian systems which are invariant under the action of a symmetry Lie group. So, our results are interesting for the discussion of the variational error analysis associated with discrete Lagrangian systems which are obtained after the reduction of symmetric Lagrangian systems. In particular, our results are applied to the discretization of the Euler-Poincar\'e and Lagrange-Poincar\'e equations.}

{ Anyway, more work about these topics must be done and, in some future papers, we are planning to address the following problems:}
 
$\bullet$ {\bf Discrete vakonomic mechanics and optimal control theory.} { In some recent papers, some authors \cite{GrGr,IMMD} have discussed the extension of the classical constrained variational calculus (also called vakonomic mechanics) to Lie algebroids using an intrinsic formulation which completely clarifies the geometry of these systems.  They have interesting applications to subriemannian geometry, optimal control theory etc.
The geometry of these  systems is usually described by two data, a Lagrangian $L: AG\to \mathbb{R}$ and a submanifold $N$ of $AG$. The dynamics is derived using variational procedures and the equations typically include the evolution of additional variables (momenta, Lagrange multipliers, costate variables, depending on the context). 
Therefore, we have a set of second order differential equations which are coupled with first order equations. Obviously, in this case, it is not possible to use the techniques developed in this paper without a deep adaptation. 

An interesting  possibility to explore in the future is to assume that the Lagrangian $L$ is regular. Using the associated \sode\ $\Gamma_L$, then locally we can discretize the submanifold $N$ to a submanifold 
${\mathbb{N}}^{e}_h$ of $G$ applying the  exponential map $exp^{\Gamma_L}_h$. Moreover, we have the  exact discrete Lagrangian function ${\mathbb{L}}^{e}_h: G \to \mathbb{R}$ associated with $L: AG \to \mathbb{R}$. 
Now, with these two ingredients $({\mathbb{L}}^{e}_h, {\mathbb{N}}^{e}_h)$, it is possible to derive, applying discrete constrained variational calculus \cite{MaMaSt}, a discrete evolution operator.  We will study in a future paper the possible relation of this discrete evolution operator with the continuous dynamics of the constrained variational system determined by $L$ and $N$. 
}

$\bullet$ {\bf Discrete non-holonomic mechanics}. {Non-holonomic mechanical systems on Lie algebroids have been discussed, very recently, in several papers (see, for instance, \cite{CoLeMaMa,GrLeMaMa,LeMaMa1}). In this setting, a continuous regular non-holonomic Lagrangian system on a Lie algebroid $A$ over a manifold $M$ is a triple $(A, L, D)$, where $L:A \to \mathbb{R}$ is a regular Lagrangian function and $D$ is a vector subbundle of $A$ over $M$, the constraint distribution, which is not a Lie subalgebroid. The non-holonomic dynamics is given by a second order differential equation $\Gamma_{L, D}$ along $D$.}

{ On the other hand, a regular discrete non-holonomic Lagrangian system on a Lie groupoid $G$ over $M$ is determined by: i) a regular discrete Lagrangian function $L_d: G \to \mathbb{R}$ on $G$; ii) a constraint bundle $D_c$, which is a vector subbundle of the Lie algebroid $AG$ of $G$ and iii) a discrete constraint embedded submanifold ${\mathcal M}_d$ of $G$ such that $dim ({\mathcal M}_d) = dim (D_c)$. A solution of the discrete non-holonomic system is a sequence in ${\mathcal M}_d$ which satisfies the corresponding discrete Holder's principle (see \cite{CoMa,IgMaMaMa}).}

{ As in the unconstrained case and for the variational error analysis, it would be interesting to associate with every regular continuous non-holonomic Lagrangian system $(G, L, D)$ on $AG$ a regular exact discrete non-holonomic Lagrangian system on $G$. It is clear that the discrete constraint distribution of this system will be just $D$ and, using the results in this paper, we can consider the exact regular discrete Lagrangian function ${\mathbb{L}}^{e}_h: G \to \mathbb{R}$ associated with $L: AG \to \mathbb{R}$.}  

{ Now, in order to construct the discrete constraint submanifold, we could proceed as follows. First of all, we should extend the convexity theorems in this paper for the more general case of second order differential equations which are only defined along vector subbundles of $AG$. Then, the discrete constraint submanifold ${\mathcal M}_h^{e}$ will be the image in $G$ of the exponential map $exp^{\Gamma_{L, D}}_h$ associated with the constrained second order differential equation $\Gamma_{L,D}$. So, $D$ and ${\mathcal M}_h^{e}$ will be diffeomorphic.}

{ Finally, we must check that, under the previous diffeomorphism, the evolution of this discrete system for a sufficiently small positive number $h > 0$ is just the flow of $\Gamma_{L, D}$ at time $h$.} 
 
\appendix
\section{Proof of Theorem \ref{convexity1}} 
\label{Hartmann}

In this appendix, we will give a proof of Theorem \ref{convexity1}.
For this purpose, we will use some standard results on second order differential equations on $\R^n$ (see \cite{Ha}).

Let
\[
\displaystyle \frac{d^2q^{i}}{dt^2} = \xi^{i}(q^j, \frac{dq^j}{dt}), \; \; \forall i \in \{1, \dots, n\}
\]
be a system of second order differential equations on $\R^n$, with $\xi^{i}$ a real function on an open subset of $\R^{2n}$.

We will consider the problem of the existence of solutions satisfying the boundary conditions
\[
q^{i}(0) = 0, \; \; \; q^{i}(h_0) = q^{i}_{0}, \; \; \forall i, \mbox{ with } h_0 > 0.
\]
Then, we have the following result
\begin{theorem}\label{Hartman}(see Corollary 4.1 of  Chapter XII in \cite{Ha}) 
Let $\xi^{i}(q, \dot{q})$ be continuous for $1 \leq i \leq n$, $\|q\| \leq R_0$, $\|\dot{q}\| \leq R_1$
such that $f$ satisfies a Lipschitz condition with respect to $q, \dot{q}$ of the form
\[
\| \xi(q_1^j, \dot{q}^j_1) - \xi(q_2^j, \dot{q}_2^j)\| \leq \theta_1 \| q_2 - q_1\| + \theta_2 \| \dot{q}_2 - \dot{q}_1 \|
\]
with Lipschitz constants $\theta_1, \theta_2$, so small that
\[
\frac{\theta_1 h_0^2}{8} + \frac{\theta_2 h_0}{2} < 1.
\]
In addition, suppose that $\| \xi(q^j, \dot{q}^j) \| \leq M$ and that
\[
\frac{M h_0^2}{8} + \| q_0\| \leq R_0, \; \; \; \frac{M h_0}{2} + \frac{\| q_0\|}{h_0} \leq R_1.
\]
Then, the system of second order differential equations
\[
\frac{d^2 q^{j}}{dt^2} = \xi^{j} (q^{i}, \dot{q}^{i}), \; \; \mbox{ for all } j
\]
has a unique solution satisfying
\[
q^{i}(0) = 0, \; \; \mbox{ and } q^{i}(h_0) = q_{0}^{i}, \; \; \mbox{ for all } i.
\]
\end{theorem}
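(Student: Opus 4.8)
The plan is to recast this two-point boundary value problem as a fixed-point equation for an integral operator built from the Green's function of the Dirichlet problem, and then apply the Banach contraction principle. The whole point of the quantitative hypotheses on $h_0,\theta_1,\theta_2,M,R_0,R_1$ is that they are precisely what makes a suitable ball invariant under the operator and makes the operator a contraction. First I would introduce the Green's function $G(t,s)$ on $[0,h_0]$ for the operator $d^2/dt^2$ with homogeneous Dirichlet conditions, namely
\[
G(t,s)=
\begin{cases}
 t(s-h_0)/h_0, & 0\le t\le s,\\
 s(t-h_0)/h_0, & s\le t\le h_0,
\end{cases}
\]
which is continuous, is affine in $t$ off the diagonal, has the unit jump of $\partial_t G$ across $t=s$, and vanishes at both endpoints. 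The two elementary estimates that drive everything are
\[
\sup_{t}\int_0^{h_0}|G(t,s)|\,ds\le \frac{h_0^2}{8}
\qquad\text{and}\qquad
\sup_{t}\int_0^{h_0}|\partial_t G(t,s)|\,ds\le \frac{h_0}{2},
\]
both of which I would verify directly from the explicit formula (the first supremum is attained at $t=h_0/2$, the second at the endpoints).

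Next I would remove the inhomogeneous boundary data by writing $q(t)=\ell(t)+u(t)$ with $\ell(t)=(t/h_0)q_0$, so that $\ell(0)=0$, $\ell(h_0)=q_0$, $\|\ell\|_\infty\le\|q_0\|$ and $\|\ell'\|_\infty=\|q_0\|/h_0$, while $u$ must satisfy the homogeneous conditions $u(0)=u(h_0)=0$. A map $q$ solves the boundary value problem if and only if $u$ is a fixed point of
\[
(Tu)(t)=\int_0^{h_0}G(t,s)\,\xi\bigl(\ell(s)+u(s),\,\ell'(s)+u'(s)\bigr)\,ds .
\]
I would let $T$ act on the closed ball $B=\{u\in C^1_0:\ \|u\|_\infty\le Mh_0^2/8,\ \|u'\|_\infty\le Mh_0/2\}$ inside the space $C^1_0$ of $C^1$ maps vanishing at both endpoints. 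The self-mapping property $T(B)\subseteq B$ is immediate from $\|\xi\|\le M$ together with the two Green's-function bounds, and the inequalities $Mh_0^2/8+\|q_0\|\le R_0$ and $Mh_0/2+\|q_0\|/h_0\le R_1$ are exactly what guarantees that, for $u\in B$, the argument $(\ell+u,\ell'+u')$ remains inside the region $\|q\|\le R_0,\ \|\dot q\|\le R_1$ on which $\xi$ and its Lipschitz estimate are available.

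The crucial step is the contraction, and here I would use the weighted norm $\|w\|_*=\theta_1\|w\|_\infty+\theta_2\|w'\|_\infty$ on $C^1_0$, which is equivalent to the usual $C^1$-norm (so $C^1_0$ is complete for it). Writing $w=u_1-u_2$ and applying the Lipschitz hypothesis under the integral sign, the two Green's-function bounds give $\|Tu_1-Tu_2\|_\infty\le (h_0^2/8)(\theta_1\|w\|_\infty+\theta_2\|w'\|_\infty)$ and $\|(Tu_1)'-(Tu_2)'\|_\infty\le (h_0/2)(\theta_1\|w\|_\infty+\theta_2\|w'\|_\infty)$; combining these as $\theta_1(\cdot)_\infty+\theta_2(\cdot)'_\infty$ collapses the estimate to
\[
\|Tu_1-Tu_2\|_*\le\Bigl(\frac{\theta_1 h_0^2}{8}+\frac{\theta_2 h_0}{2}\Bigr)\|u_1-u_2\|_* ,
\]
so by the standing hypothesis $T$ is a contraction. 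The Banach fixed-point theorem then produces a unique $u\in B$, and $q=\ell+u$ is the desired solution; regularity is automatic, since $u''=\xi(\ell+u,\ell'+u')$ is continuous, whence $q\in C^2$ and solves the system with the prescribed endpoints.

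I expect the main difficulty to be bookkeeping rather than conceptual. The two genuinely delicate points are, first, computing the Green's-function integral bounds sharply enough to obtain the constants $h_0^2/8$ and $h_0/2$, and second, identifying the correct weighted norm $\|\cdot\|_*$ that pairs $\theta_1$ with the bound on $G$ and $\theta_2$ with the bound on $\partial_t G$, since it is only in this norm that the contraction constant is exactly $\theta_1 h_0^2/8+\theta_2 h_0/2$. Finally, to upgrade uniqueness from ``unique in $B$'' to uniqueness for the boundary value problem, I would note that any $C^2$ solution has $\|q''\|=\|\xi\|\le M$ and hence, through the Green representation, automatically satisfies the defining bounds of $B$, so no admissible solution can escape the ball on which the contraction principle is applied.
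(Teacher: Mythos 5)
Your proof is correct, but note that the paper itself does not prove this statement at all: Theorem \ref{Hartman} is imported as a black box from the literature (it is exactly Corollary 4.1 of Chapter XII of Hartman's book \cite{Ha}) and is then used to prove Theorem \ref{convexity1} in Appendix \ref{Hartmann}. So there is no internal proof to compare against; what you have written is a self-contained proof of the cited result, and it is essentially the classical argument underlying it. Your two Green's function estimates, $\sup_t\int_0^{h_0}|G(t,s)|\,ds\le h_0^2/8$ and $\sup_t\int_0^{h_0}|\partial_t G(t,s)|\,ds\le h_0/2$, are sharp and are precisely the source of the two smallness conditions in the statement; the affine shift $\ell(t)=(t/h_0)q_0$ accounts exactly for the terms $\|q_0\|$ and $\|q_0\|/h_0$ in the hypotheses $\frac{Mh_0^2}{8}+\|q_0\|\le R_0$ and $\frac{Mh_0}{2}+\frac{\|q_0\|}{h_0}\le R_1$; the weighted norm $\|w\|_*=\theta_1\|w\|_\infty+\theta_2\|w'\|_\infty$ yields the contraction constant $\theta_1 h_0^2/8+\theta_2 h_0/2<1$ with no slack; and your closing observation --- that any solution of the boundary value problem satisfies the Green representation and hence the defining bounds of the ball $B$ --- is what legitimately upgrades uniqueness-in-$B$ to the uniqueness asserted in the theorem, a point that is often glossed over. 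One cosmetic caveat: $\|\cdot\|_*$ is equivalent to the $C^1$ norm (hence $C^1_0$ is complete for it) only when $\theta_1>0$ and $\theta_2>0$; if one of the Lipschitz constants vanishes you should either enlarge it slightly (the strict inequality $\theta_1 h_0^2/8+\theta_2 h_0/2<1$ survives a small perturbation) or observe that those degenerate cases reduce to a contraction in the sup norm of $u$ or of $u'$ alone.
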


Now, we may prove Theorem \ref{convexity1}.

\begin{proof}({\it proof of Theorem \ref{convexity1}})

Let $(\tilde{U}, \tilde{\varphi} \equiv (q^i))$ be a local chart
on $Q$ such that
\[
\tilde{\varphi}(\tilde{U}) = B(0; \epsilon) \; \mbox{ and }
\tilde{\varphi}(q_0) = (0, \dots , 0),
\]
where $B(0; \epsilon)$ is the open ball in $\mathbb{R}^n$ of
center the origin and radius $\epsilon > 0$.

We consider the corresponding local coordinates
$(\tau_{TQ}^{-1}(\tilde{U}), \bar{\varphi} \equiv (q^i, v^i))$ on
$TQ$. Note that $\bar{\varphi}(\tau_{TQ}^{-1}(\tilde{U})) =
\tilde{\varphi}(\tilde{U}) \times \mathbb{R}^n$. Since $\Gamma$ is
a \sode, we also have that
\[
\Gamma = \displaystyle v^i \frac{\partial}{\partial q^i} + \xi^{i}
(q, v) \frac{\partial}{\partial v^i}.
\]
Then, the trajectories of $\Gamma$ in $\tilde{U}$ are the
solutions of the system of second order differential equations
\[
\displaystyle \frac{d^2q^i}{dt^2} = \xi^i(q, \frac{dq}{dt}), \; \;
\; \mbox{ for all } i.
\]
Now, using that $\xi^i \in C^{\infty}(B(0; \epsilon) \times
\mathbb{R}^n)$, we deduce that it is possible to choose $R_0, R_1 > 0$, 
$\theta_0, \theta_1 \in \R$, $h_0 > 0$ and $M \in \R$ as in Theorem \ref{Hartman}.
Thus, if we take the open subset $U$ of $Q$ defined by $U =
\tilde{\varphi}^{-1}(B(0; R))$, with $R = min (R_{0} -
\displaystyle \frac{Mh_{0}^2}{8}, h_{0}R_{1} - \displaystyle
\frac{Mh_{0}^{2}}{2})$, we conclude that for
every $q_{1} \in U$ there exists a unique trajectory
$\sigma_{q_{0}q_{1}}: [0, h_{0}] \to \tilde{U} \subseteq Q$ of
$\Gamma$ such that
\[
\sigma_{q_{0}q_{1}}(0) = q_0, \; \; \; \sigma_{q_{0}q_{1}}(h_0) =
q_1.
\]
This ends the proof of the result.
\end{proof}

\section{Lie algebroids and groupoids}\label{algebroide-grupoide}

First of all, we will recall the definition of a Lie groupoid
and some generalities about them are explained (for more details,
see \cite{Mac}).

A groupoid over a set $M$ is a set $G$ together with the
following structural maps:
\begin{itemize}
\item A pair of maps $\alpha: G \to M$, the source, and
$\beta: G \to M$, the target. Thus, an element $g \in G$ is
thought as an arrow from $x= \alpha(g)$ to $y = \beta(g)$ in $M$
$$
\xymatrix{*=0{\stackrel{\bullet}{\mbox{\tiny
 $x=\alpha(g)$}}}{\ar@/^1pc/@<1ex>[rrr]_g}&&&*=0{\stackrel{\bullet}{\mbox{\tiny
$y=\beta(g)$}}}}
$$
The maps $\alpha$ and $\beta$ define the set of composable pairs
$$
G_{2}=\{(g,h) \in G \times G / \beta(g)=\alpha(h)\}.
$$
\item A multiplication $m: G_{2} \to G$, to be denoted simply by $m(g,h)=gh$, such that
\begin{itemize}
\item $\alpha(gh)=\alpha(g)$ and $\beta(gh)=\beta(h)$.
\item $g(hk)=(gh)k$.
\end{itemize}
If $g$ is an arrow from $x = \alpha(g)$ to $y = \beta(g)
$ and $h$ is an arrow from $y=\beta(g) = \alpha(h)$ to $z = \beta(h)$ then
$gh$ is the composite arrow from $x$ to $z$
$$\xymatrix{*=0{\stackrel{\bullet}{\mbox{\tiny
 $x=\alpha(g)=\alpha(gh)$}}}{\ar@/^2pc/@<2ex>[rrrrrr]_{gh}}{\ar@/^1pc/@<2ex>[rrr]_g}&&&*=0{\stackrel{\bullet}{\mbox{\tiny
 $y=\beta(g)=\alpha(h)$}}}{\ar@/^1pc/@<2ex>[rrr]_h}&&&*=0{\stackrel{\bullet}{\mbox{\tiny
 $z=\beta(h)=\beta(gh)$}}}}$$
\item An identity map $\varepsilon: M \to G$, a section of $\alpha$ and $\beta$, such that
\begin{itemize}
\item $\varepsilon(\alpha(g))g=g$ and $g\varepsilon(\beta(g))=g$.
\end{itemize}
\item An inversion map $i: G \to G$, to be denoted simply by $i(g)=g^{-1}$, such that
\begin{itemize}
\item $g^{-1}g=\varepsilon(\beta(g))$ and $gg^{-1}=\varepsilon(\alpha(g))$.
\end{itemize}
$$\xymatrix{*=0{\stackrel{\bullet}{\mbox{\tiny
 $x=\alpha(g)=\beta(g^{-1})$}}}{\ar@/^1pc/@<2ex>[rrr]_g}&&&*=0{\stackrel{\bullet}{\mbox{\tiny
 $y=\beta(g)=\alpha(g^{-1})$}}}{\ar@/^1pc/@<2ex>[lll]_{g^{-1}}}}$$

\end{itemize}

A groupoid $G$ over a set $M$ will be denoted simply by the symbol
$G \rightrightarrows M$.

The groupoid $G \rightrightarrows M$ is said to be a Lie
groupoid if $G$ and $M$ are manifolds and all the structural maps
are differentiable with $\alpha$ and $\beta$ differentiable
submersions. If $G \rightrightarrows M$ is a Lie groupoid then $m$
is a submersion, $\varepsilon$ is an immersion and $i$ is a
diffeomorphism. Moreover, if $x \in M$, $\alpha^{-1}(x)$ (resp.,
$\beta^{-1}(x)$) will be said the $\alpha$-fiber (resp.,
the $\beta$-fiber) of $x$.

Typical examples of Lie groupoids are: the pair or banal groupoid $Q \times Q$ over $Q$, a Lie group $G$ (as a Lie groupoid over a single point), the Atiyah groupoid $(Q \times Q)/G$ (over $Q/G$) associated with a free and proper action of a Lie group $G$ on $Q$ and the Lie groupoid $G\pi$ associated with a fibration $\pi: P \to M$ given by
\[
G\pi = \{(p, p') \in P \times P / \pi(p) = \pi(p') \},
\]
(it is a Lie subgroupoid of the pair groupoid $P \times P \rightrightarrows P$).    

On the other hand, if $G \rightrightarrows M$ is a Lie groupoid and $g \in G$ then the left-translation by
$g \in G$ and the right-translation by $g$ are the
diffeomorphisms
$$
\begin{array}{lll}
l_{g}: \alpha^{-1}(\beta(g)) \longrightarrow
\alpha^{-1}(\alpha(g))&; \; \;& h \longrightarrow
l_{g}(h) = gh, \\
r_{g}: \beta^{-1}(\alpha(g)) \longrightarrow
\beta^{-1}(\beta(g))&; \; \;& h \longrightarrow r_{g}(h) = hg.
\end{array}
$$
Note that $l_{g}^{-1} = l_{g^{-1}}$ and $r_{g}^{-1} = r_{g^{-1}}$.

A vector field $\tilde{X}$ on $G$ is said to be
left-invariant (resp., right-invariant) if it is
tangent to the fibers of $\alpha$ (resp., $\beta$) and
$\tilde{X}(gh) = (T_{h}l_{g})(\tilde{X}(h))$ (resp.,
$\tilde{X}(gh) = (T_{g}r_{h})(\tilde{X}(g)))$, for $(g,h) \in
G_{2}$.

Now, we will recall the definition of the Lie algebroid
associated with $G$.

We consider the vector bundle $\tau: AG \to M$, whose fiber at a
point $x \in M$ is $A_{x}G = V_{\varepsilon(x)}\alpha = Ker
(T_{\varepsilon(x)}\alpha)$. It is easy to prove that there exists a
bijection between the space $\Gamma (\tau)$ of sections of $\tau: AG \to M$ and the set of
left-invariant (resp., right-invariant) vector fields on $G$. If
$X$ is a section of $\tau: AG \to M$, the corresponding
left-invariant (resp., right-invariant) vector field on $G$ will
be denoted by $\lvec{X}$ (resp., $\rvec{X}$), where
\begin{equation}\label{linv}
\lvec{X}(g) = (T_{\varepsilon(\beta(g))}l_{g})(X(\beta(g))),
\end{equation}
\begin{equation}\label{rinv}
\rvec{X}(g) = -(T_{\varepsilon(\alpha(g))}r_{g})((T_{\varepsilon
(\alpha(g))}i)( X(\alpha(g)))),
\end{equation}
for $g \in G$. Using the above facts, one may introduce a 
bracket $\lcf\cdot , \cdot\rcf$ on the space of sections $\Gamma(AG)$ and a bundle map $\rho: AG \to TM$, which
are defined by
\begin{equation}\label{LA}
\lvec{\lcf X, Y\rcf} = [\lvec{X}, \lvec{Y}], \makebox[.3cm]{}
\rho(X)(x) = (T_{\varepsilon(x)}\beta)(X(x)),
\end{equation}
for $X, Y \in \Gamma(AG)$ and $x \in M$. 

Since $[\cdot, \cdot]$ induces a Lie algebra structure on the space of vector fields on $G$, it is easy to prove that $\lcf \cdot, \cdot \rcf$ also defines a Lie algebra structure on $\Gamma(AG)$. In addition, it follows that
\[
\lcf X, f Y\rcf = f \lcf X, Y\rcf + \rho(X)(f) Y,
\]
for $X, Y \in \Gamma(AG)$ and $f \in C^{\infty}(M)$.

In other words, we have a Lie algebroid structure on the vector bundle $\tau: AG \to M$ with anchor map $\rho$.
It is the Lie algebroid of $G$.

We remark the following facts:
\begin{itemize} 
\item
The Lie algebroid of the pair groupoid $Q \times Q$ over $Q$ is the standard Lie algebroid $TQ \to Q$;

\item
The Lie algebroid of a Lie group $G$ is the Lie algebra ${\mathfrak g}$ of $G$;

\item
The Lie algebroid of the Atiyah groupoid $(Q \times Q)/G$ is the Atiyah algebroid $TQ/G$ over $Q/G$ and

\item
The Lie algebroid of the Lie groupoid $G\pi$ associated with a fibration $\pi: P \to M$ is the vertical bundle $V\pi$ of $\pi: P \to M$.  
\end{itemize}

\end{document}